\tikzset{every node/.style={circle,fill, inner sep=0pt, minimum size=1.5mm}}
\newcounter{tbox}
\newcommand{\tbox}[1]{\vspace*{0.3cm}\refstepcounter{tbox}\noindent{ \parbox{\textwidth}{(\thetbox) \emph{#1}}}\vspace*{0.3cm}}
\newtheorem{theorem}{Theorem}[section]
\newtheorem{lemma}[theorem]{Lemma}
\title{The $r$-coloring and maximum stable set problem in hypergraphs with bounded matching number and edge size}
\author{Yanjia Li\thanks{Current address: Georgia Institute of Technology, Atlanta, GA.} \, and Sophie Spirkl\thanks{We acknowledge the support of the Natural Sciences and Engineering Research Council of Canada (NSERC), [funding
reference number RGPIN-2020-03912]. \newline Cette recherche a \'et\'e financ\'ee par le Conseil de recherches en sciences naturelles et en g\'enie du Canada (CRSNG),
[num\'ero de r\'ef\'erence RGPIN-2020-03912].}\\University of Waterloo, Waterloo, ON N2L 3G1, Canada}
\begin{document}
\maketitle

\begin{abstract}
Motivated by the analogous questions in graphs, we study the complexity of coloring and stable set problems in hypergraphs with forbidden substructures and bounded edge size. Letting $\nu(G)$ denote the maximum size of a matching in $H$, we obtain complete dichotomies for the complexity of the following problems parametrized by fixed $r, k, s \in \mathbb{N}$: 
\begin{itemize}
    \item \textsc{$r$-Coloring} in hypergraphs $G$ with edge size at most $k$ and $\nu(G) \leq s$;
    \item \textsc{$r$-Precoloring Extension} in $k$-uniform hypergraphs $G$ with $\nu(G) \leq s$;
    \item \textsc{$r$-Precoloring Extension} in hypergraphs $G$ with edge size at most $k$ and $\nu(G) \leq s$;
    \item \textsc{Maximum Stable Set} in $k$-uniform hypergraphs $G$ with $\nu(G) \leq s$;
    \item \textsc{Maximum Weight Stable Set} in $k$-uniform hypergraphs with $\nu(G) \leq s$;
\end{itemize}
as well as partial results for \textsc{$r$-Coloring} in $k$-uniform hypergraphs $\nu(G) \leq s$. We then turn our attention to \textsc{$2$-Coloring} in 3-uniform hypergraphs with forbidden induced subhypergraphs, and give a polynomial-time algorithm when restricting the input to hypergraphs excluding a fixed one-edge hypergraph. Finally, we consider linear 3-uniform hypergraphs (in which every two edges share at most one vertex), and show that excluding an induced matching in $G$ implies that $\nu(G)$ is bounded by a constant; and that $3$-coloring linear $3$-uniform hypergraphs $G$ with $\nu(G) \leq 532$ is NP-hard.
\end{abstract}

\section{Preliminaries}
	A \emph{hypergraph} $G$ is a pair $(V,E)$ where $V$ is a finite set, and $E\subseteq 2^{V}\setminus \{\emptyset\}$. $V$ is called the set of \emph{vertices} and $E$ is called the set of \emph{edges}. For a hypergraph $G=(V,E)$, we define $V(G)=V$ and $E(G)=E$. For $k\in \mathbb{N}$, we say that $G$ is \emph{$k$-uniform} if $|e|=k$ for all edges $e\in E$, and $G$ is \emph{$k$-bounded} if $|e|\leq k$ for all edges $e\in E$. A 2-uniform hypergraph is simply called a graph.
	
	Given a set $X\subseteq V(G)$, $G[X]=(X,\{e\in E(G):e\subseteq X\})$ is an \emph{induced hypersubgraph} of $G$. A \emph{matching} of $G$ is a set of pairwise disjoint edges. A \emph{maximal matching} of $G$ is a matching which is maximal with respect to inclusion. For a hypergraph $G$, we denote $\nu (G)$ be the maximum integer $s$ such that $G$ contains a matching of size $s$. A set $S\subseteq V(G)$ of $G$ is \emph{stable} if $e\cap S\neq e$ for every $e\in E(G)$. 
	
	We use $[r]$ to denote the set $\{1, \dots, r\}$. Given a hypergraph $G$ and a positive integer $r$, a function $c:V(G)\rightarrow [r]$ is an \emph{$r$-coloring} of $G$ if for all $i\in [r]$, $c^{-1}(i)$ is a stable set in $G$. $G$ is \emph{$r$-colorable} if there exists an $r$-coloring of $G$. The \emph{chromatic number} of $G$, denoted $\chi(G)$, is the minimum integer $r$ such that $G$ is $r$-colorable.
	
	A function $c:X\rightarrow [r]$ for some $X\subseteq V(G)$ is a \emph{partial $r$-coloring} of $G$ if $c$ is an $r$-coloring of $G[X]$. For convenience, we also denote a partial coloring as $(X,c)$. Given a partial $r$-coloring $(X,c)$ of $G$, an \emph{$r$-precoloring extension} of $(X,c)$ is a partial $r$-coloring $(X',c')$ with $c'(v)=c(v)$ for all $v\in X$, and $X\subset X'$. We say that a partial coloring $(X,c)$ \emph{$r$-extends} to $G$ if there is an $r$-precoloring extension $(V(G),c')$ of $(X,c)$.
	
	For a fixed integer $r$, the \textsc{Hypergraph $r$-Coloring Problem} is to decide whether a given hypergraph $G$ is $r$-colorable, and the \textsc{Hypergraph $r$-Precoloring Extension Problem} is to decide given a hypergraph $G$ and a partial $r$-coloring $(X,c)$, whether $(X, c)$ $r$-extends to $G$.
	
	The \textsc{Graph $r$-Coloring Problem} is a well-known NP-hard problem: 
	
	\begin{theorem} [Karp \cite{Karp}] \label{rColNP}
		For every fixed integer $r$ with $r\geq 3$, the \textsc{Graph $r$-Coloring Problem} is NP-complete.
	\end{theorem}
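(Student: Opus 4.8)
The plan is to prove the two halves of NP-completeness separately: membership in NP, and NP-hardness; this is essentially Karp's original argument.

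Membership in NP is immediate: a function $c:V(G)\to[r]$ serves as a certificate of polynomial size, and one checks in polynomial time that $c(u)\neq c(v)$ for every edge $uv\in E(G)$, i.e.\ that $c$ is a proper $r$-coloring.

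For NP-hardness I would first settle the base case $r=3$ by a polynomial reduction from \textsc{3-Satisfiability}, which is NP-complete. Given a 3-CNF formula $\varphi$ with variables $x_1,\dots,x_n$ and clauses $C_1,\dots,C_m$, construct a graph $G_\varphi$ as follows. Introduce a ``palette triangle'' on three pairwise-adjacent vertices $T,F,B$, whose colors we interpret as ``true'', ``false'', ``base''. For each variable $x_i$ add vertices $v_i,\bar v_i$, make them adjacent to each other and both adjacent to $B$; in any proper $3$-coloring this forces $\{c(v_i),c(\bar v_i)\}=\{c(T),c(F)\}$, so a $3$-coloring of this part encodes a truth assignment. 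For each clause $C_j=(\ell_{j,1}\vee\ell_{j,2}\vee\ell_{j,3})$ attach the standard clause gadget: a fixed small graph whose three designated input vertices are identified with the literal vertices of $C_j$ and one of whose vertices is adjacent to $F$ and $B$, with the property that it admits a proper $3$-coloring compatible with the palette precisely when at least one input vertex receives color $c(T)$. One then verifies that $G_\varphi$ is $3$-colorable if and only if $\varphi$ is satisfiable, and that $G_\varphi$ has size polynomial in $|\varphi|$ and is computable in polynomial time.

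For $r>3$ I would reduce from the $3$-coloring problem just shown hard: given a graph $G$, form $G'$ by adding $r-3$ new vertices that are pairwise adjacent and each adjacent to every vertex of $G$. In any $r$-coloring of $G'$ the new vertices occupy $r-3$ distinct colors, none of which can appear on $V(G)$, so $G[V(G)]$ is properly colored with the remaining $3$ colors; conversely any proper $3$-coloring of $G$ extends to such an $r$-coloring of $G'$. Hence $G'$ is $r$-colorable iff $G$ is $3$-colorable, and $G'$ is built in polynomial time. The only genuinely delicate point is verifying the clause gadget in the $r=3$ reduction — that its $3$-colorability is equivalent to the ``not all literals false'' condition — which is a finite case analysis on a fixed graph; membership in NP, the clique-padding step for $r>3$, and the polynomial-time bounds are all routine.
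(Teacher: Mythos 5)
This statement is not proved in the paper at all: it is quoted as a known result of Karp and used as a black box, so there is no ``paper proof'' to compare against. Your argument is the standard textbook proof and is essentially correct: membership in NP via the coloring as certificate, hardness of $3$-coloring by reduction from \textsc{3-Satisfiability} with a palette triangle, variable gadgets forced onto the true/false colors, and per-clause OR-gadgets, followed by the clique-padding step (adding an $(r-3)$-clique joined to everything) to lift hardness from $r=3$ to all $r\geq 3$. The clique-padding step and the NP-membership argument are airtight as written. The one place where your write-up falls short of a complete proof is exactly the point you flag yourself: you assert the existence of a ``standard clause gadget'' with the required interface (three literal inputs, one output vertex tied to $F$ and $B$, $3$-colorable compatibly with the palette iff some input gets the true color) but never exhibit it. Since the entire reduction rests on this gadget, a full proof must display the fixed graph (e.g.\ the usual six-vertex OR-gadget built from two three-vertex ``half-adders'') and carry out the finite case check. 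With that gadget written down, the proof is complete; as a historical note, Karp's own reduction was for \textsc{Chromatic Number} and the hardness of $3$-coloring specifically is usually credited to Stockmeyer and to Garey, Johnson and Stockmeyer, but the argument you give is the now-standard one.
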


	However, placing structural restrictions on the input graphs may make the problem easier. This is well-studied for graphs; see \cite{Col-rP2} for a survey of complexity results for coloring problems in graphs with forbidden induced subgraphs. Few graphs $H$ have the property that $r$-coloring $H$-free graphs can be solved in polynomial time for all $r$. The following result shows that graphs of the form $H = sP_2$ (that is, matchings) have this property: 
	\begin{theorem} [Dabrowski, Lozin, Raman, Ries \cite{dabrowski2012colouring} based on results of Balas and Yu \cite{MaxStb-IndMatching} and Tsukiyama, Ide, Ariyoshi, Shirakawa \cite{MaxStb-EnumMatching}; explicitly stated in Golovach, Johnson, Paulusma and Song \cite{Col-rP2}] \label{rP2}
		For fixed positive integers $r$ and $s$, the \textsc{Graph $r$-Coloring Problem} restricted to $sP_2$-free graphs is polynomial-time solvable.
	\end{theorem}

	The hypergraph coloring problem is a natural extension of the graph coloring problem; see the survey \cite{HpgColSurvey}. The following result shows that the problem is NP-hard, even in uniform hypergraphs. 
	\begin{theorem} [
	Phelps and R\"odl \cite{Linear&gadget}] \label{3Unf2ColNP}
        For all $k \geq 3$ and $r \geq 2$, the \textsc{$k$-Uniform Hypergraph $r$-Coloring Problem} is NP-complete.
	\end{theorem}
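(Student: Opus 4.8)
First, the problem lies in NP: given a map $c\colon V(G)\to[r]$ one checks in polynomial time whether some edge of $G$ is monochromatic, and $c$ is an $r$-coloring exactly when none is. For NP-hardness the plan is to split on the value of $r$, obtain hardness for a convenient small edge size in each case, and then raise the uniformity by a single generic reduction. For $r\ge 3$ the starting point is Theorem~\ref{rColNP}: recall that a graph is precisely a $2$-uniform hypergraph, that the two notions of proper $r$-coloring coincide, and that \textsc{Graph $r$-Coloring} is NP-hard. For $r=2$ this is useless (graph $2$-coloring is bipartiteness), so instead I would invoke Lov\'asz's theorem~\cite{3Unif2Col} (see also~\cite{Linear}) that \textsc{$3$-Uniform Hypergraph $2$-Coloring} is NP-hard, used as a black box. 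In both cases it then remains to pass from a source instance of edge size at most $k$ to a $k$-uniform instance.

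That passage is the main step: a polynomial reduction making a bounded-edge-size hypergraph $k$-uniform while preserving $r$-colorability. Given a hypergraph $G$ all of whose edges have size at most $k$, form a $k$-uniform $G'$ by adding a vertex set $W$ disjoint from $V(G)$ with $|W|=r(k-1)$, inserting every $k$-element subset of $W$ as an edge, keeping every edge of $G$ of size exactly $k$, and, for each edge $e\in E(G)$ with $|e|=j<k$, inserting the edge $e\cup S$ for every $S\in\binom{W}{k-j}$. For fixed $k$ and $r$ this is polynomial in the size of $G$. The edges inside $W$ force the following: in any $r$-coloring of $G'$ no color class contains a $k$-subset of $W$, so each color occupies at most $k-1$, hence (as $|W|=r(k-1)$) exactly $k-1$, vertices of $W$.

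With this, both directions are routine. An $r$-coloring of $G$ extends: split $W$ into $r$ blocks of size $k-1$ and give block $i$ color $i$; then every $k$-subset of $W$ meets two blocks, every size-$k$ edge of $G$ is already non-monochromatic, and every $e\cup S$ contains the non-monochromatic set $e$. Conversely, the restriction to $V(G)$ of an $r$-coloring of $G'$ is an $r$-coloring of $G$: size-$k$ edges of $G$ are edges of $G'$, and if some edge $e$ with $|e|=j<k$ were monochromatic in color $i$, then, $W$ having $k-1\ge k-j$ vertices of color $i$, some $S\in\binom{W}{k-j}$ lies entirely in color $i$ and makes $e\cup S$ a monochromatic edge of $G'$. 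Applying this to graphs (as $2$-uniform hypergraphs) gives \textsc{Graph $r$-Coloring} $\le_p$ \textsc{$k$-Uniform Hypergraph $r$-Coloring} for all $r\ge 3$, $k\ge 3$; applying it to $3$-uniform hypergraphs gives \textsc{$3$-Uniform Hypergraph $2$-Coloring} $\le_p$ \textsc{$k$-Uniform Hypergraph $2$-Coloring} for all $k\ge 3$; together with Theorem~\ref{rColNP} and~\cite{3Unif2Col} this yields NP-hardness for every $k\ge 3$ and $r\ge 2$.

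The one genuinely hard ingredient is the base case $(k,r)=(3,2)$, i.e.\ Lov\'asz's theorem, which I would treat as a black box. It does not follow from Theorem~\ref{rColNP}: at $r=2$ that theorem says nothing, and there is no evident way to reduce graph $r$-coloring with $r\ge3$ into hypergraph $2$-coloring ---  hyperedges encode a ``not all equal'' constraint rather than a ``not equal'' one, and $2$-coloring $2$-uniform hypergraphs is in P while $2$-coloring $3$-uniform hypergraphs is NP-hard --- so a self-contained argument (Lov\'asz's, which I do not reproduce) is needed. The remaining pieces --- the NP upper bound, the identification of graphs with $2$-uniform hypergraphs, and the uniformity-raising reduction --- are straightforward; the only delicate point there is the choice $|W|=r(k-1)$, which at once forces each color onto exactly $k-1$ vertices of $W$ (used for the ``only if'' direction) and still admits a proper coloring of $W$ (used for the ``if'' direction).
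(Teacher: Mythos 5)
Your argument is sound, but note that the paper offers no proof of Theorem \ref{3Unf2ColNP} at all: it is imported wholesale from Lov\'asz and Phelps--R\"odl as a citation. What you have actually reconstructed is, in effect, the paper's own Theorem \ref{kUnfrColNP}, which re-derives the same conclusion (for all $k+r\ge 5$) from exactly the two minimal base cases you identify --- Theorem \ref{rColNP} for $k=2$, $r\ge 3$, and the Lov\'asz $(3,2)$ case as an irreducible black box --- and you are right that the latter cannot be bypassed, for precisely the reason you give (a hyperedge encodes ``not all equal,'' not ``all distinct''). Where you genuinely diverge is the uniformity-raising reduction. The paper increments $k$ one step at a time via the $\ltimes$ operation of Section \ref{Sec-NP}: every edge of the source hypergraph is attached to every vertex of a fixed complete $(k+1)$-uniform hypergraph on $(r-1)k+1$ vertices of chromatic number exactly $r$, so that any $r$-coloring yields a rainbow $r$-set in the gadget, one member of which completes any monochromatic source edge to a monochromatic target edge. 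Your construction instead pads in one shot: a set $W$ of $r(k-1)$ new vertices carrying all of its $k$-subsets as edges pins each color class to exactly $k-1$ vertices of $W$, and hence supplies, for every color and every deficiency $k-j$ with $j\ge 1$, a monochromatic completion of a short edge. Both reductions are correct and polynomial for fixed $k,r$. The paper's version buys the matching-number bound $\nu\le (r-1)k+1$ (every new edge meets the small gadget), which is the entire point of Theorems \ref{NP-Bdd} and \ref{NP-Unf} and is needed for the dichotomies of Theorems \ref{NP-Dic} and \ref{NP-Gap}; your version handles a non-uniform ($k$-bounded) source in a single step, and would in fact also give $\nu\le r(k-1)$ whenever every source edge has size strictly less than $k$, though you neither need nor claim that here.
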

	
    It is natural to ask which restrictions of the input hypergraphs make the \textsc{Hypergraph $r$-Coloring Problem} polynomial-time solvable.	In this paper, we mainly focus on bounded or uniform hypergraphs. In view of Theorem \ref{rP2}, we are particularly interested in excluding a matching; but as it turns out, even bounding the maximum size of a matching (a much stronger condition than excluding a large matching as an induced sub(hyper)graph, as is the case in Theorem \ref{rP2}) does not always lead to a polynomial-time algorithm. We prove the following dichotomies:
	\begin{theorem}\label{NP-Dic}
		Let $k,r$ and $s$ be positive integers with $k,r\geq 2$. The \textsc{$k$-Bounded Hypergraph $r$-Coloring Problem}, the \textsc{$k$-Bounded Hypergraph $r$-Precoloring Extension Problem} as well as the \textsc{$k$-Uniform Hypergraph $r$-Precoloring Extension Problem}, restricted to hypergraphs with $\nu(G)\leq s$, are polynomial-time solvable if 
		\begin{itemize}
			\item $s\leq r-1$, or
			\item $k= 3$ and $r=2$, or
			\item $k=2$,
		\end{itemize}
		and NP-complete otherwise.
	\end{theorem}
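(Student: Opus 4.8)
The plan is to treat the two directions separately. For the polynomial direction it suffices to handle the \textsc{$k$-Bounded Hypergraph $r$-Precoloring Extension Problem}: the \textsc{$k$-Uniform Hypergraph $r$-Precoloring Extension Problem} is its restriction to uniform inputs, and the \textsc{$k$-Bounded Hypergraph $r$-Coloring Problem} is the special case of an empty precoloring. Symmetrically, for NP-hardness it is enough to show that the \textsc{$k$-Bounded $r$-Coloring Problem} and the \textsc{$k$-Uniform $r$-Precoloring Extension Problem} are NP-hard in the whole region $s\ge r\ge 2$, $k\ge 3$, $(k,r)\neq(3,2)$.

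For the polynomial direction, the case $k=2$ is immediate: $G$ is essentially a graph and $\nu(G)\le s$ means $G$ contains no matching of size $s+1$, hence is $(s+1)P_2$-free, so Theorem~\ref{rP2} applies (and the maximal-independent-set enumeration underlying it also yields precoloring extension). In the other two cases I would fix a \emph{maximum} matching $M$; as $G$ is $k$-bounded, $|V(M)|\le ks$ is bounded, the set $U:=V(G)\setminus V(M)$ is stable, and every edge of $G$ meets $V(M)$. Branching over the at most $r^{ks}$ colorings $c_0$ of $V(M)$ that agree with the given precoloring, it remains for each such $c_0$ (rejecting it if some fully colored edge is monochromatic) to decide whether $U$ can be colored so that every edge $e$ whose colored vertices are all of a single color $\gamma_e$ has $e\cap U$ not entirely colored $\gamma_e$. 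When $k=3$ and $r=2$, each such constraint involves at most $|e\cap U|\le 2$ Boolean variables, so the residual instance is a $2$-SAT instance and is solved in polynomial time. When $s\le r-1$, the edges imposing a constraint form a subhypergraph of matching number at most $\nu(G)\le r-1$, and using this together with the maximality of $M$ one shows the residual instance can be decided in polynomial time. I expect this last step to be the main obstacle: one must show that, after the bounded guess on $V(M)$, the monochromatic-avoidance constraints on the stable set $U$ can always be satisfied unless a bounded-size obstruction occurs --- they cannot always be satisfied, as already for $r=2$, $k=4$, $s=1$ the hypergraph with edges $\{w_1,w_2\}$, $\{w_1,a,b,c\}$, $\{w_2,a\}$, $\{w_2,b\}$, $\{w_2,c\}$ has $\nu=1$ but is not $2$-colorable.

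For NP-hardness, membership in NP is clear. For the \textsc{$k$-Bounded $r$-Coloring Problem} with $r\ge 3$ (so $s\ge r\ge 3$), I would reduce from the \textsc{Graph $r$-Coloring Problem} (Theorem~\ref{rColNP}): given a graph $G_0$, adjoin $r$ new vertices $z_1,\dots,z_r$ together with all $\binom{r}{2}$ two-element edges among them, so that in every $r$-coloring the $z_i$ receive pairwise distinct colors and, after relabeling, $z_i$ has color $i$; then for each $uv\in E(G_0)$ and each $\gamma\in[r]$ add the hyperedge $\{u,v,z_\gamma\}$, which is monochromatic precisely when $c(u)=c(v)=\gamma$. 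The result is a $3$-bounded (hence $k$-bounded) hypergraph that is $r$-colorable iff $G_0$ is, and since every hyperedge meets $\{z_1,\dots,z_r\}$ its matching number is at most $r\le s$. For $r=2$ (so $k\ge 4$, $s\ge 2$) I would instead reduce from the \textsc{$3$-Uniform $2$-Coloring Problem} (Theorem~\ref{3Unf2ColNP}): adjoin two vertices $z_1,z_2$ with the edge $\{z_1,z_2\}$ (so that, after relabeling, $c(z_1)=1$, $c(z_2)=2$) and replace each edge $e$ by the two $4$-edges $e\cup\{z_1\}$ and $e\cup\{z_2\}$, whose simultaneous non-monochromaticity is equivalent to $e$ being non-monochromatic; again every new edge meets $\{z_1,z_2\}$, so the matching number is at most $2\le s$. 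Finally, for the \textsc{$k$-Uniform $r$-Precoloring Extension Problem} I would run the same constructions but precolor each reference vertex $z_\gamma$ with color $\gamma$ (and drop the clique, respectively the edge $\{z_1,z_2\}$), and then pad every hyperedge up to size exactly $k$ using vertices from a bounded shared pool of dummy vertices precolored to the color that leaves the intended constraint unchanged; verifying that this padding keeps the matching number at most $s$ is routine.
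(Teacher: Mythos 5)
Your treatment of two of the three polynomial cases and of the hardness direction is sound and close to the paper's. For $k=2$ the paper also invokes Theorem~\ref{rP2}; for $k=3$, $r=2$ the paper likewise guesses all colorings of the vertex set of a maximal matching and reduces the residual problem to \textsc{2-SAT} (Theorem~\ref{3Bdd2Col}); and your hardness gadgets are exactly the paper's $\ltimes$ construction in disguise (a small ``rainbow-forcing'' gadget $G$ -- a clique $K_r$, a single edge, or $r$ precolored isolated vertices -- with each original edge extended into it, so that every new edge meets $V(G)$ and hence $\nu \leq r \leq s$; see Lemmas~\ref{BddNu}--\ref{Chi} and Theorems~\ref{NP-Bdd}, \ref{thm:precoloring-ext}, \ref{kUnfrColNP}). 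Your padding of the uniform precoloring gadget with precolored dummies shared across edges does check out.

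The genuine gap is the case $s\leq r-1$, which you explicitly leave open and which is where the paper's main technical work lies. A single round of guessing on a maximum matching $M$ does not suffice: after fixing a coloring of $V(M)$, the ``dangerous'' edges (those whose colored vertices all carry one color $\gamma_e$) may realize all $r$ values of $\gamma_e$ simultaneously (already for $r=3$, $s=2$ the six vertices of $M$ can use all three colors), so no single color is safe for the stable remainder $U$, and the residual constraint-satisfaction problem on $U$ has no evident polynomial structure for $r\geq 3$. The paper's resolution (Lemma~\ref{GuessS} plus the potential-function argument in Section~\ref{Sec-kBddrCol-sBdd}) is iterative: at each step one takes a maximal matching $S$ \emph{among the currently dangerous edges only} (not among all edges), guesses all $r^{ks}$ colorings of $\bigcup_{e\in S}e$, and uses the pigeonhole bound $|S|\leq s\leq r-1$ to find a color $j$ absent from the previously colored part of $\bigcup_{e\in S}e$; maximality of $S$ then forces every $j$-dangerous edge to acquire at least one newly colored vertex, so the potential $\psi=\sum_i\max_e|e\setminus Y|$ drops, and after at most $rk$ rounds some color class has no dangerous edges left and can absorb all uncolored vertices. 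Without this iteration (or an equivalent idea) your argument for $s\leq r-1$ does not go through, so the proposal as written does not establish the first polynomial case of the dichotomy.
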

	
	We also show the following result:
	\begin{theorem} \label{NP-Gap}
			Let $k,r$ and $s$ be positive integers with $k,r\geq 2$. The \textsc{$k$-Uniform Hypergraph $r$-Coloring Problem} restricted to hypergraphs with $\nu(G)\leq s$ is polynomial-time solvable if
		\begin{itemize}
			\item $s\leq r-1$, or
			\item $k=3$ and $r=2$, or
			\item $k=2$,
		\end{itemize}
		and is NP-complete if
		\begin{itemize}
			\item $s\geq (r-1)k+1$, and
			\item $k\geq 4$ or $r\geq 3$.
		\end{itemize}  
	\end{theorem}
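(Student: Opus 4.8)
The polynomial-time half requires no new work: every $k$-uniform hypergraph is $k$-bounded, and \textsc{$r$-Coloring} is the special case of \textsc{$r$-Precoloring Extension} in which the precoloring is empty, so the three polynomial-time cases follow immediately from the polynomial-time half of Theorem~\ref{NP-Dic}. For the NP-completeness half, membership in NP is clear, so the task is to give a polynomial reduction proving hardness. Since $k=2$ is already covered by the polynomial-time half, I may assume $k\ge 3$, and then the hypothesis ``$k\ge 4$ or $r\ge 3$'' splits into the cases $k\ge 4$ (any $r\ge 2$) and $k=3$ with $r\ge 3$. The plan is to reduce from the \textsc{$(k-1)$-Uniform Hypergraph $r$-Coloring Problem}, which is NP-complete in exactly these cases: by Theorem~\ref{3Unf2ColNP} when $k\ge 4$ (so $k-1\ge 3$), and by Theorem~\ref{rColNP} when $k=3$ and $r\ge 3$ (so $k-1=2$).

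The engine of the reduction is a fixed \emph{palette gadget} $P=P_{k,r}$, of size depending only on $k$ and $r$, with $r$ distinguished vertices $t_1,\dots,t_r$ such that (i) $P$ is $r$-colorable, and (ii) in every $r$-coloring of $P$ the colors of $t_1,\dots,t_r$ are pairwise distinct. I will build $P$ on a vertex set $\{t_1,\dots,t_r\}\cup A$ where $|A|=M:=r(k-1)-1$; its edges are all $k$-element subsets of $A$ together with all sets $\{t_\gamma,t_{\gamma'}\}\cup S$ with $\gamma\ne\gamma'$ in $[r]$ and $S$ a $(k-2)$-element subset of $A$. Property (i) holds because the complete $k$-uniform hypergraph on $M=r(k-1)-1$ vertices has chromatic number $r$, so one may properly $r$-color $A$ and set $c(t_\gamma)=\gamma$, making every edge $\{t_\gamma,t_{\gamma'}\}\cup S$ non-monochromatic. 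For property (ii): if some $r$-coloring of $P$ had $c(t_\gamma)=c(t_{\gamma'})=\delta$, then for every such $S$ the edge $\{t_\gamma,t_{\gamma'}\}\cup S$ is non-monochromatic, which forces no $(k-2)$-subset of $A$ to be monochromatic in color $\delta$, i.e.\ color $\delta$ is used at most $k-3$ times on $A$; but the complete hypergraph on $A$ forces each color to be used at most $k-1$ times on $A$, so $M=|A|\le(k-3)+(r-1)(k-1)=M-1$, a contradiction.

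Given an instance $H$ of \textsc{$(k-1)$-Uniform Hypergraph $r$-Coloring}, the reduction outputs the $k$-uniform hypergraph $G'$ formed from the disjoint union of $P$ with a copy of $V(H)$ by adding, for every $e\in E(H)$ and every $\gamma\in[r]$, the $k$-element edge $e\cup\{t_\gamma\}$. If $H$ is $r$-colorable, I extend an $r$-coloring of $H$ by coloring $P$ as in (i) (so $c(t_\gamma)=\gamma$): every added edge $e\cup\{t_\gamma\}$ is non-monochromatic since $e$ already is, so $G'$ is $r$-colorable. Conversely, an $r$-coloring $c$ of $G'$ restricts to an $r$-coloring of $P$, so by (ii) the colors $c(t_1),\dots,c(t_r)$ exhaust $[r]$; then for each $e\in E(H)$, letting $\gamma$ range over $[r]$ in the non-monochromatic edges $e\cup\{t_\gamma\}$ shows $e$ is monochromatic in no color, so $c$ restricted to $V(H)$ is a proper $r$-coloring of $H$. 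It remains to check $\nu(G')\le s$: in any matching of $G'$, at most $\lfloor M/k\rfloor$ edges are contained in $A$, and every other edge meets $\{t_1,\dots,t_r\}$, so there are at most $r$ of them; hence $\nu(G')\le\lfloor M/k\rfloor+r\le (r-1)+r=2r-1\le (r-1)k+1\le s$, the last inequalities using $k\ge 3$. Since $G'$ is constructible in polynomial time, this finishes the reduction.

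I expect the palette gadget to be the delicate point. The obvious construction---forcing $t_i\ne t_j$ for each pair $\{i,j\}$ separately, using a private copy of a $k$-uniform hypergraph that is $r$-colorable but not $(r-1)$-colorable---pushes $\nu$ up to roughly $\binom{r}{2}(r-1)$, which for small $k$ badly exceeds the allowed bound $(r-1)k+1$. The point of the construction above is to have all pairs share a \emph{single} complete $k$-uniform hypergraph on exactly $r(k-1)-1$ vertices: this is one vertex short of the threshold $r(k-1)$ at which a complete $k$-uniform hypergraph ceases to be $r$-colorable, which is precisely the slack that turns ``color $\delta$ is underused on $A$'' into a counting contradiction while still letting $P$ itself be $r$-colorable; arranging both of these simultaneously, and then verifying the matching bound, is the crux. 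A further point to keep in mind is that one cannot force a vertex to any \emph{specific} color (colors are interchangeable), which is why the reduction must---and does---only use the fact that $\{c(t_1),\dots,c(t_r)\}$ is all of $[r]$, not the individual values.
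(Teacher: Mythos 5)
Your argument is correct, and its skeleton coincides with the paper's: reduce the \textsc{$(k-1)$-Uniform Hypergraph $r$-Coloring Problem} (NP-complete in exactly your two cases, by Theorems \ref{3Unf2ColNP} and \ref{rColNP}) to the $k$-uniform problem by attaching each source hyperedge to vertices of a constant-size gadget on which every $r$-coloring must display all $r$ colors at the attachment points. The difference is the gadget. The paper (Theorem \ref{NP-Unf}, via the operation $G \ltimes H$ and Lemma \ref{Chi}) takes $G$ to be the complete $k$-uniform hypergraph on $(r-1)(k-1)+1$ vertices, notes $\chi(G)=r$, and attaches each edge of $H$ to \emph{every} vertex of $G$; correctness is then immediate, since any $r$-coloring of a hypergraph of chromatic number $r$ uses all $r$ colors on it, and $\nu(G\ltimes H)\le |V(G)|=(r-1)(k-1)+1\le (r-1)k+1$. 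Your palette gadget --- the $r$ distinguished rainbow vertices $t_1,\dots,t_r$ over a complete $k$-uniform hypergraph on $r(k-1)-1$ vertices, with the counting argument on $(k-2)$-subsets of $A$ --- checks out, including the boundary case $k=3$ where color $\delta$ is used $k-3=0$ times on $A$, and it yields the marginally stronger bound $\nu(G')\le 2r-1$; but this extra precision is not needed for the theorem as stated, and the paper's version of the gadget makes the ``crux'' you identify disappear entirely. The polynomial-time half is handled the same way in both (and you are right to read the hardness conditions as implicitly excluding $k=2$, where the two lists of conditions in the statement formally overlap).
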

	
	Theorem \ref{rP2} is based on a result of \cite{MaxStb-IndMatching} that $sP_2$-free graphs have only polynomially many maximal (with respect to inclusion) stable sets. Using this, \cite{MaxStb-IndMatching} gave a polynomial-time algorithm for finding a maximum (weight) stable set in an $sP_2$-free graph. We ask an analogous question in hypergraphs with bounded maximum matching size, and prove: 
	\begin{theorem} \label{stable-set}
		For fixed positive integers $k$ and $s$ with $k\geq 3$, the \textsc{$k$-Uniform Hypergraph Maximum Stable Set Problem} restricted to hypergraphs with $\nu(G)\leq s$ is polynomial-time solvable, and the \textsc{$k$-Uniform Hypergraph Maximum Weight Stable Set Problem} restricted to hypergraphs with $\nu(G)\leq s$ is NP-complete.
	\end{theorem}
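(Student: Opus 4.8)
\emph{Proof plan.} I would prove the two assertions separately.

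\textbf{Maximum stable set is polynomial.} The plan is to reduce to the minimum transversal problem. A set $S\subseteq V(G)$ is stable precisely when $V(G)\setminus S$ meets every edge, i.e.\ is a \emph{transversal}, so a maximum stable set is the complement of a minimum transversal and it suffices to compute $\tau(G)$, the least size of a transversal. The crucial point is that $\tau(G)$ is small: if $M$ is a maximal matching of $G$ then $V(M)$ is a transversal (an edge disjoint from $V(M)$ could be added to $M$), whence $\tau(G)\le k\,|M|\le k\,\nu(G)\le ks$. Since $k,s$ are fixed, I would then compute a maximal matching greedily and brute-force over all $n^{O(ks)}$ vertex sets of size at most $ks$, checking each for being a transversal and keeping the smallest; its complement is a maximum stable set. (Branching on the vertices of an uncovered edge, to depth $ks$, does the same thing faster.) Once the bound $\tau(G)\le k\nu(G)$ is noted this part is routine.

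\textbf{Maximum weight stable set is NP-hard.} I would reduce from the problem of finding a maximum stable set in a graph, which is NP-hard \cite{Karp}. Given a graph $H=(V_H,E_H)$, build a $k$-uniform hypergraph $G$ with a new ``hub'' vertex $h$, the old vertex set $V_H$, and, for every edge $f=\{u,v\}\in E_H$, a private set $P_f$ of $k-3$ fresh ``padding'' vertices; the hyperedges are the sets $\{h,u,v\}\cup P_f$ over $f=\{u,v\}\in E_H$. Then $G$ is $k$-uniform and every hyperedge contains $h$, so $\nu(G)\le 1\le s$. Give weight $1$ to each vertex of $V_H$ and each padding vertex and the large weight $w(h)=|V_H|+1$ to $h$. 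I would then show that the maximum weight of a stable set of $G$ equals $w(h)+(k-3)|E_H|+\alpha(H)$, where $\alpha(H)$ denotes the maximum size of a stable set of $H$; as the first two summands are known this decides the graph problem, and membership in NP is clear. (Using weights is unavoidable: by the first part the unweighted problem is polynomial, and for $k=2$ even the weighted problem is polynomial, since $\nu(G)\le s$ forces $G$ to be $(s+1)P_2$-free and maximum weight stable set is polynomial on such graphs \cite{MaxStb-IndMatching} --- which is why $k\ge 3$ is assumed.)

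\textbf{The identity for the optimum} is the heart of the matter. For ``$\ge$'': if $I$ is a maximum stable set of $H$ then $\{h\}\cup I\cup\bigcup_f P_f$ is stable --- no hyperedge $\{h,u,v\}\cup P_f$ sits inside it, since $I$ omits an endpoint of $f$ --- and has the claimed weight. For ``$\le$'' I would split on $h$: if $h$ is not in the stable set $S$ then $w(S)\le |V_H|+(k-3)|E_H|$, which is less because $w(h)>|V_H|$; if $h\in S$, then for each edge $f$ with $P_f\subseteq S$ the two ends of $f$ are not both in $S$, so $S\cap V_H$ is a stable set of $H$ after deleting the edges whose padding is not wholly contained in $S$, and balancing the resulting increase in $\alpha$ of the remaining graph against the padding vertices one is forced to omit yields the bound. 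I expect the only real obstacle to be getting this calibration right: one must check that unit weights on the padding vertices just suffice (the ``$+1$'' per deleted edge in the size of a maximum stable set is exactly cancelled by the ``$-1$'' per omitted padding vertex), while the large weight on $h$ is genuinely needed, since with uniform weights the trivial stable set $V(G)\setminus\{h\}$ would be optimal and the instance would say nothing about $H$. The remaining verifications --- $k$-uniformity, $\nu(G)\le 1$, and polynomiality of the construction and the weights --- are immediate.
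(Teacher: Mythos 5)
Your proposal is correct. The first half is essentially identical to the paper's argument: the paper also observes that the $ks$ vertices of a maximal matching form a transversal, so a maximum stable set misses at most $ks$ vertices, and then enumerates all $n^{O(ks)}$ candidate complements. For the hardness half you take a genuinely more direct route. The paper proceeds by induction on $k$: a lemma shows that prepending a single new vertex of very large weight to every edge of a $(k-1)$-uniform instance yields a $k$-uniform instance with $\nu \leq 1$ whose optimum determines the old one, and iterating from the Garey--Johnson base case $k=2$ gives the result (so, unrolled, the paper's hyperedges are $\{u,v\}\cup\{v_1,\dots,v_{k-2}\}$ with $k-2$ \emph{shared} heavy padding vertices). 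You instead give a one-shot reduction from graph maximum stable set using one heavy hub plus \emph{private} unit-weight padding sets $P_f$, and your calibration is sound: if $h\in S$ then each edge $f$ of $H$ with both endpoints in $S$ forces a padding vertex of $P_f$ out of $S$, so with $I=S\cap V_H$ and $D$ the set of edges of $H$ inside $I$ you get $w(S)\leq w(h)+(k-3)|E_H|+|I|-|D|\leq w(h)+(k-3)|E_H|+\alpha(H)$, since deleting one endpoint per edge of $D$ leaves a stable set of size at least $|I|-|D|$; the case $h\notin S$ is dominated because $w(h)>|V_H|$, and the case $k=3$ (empty padding) degenerates correctly. Both constructions achieve $\nu(G)\leq 1$; yours avoids the induction and the intermediate uniformities at the cost of a slightly more delicate weight-counting step, while the paper's modular lemma is reusable and each step is trivial to verify.
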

	
	We also give a first result for excluding an induced subhypergraph: 
	\begin{theorem} \label{subhyper}
	Let $t \in \mathbb{N}$ be fixed, and let $H$ be the 3-uniform hypergraph with $t+3$ vertices and one edge. Then there is a polynomial-time algorithm for the \textsc{$3$-Bounded Hypergraph $2$-Coloring Problem} restricted to $H$-free hypergraphs.  
	\end{theorem}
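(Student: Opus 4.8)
The plan is to reduce, in polynomial time, to a bounded number of instances of $2$-satisfiability, invoking $H$-freeness only through a single structural lemma about the edges of size $3$.

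\emph{Preprocessing.} An edge of size $1$ is monochromatic under every coloring, so if $G$ has one we return ``no''. Let $G_2$ be the graph on $V(G)$ with edge set $\{e\in E(G):|e|=2\}$. If $G_2$ is not bipartite then $G$ is not $2$-colorable; otherwise every proper $2$-coloring of $G$ restricts, on each component $C$ of $G_2$, to one of the two proper $2$-colorings of $C$, and we fix the bipartition of each such $C$ once and for all. Now consider a size-$3$ edge $e=\{a,b,c\}$. If two of $a,b,c$ lie in different parts of one component of $G_2$, they receive different colors in every proper $2$-coloring of $G_2$, so $e$ is automatically non-monochromatic and we delete it; if $a,b,c$ lie pairwise in the same part of a single component of $G_2$, then $e$ is monochromatic in every proper $2$-coloring of $G_2$ and we return ``no''. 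After these deletions, call a size-$3$ edge \emph{spread} if its three vertices lie in three distinct components of $G_2$; one checks that every remaining size-$3$ edge is either spread, or has exactly two vertices in one component of $G_2$ (necessarily in the same part) and its third vertex in another component.

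\emph{The algorithm, modulo a structural lemma.} The key claim is: \emph{if $G$ is $H$-free, then after preprocessing there is a set $B\subseteq V(G)$ with $|B|$ bounded by a function of $t$ such that, for every $2$-coloring $c_0$ of $B$ that is consistent with the fixed bipartitions of the components of $G_2$ meeting $B$, and for every size-$3$ edge $e$ of $G$, the condition ``$e$ is not monochromatic'' becomes, after substituting $c_0$ and the equalities among the vertices of $e$ forced by $G_2$, a constraint involving at most two of the variables $x_v:=[\,v\text{ is red}\,]$, $v\in V(G)\setminus B$.} Granting this, the algorithm enumerates the $2^{|B|}$ colorings $c_0$ of $B$, discards those inconsistent with $G_2$ on the relevant components, and for each remaining $c_0$ forms the $2$-SAT formula on $\{x_v:v\notin B\}$ consisting of: a unit clause fixing $x_v$ for every $v\notin B$ whose $G_2$-component meets $B$; the two clauses expressing $x_u\neq x_v$ for each size-$2$ edge $\{u,v\}$ (with $x_u$ or $x_v$ a constant when that vertex lies in $B$); and, for each size-$3$ edge, the at most two binary clauses supplied by the claim (or, if the resulting constraint is unsatisfiable, we discard $c_0$). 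Then $c_0$ extends to a proper $2$-coloring of $G$ if and only if this formula is satisfiable, and $G$ is $2$-colorable if and only if some $c_0$ works; since $t$ is fixed, $|B|$ is bounded and $2$-SAT is in $\mathrm P$, so this runs in polynomial time. Note that $B$ cannot simply be taken to be the union of all spread edges: the ``sunflower'' $\{\{a,b,x_i\}:i\in[N]\}$ is $H$-free and all $N$ of its edges are spread, yet here the correct choice is $B=\{a,b\}$; the claim must therefore locate and set aside such ``sunflower-like'' families.

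\emph{The structural lemma, and the main obstacle.} It remains to prove the claim, which is where the work lies. One argues by contradiction: assuming no bounded $B$ works, one first absorbs into a bounded set the cores of all large sunflower-like families of size-$3$ edges — this uses the (separately verified) fact that two vertex-disjoint large sunflowers already force an induced copy of $H$, so only boundedly many large cores can occur — after which one is left with an unbounded family of pairwise vertex-disjoint spread edges. Contracting each component of $G_2$ to a single vertex turns these into pairwise vertex-disjoint triples; an iterated pigeonhole argument then extracts size-$3$ edges $e_0,e_1,\dots,e_t$ that are pairwise \emph{far apart}, meaning no two share a component of $G_2$ and no edge of $G$ meets two of them. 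Choosing one vertex $v_i\in e_i$ for $1\le i\le t$, one checks that $G[e_0\cup\{v_1,\dots,v_t\}]$ has $e_0$ as its only edge — any other edge would be a size-$2$ edge or a size-$3$ edge joining $e_0$ or some $e_i$ to another, which is excluded by ``far apart'', while $e_0$ itself has no proper sub-edge since it is spread — so this induced subhypergraph, on $t+3$ vertices, is isomorphic to $H$, contradicting $H$-freeness. The delicate point is the extraction: one must use $H$-freeness itself to rule out that the spread edges are forced either to concentrate on a bounded set of $G_2$-components, or to be pairwise linked by size-$3$ edges, without some set $W_e$ (the vertices $v\notin e$ for which $e$ is the only edge of $G$ inside $e\cup\{v\}$) containing a stable set of size $t$ in the natural size-$\le 3$ hypergraph it carries. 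Making this dichotomy precise is a careful analysis of the ways two size-$3$ edges can interact in an $H$-free $3$-bounded hypergraph, and constitutes the bulk of the proof.
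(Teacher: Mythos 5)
There is a genuine gap: the central structural claim you rely on is false, not merely unproved. Take the $3$-uniform hypergraph on $V = A \cup B$ with $|A| = |B| = n$ whose edges are all triples meeting both $A$ and $B$ (or, even more simply, the complete $3$-uniform hypergraph on $n$ vertices). These are $H_t$-free for every $t \geq 1$ (every induced subhypergraph on $t+3 \geq 4$ vertices has either zero edges or at least three), the first example is even $2$-colorable, and $G_2$ is empty, so your preprocessing deletes nothing and forces no equalities. For any set $B$ of bounded size there is a size-$3$ edge disjoint from $B$, and ``$e$ is not monochromatic'' on three free Boolean variables is a genuine not-all-equal constraint that cannot be written with at most two variables, let alone as binary clauses. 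So no bounded $B$ with the stated property exists, and the reduction to $2$-SAT as you describe it cannot be carried out. (Separately, even granting the claim, your proof of it is only a sketch — you yourself flag that making the ``sunflower/far-apart'' dichotomy precise ``constitutes the bulk of the proof'' — so the proposal would be incomplete in any case.)

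The fix, which is what the paper does, is to stop trying to \emph{hit} every ternary constraint with a bounded set and instead make the dangerous constraints \emph{vacuous}. The paper first handles separately (by brute force in time $O(n^{t+3})$) the case where some color class has fewer than $t$ vertices. Otherwise it guesses two disjoint stable sets $X, Y$ of size $t$, precolors them $1$ and $2$, and builds a $2$-SAT instance containing clauses only for edges that meet $X \cup Y$ in a single color and for size-$2$ edges disjoint from $X \cup Y$; size-$3$ edges disjoint from $X \cup Y$ are simply omitted. Correctness of the omission is exactly where $H_t$-freeness enters: if the coloring read off from a $2$-SAT solution made such an edge $e$ monochromatic, say in color $1$, then every edge of $G$ inside $X \cup e$ other than $e$ is already non-monochromatic by the encoded clauses, yet the whole set is colored $1$; hence $G[X \cup e]$ has $t+3$ vertices and exactly one edge, i.e., is an induced $H_t$ — a contradiction. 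Your counterexamples above are routed to either the brute-force branch (no stable set of size $t$) or handled by this vacuity argument, which is the idea missing from your proposal.
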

	
	Finally, we consider linear hypergraphs. A hypergraph $G$ is \emph{linear} if $|e \cap e'| \leq 1$ for every two distinct $e, e' \in E(G)$. The restriction to linear hypergraphs does not affect NP-hardness: 
	
	\begin{theorem}[Phelps and Rödl \cite{Linear&gadget}]
	For every $r \geq 2$, the \textsc{3-Uniform Hypergraph $r$-Coloring Problem} restricted to linear hypergraphs is NP-complete. 
	\end{theorem}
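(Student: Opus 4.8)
\smallskip
\noindent\emph{Proof sketch.} The plan is to give a polynomial-time many-one reduction from the \textsc{$3$-Uniform Hypergraph $r$-Coloring Problem}, which is NP-complete by Theorem~\ref{3Unf2ColNP}, to its restriction to linear hypergraphs. Membership in NP is clear, since an $r$-coloring of $G$ is a certificate of size $O(|V(G)|)$ that is verified in polynomial time. So the content is a construction that transforms an arbitrary $3$-uniform hypergraph $G$ into a linear $3$-uniform hypergraph $G'$, computable in polynomial time, with $\chi(G')\le r$ if and only if $\chi(G)\le r$.

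The only way a $3$-uniform hypergraph fails to be linear is that two edges meet in two vertices, so the reduction must ``pull such pairs apart''. The key ingredient is an \emph{equality gadget}: a linear $3$-uniform hypergraph $F$ of size depending only on $r$, with two distinguished vertices $a,b$, such that (i) every proper $r$-coloring of $F$ has $c(a)=c(b)$, and (ii) for every color $i\in[r]$ there is a proper $r$-coloring of $F$ with $c(a)=c(b)=i$. Given such gadgets, I would build $G'$ as follows: for each edge $e=\{x,y,z\}\in E(G)$ introduce three new vertices $x_e,y_e,z_e$ and the edge $\{x_e,y_e,z_e\}$; and for each vertex $x\in V(G)$ and each edge $e\ni x$, add a fresh copy of $F$, identifying its $a$ with $x$ and its $b$ with $x_e$, keeping all remaining gadget vertices new and distinct. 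Then the ``main'' edges $\{x_e,y_e,z_e\}$ are pairwise disjoint; an original vertex $x$ lies only in the gadget copies attached at it, and two edges through $x$ either come from the same copy, where they meet only in $x$ since $F$ is linear, or from different copies, where they share only $x$; similarly each $x_e$ lies in one gadget copy and in the single main edge $\{x_e,y_e,z_e\}$. Hence $G'$ is linear and has size $O((|V(G)|+|E(G)|)\,|V(F)|)$, polynomial for fixed $r$. By (i), in any proper $r$-coloring of $G'$ we have $c(x)=c(x_e)$ for all $e\ni x$, so $\{x_e,y_e,z_e\}$ is non-monochromatic exactly when $\{x,y,z\}$ is; thus a proper $r$-coloring of $G'$ restricts to one of $G$. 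Conversely, from a proper $r$-coloring $c$ of $G$, set $c(x_e):=c(x)$ — which properly colors every main edge — and use (ii) to extend each gadget copy (its two boundary vertices already carry the common color $c(x)$), obtaining a proper $r$-coloring of $G'$.

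It remains to construct the equality gadget, and this is where the real work lies. I would take $F$ to be a fixed $(r+1)$-chromatic linear $3$-uniform hypergraph that is critical under edge deletion, then delete one edge $\ell=\{p,q,s\}$ from it, and set $a:=p$, $b:=q$ (the third vertex $s$ behaves exactly like some $x_e$: it is harmlessly forced to the common color). By criticality $F-\ell$ is $r$-colorable while $F$ is not, so every proper $r$-coloring of $F-\ell$ makes $\ell$, hence $\{p,q\}$, monochromatic, giving (i); and any such coloring makes $p,q,s$ share a color, which a permutation of $[r]$ turns into any prescribed color $i$, giving (ii). The one non-elementary point is the existence of such an $F$: for $r=2$ the Fano plane works and is already edge-critical, and for general $r$ this follows from the known fact that linear $3$-uniform hypergraphs have unbounded chromatic number (e.g.\ from the existence of linear $3$-uniform hypergraphs of large girth and large chromatic number), after which passing to an edge-critical subhypergraph preserves linearity and $3$-uniformity. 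For fixed $r$ any such $F$ has constant size. I expect the main obstacle to be precisely this gadget step — in particular, securing constant-size edge-critical linear $3$-uniform hypergraphs of each chromatic number — whereas verifying that $G'$ is linear and colorability-equivalent to $G$ is routine bookkeeping.
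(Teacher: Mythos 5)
The paper does not prove this statement at all --- it is quoted as an external result of Phelps and R\"odl \cite{Linear} --- so there is no internal proof to compare yours against; I can only assess your reduction on its own terms, and it is essentially sound. The linearization scheme (fresh edge $\{x_e,y_e,z_e\}$ per original edge, plus an equality gadget joining $x$ to $x_e$) is the standard way such results are obtained, and your gadget construction is correct: if $F$ is an edge-critical linear $3$-uniform hypergraph with $\chi(F)=r+1$ and $\ell=\{p,q,s\}\in E(F)$, then $F-\ell$ is $r$-colorable and every $r$-coloring of it makes $\ell$ monochromatic, which gives both properties (i) and (ii) after permuting colors; linearity of $F$ also guarantees no edge of $F-\ell$ contains both $p$ and $q$, and your case analysis showing $G'$ is linear checks out. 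Two points deserve to be made explicit. First, ``passing to an edge-critical subhypergraph'' of a high-chromatic linear hypergraph only yields $\chi=r+1$ exactly if you first trim edges one at a time down to chromatic number $r+1$; this works because deleting a single edge from a $3$-uniform hypergraph lowers the chromatic number by at most one (recolor one vertex of the deleted edge with a brand-new color), but it should be said. Second, you correctly identify the real external dependency: the existence, for every $r$, of a (constant-size, since $r$ is fixed) linear $3$-uniform hypergraph with chromatic number exceeding $r$. This is a genuine classical fact (it follows, e.g., from the Erd\H{o}s--Hajnal construction of hypergraphs of large girth and chromatic number, or from partial Steiner triple systems of high chromatic number), and for $r=2$ your Fano-plane instance makes the case fully self-contained; but as written your argument is a reduction modulo that black box rather than a proof from first principles. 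Incidentally, the explicit gadgets the paper builds in Lemmas \ref{Gadget1} and \ref{Gadget2} for its own Theorem \ref{Linear-3ColNP} play exactly the role of your equality gadget for $r=3$, which is reassuring evidence that your plan is the intended one.
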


    The following result gives an algorithm for $2$-coloring certain linear hypergraphs:
	\begin{theorem} [Chattopadhyay and Reed \cite{Linear-BddDegree}]
	    There is a polynomial-time algorithm for the \textsc{$k$-Uniform Hypergraph 2-Coloring Problem} restricted to linear hypergraphs with maximum degree bounded by a function of $k$.
	\end{theorem}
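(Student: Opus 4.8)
The plan is to prove this via the Lov\'asz Local Lemma (LLL) together with its algorithmic (Moser--Tardos) form, which is the standard route to $2$-colorings of bounded-degree uniform hypergraphs. Since $k$ is fixed, ``maximum degree bounded by a function of $k$'' means bounded by a constant $D$, and the goal is: for $D$ below an appropriate threshold, every linear $k$-uniform hypergraph with maximum degree at most $D$ is $2$-colorable, and a $2$-coloring can be produced in polynomial time. The decision algorithm is then immediate --- run the search and output ``yes''.

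For the existence part, color each vertex of $G$ independently and uniformly from $\{1,2\}$, and for each edge $e$ let $A_e$ be the event that $e$ is monochromatic, so $\Pr[A_e]=2^{1-k}$. The event $A_e$ depends only on the colors of the $k$ vertices of $e$, hence is mutually independent of all $A_{e'}$ with $e\cap e'=\emptyset$; since each vertex of $e$ lies in at most $D-1$ other edges, $A_e$ is adjacent in the dependency graph to at most $k(D-1)$ other events. The symmetric LLL gives a proper $2$-coloring whenever $e\cdot 2^{1-k}\cdot\bigl(k(D-1)+1\bigr)\le 1$, i.e.\ for $D$ of order up to $2^{k}/k$. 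To make this constructive I would apply Moser--Tardos: starting from the random coloring, repeatedly pick a monochromatic edge and recolor its vertices uniformly at random; with a constant-factor slack in the local-lemma inequality this halts after $O(|E(G)|)$ resamplings in expectation (and can be derandomized), each step polynomial-time for fixed $k$, and outputs a valid $2$-coloring.

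The crux --- and the part I expect to be hardest --- is pushing the degree threshold up to whatever the theorem actually asserts, since the bound $k(D-1)$ on the number of dependencies makes no use of linearity and the plain symmetric LLL is almost certainly lossy here. Two standard remedies suggest themselves: replace the symmetric LLL by a lopsided or cluster-expansion version with a dependency structure tailored to the fact that any two edges of $G$ share at most one vertex, improving the admissible degree to order $2^{k}$ (or better); or run a direct entropy-compression / partial-resampling analysis, which yields the sharper threshold and is automatically algorithmic, obviating a separate appeal to Moser--Tardos. In either case one must keep enough slack to guarantee a polynomial running time, handle the routine bookkeeping (verifying linearity and the degree bound, locating monochromatic edges) within the same budget, and --- should the intended threshold $f(k)$ reach into a regime where not every such hypergraph is $2$-colorable --- supply a genuine decision procedure rather than an unconditional ``yes''.
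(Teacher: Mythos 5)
This statement is quoted in the paper only as background, with a citation to Chattopadhyay and Reed; the paper contains no proof of it, so there is no in-paper argument to compare against. Judged on its own, your proposal is essentially correct for the statement \emph{as literally phrased}: since the theorem only asserts the existence of \emph{some} function $f(k)$ bounding the degree, it suffices to take $f(k)=\Theta(2^{k}/k)$, apply the symmetric Lov\'asz Local Lemma exactly as you do (each bad event $A_e$ has probability $2^{1-k}$ and at most $k(D-1)$ neighbours), and invoke Moser--Tardos to obtain a $2$-coloring in expected polynomial time (with the usual $\epsilon$-slack one can also derandomize); the decision problem is then trivial because every instance satisfying the degree promise is $2$-colorable. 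Note that your argument never uses linearity, which is fine for the statement as quoted but is genuinely different from what Chattopadhyay and Reed do: their contribution (predating Moser--Tardos) was an algorithmic result obtained by a Beck/Molloy--Reed--style iterated random coloring in which linearity is used to control the component structure of the residual hypergraph, pushing the \emph{constructive} degree threshold for linear hypergraphs close to the nonconstructive LLL bound at a time when general constructive LLL arguments lost an exponential factor. Your route buys simplicity and, with modern machinery, an even cleaner constructive threshold; theirs buys the historically stronger algorithmic guarantee specific to linear hypergraphs. Your closing caveat is the right one to flag: if one insisted on a threshold $f(k)$ beyond the regime where $2$-colorability is automatic, a genuine decision procedure would be needed, but the statement does not require that.
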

	
	We ask how our results extend to linear 3-uniform hypergraphs. For $s \in \mathbb{N}$, we let $M_s$ denote the 3-uniform hypergraph with $3s$ vertices and $s$ pairwise disjoint edges. We show that in linear hypergraphs, excluding a fixed induced matching implies bounded matching number, which immediately implies (assuming Theorems \ref{NP-Dic} and \ref{stable-set}): 
	\begin{theorem}
	    Let $s \in \mathbb{N}$. The \textsc{3-Uniform Hypergraph 2-Coloring Problem}, the \textsc{3-Uniform Hypergraph 2-Precoloring Extension Problem}, and the \textsc{3-Uniform Hypergraph Maximum Stable Set Problem} restricted to linear $M_s$-free hypergraphs are polynomial-time solvable. 
	\end{theorem}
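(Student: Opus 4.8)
The plan is to deduce the statement from a structural lemma asserting that in linear $3$-uniform hypergraphs, excluding a fixed induced matching bounds the matching number: if $G$ is linear, $3$-uniform, and $M_s$-free, then $\nu(G)\le c\,s^2$ for an absolute constant $c$ (one can take $c=11$). Granting the lemma, a linear $M_s$-free $3$-uniform hypergraph automatically lies in the class of $3$-uniform (hence $3$-bounded) hypergraphs with $\nu(G)\le cs^2$; the \textsc{$2$-Coloring} and \textsc{$2$-Precoloring Extension} statements then follow from Theorem \ref{NP-Dic} in the case $k=3$, $r=2$ (which is polynomial-time solvable for every bound on $\nu$), and the \textsc{Maximum Stable Set} statement follows from Theorem \ref{stable-set} with $k=3$. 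So essentially all the content is in the lemma.

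To prove the lemma, fix a maximum matching $e_1,\dots,e_m$ of $G$, where $m=\nu(G)$, and define an auxiliary $3$-uniform hypergraph $\mathcal H$ on vertex set $[m]$ by letting $\{a,b,c\}$ be an edge of $\mathcal H$ whenever some $f\in E(G)$ satisfies $|f\cap e_a|=|f\cap e_b|=|f\cap e_c|=1$. The first observation is that if $S\subseteq[m]$ is a stable set of $\mathcal H$, then $\{e_j:j\in S\}$ induces a copy of $M_{|S|}$ in $G$: any $f\in E(G)$ with $f\subseteq\bigcup_{j\in S}e_j$ that is not one of the $e_j$ satisfies $f\neq e_j$ for all $j\in S$, hence meets each such $e_j$ in at most one vertex by linearity, hence meets exactly three of the $e_j$ in one vertex each, contradicting stability of $S$. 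Thus $M_s$-freeness forces $\alpha(\mathcal H)\le s-1$.

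The second step bounds $|E(\mathcal H)|$, and this is where linearity is used essentially. For a fixed pair $\{a,b\}$, an edge $\{a,b,c\}$ of $\mathcal H$ is witnessed by some $f\in E(G)$ using one vertex $u\in e_a$ and one vertex $v\in e_b$; since $G$ is linear there is at most one edge of $G$ through any fixed such $\{u,v\}$, and there are only $3\times 3=9$ choices of $(u,v)$, so each pair of $[m]$ lies in at most $9$ edges of $\mathcal H$, giving $|E(\mathcal H)|\le 3\binom m2\le\tfrac32 m^2$. A standard random-deletion argument (sample each vertex independently with probability $p$, then remove one vertex from each sampled edge) then shows that any $3$-uniform hypergraph on $m$ vertices with at most $\tfrac32 m^2$ edges contains a stable set of size at least $\max_{p}\big(pm-\tfrac32 p^3 m^2\big)=\tfrac{2\sqrt 2}{9}\sqrt m$. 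Combining this with $\alpha(\mathcal H)\le s-1$ yields $m<\tfrac{81}{8}s^2$, which proves the lemma.

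I expect the edge-count bound for $\mathcal H$ to be the only step requiring genuine care, since it is precisely the place linearity enters: without it, a single pair $\{u,v\}$ could lie in unboundedly many edges of $G$, $\mathcal H$ could be dense, and no bound on $\nu(G)$ would follow — consistent with the fact that the analogous statement fails for non-linear $3$-uniform hypergraphs. Everything else — the reduction to Theorems \ref{NP-Dic} and \ref{stable-set}, and the probabilistic extraction of a large stable set in $\mathcal H$ — is routine.
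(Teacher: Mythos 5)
Your proof is correct, and your reduction to Theorems \ref{NP-Dic} (case $k=3$, $r=2$) and \ref{stable-set} is exactly how the paper deduces the theorem; the genuine difference is in how you establish the key structural lemma that linear $M_s$-free $3$-uniform hypergraphs have bounded matching number. The paper's Lemma \ref{lem:hyperramsey} colours each triple of matching edges by the isomorphism type of the induced subhypergraph on their nine vertices, applies the $3$-uniform Ramsey theorem with up to $2^{\binom{9}{3}}$ colours, and uses linearity to show that the homogeneous type must be a bare $3$-edge matching; the resulting bound on $\nu(G)$ is a hypergraph Ramsey number. You instead form the auxiliary ``conflict'' hypergraph $\mathcal{H}$ on the matching edges, use linearity to show each pair of matching edges lies in at most $9$ conflict triples (hence $|E(\mathcal{H})|\le \frac{3}{2}m^2$), and extract a stable set of size $\frac{2\sqrt{2}}{9}\sqrt{m}$ by random sampling and deletion. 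All steps check out: linearity forces any extra edge inside $\bigcup_{j\in S}e_j$ to meet three distinct matching edges in one vertex each, so a stable set of $\mathcal{H}$ really does yield an induced matching, and the optimization $\max_p\bigl(pm-\frac{3}{2}p^3m^2\bigr)$ is computed correctly. What your route buys is an explicit quadratic bound $\nu(G)<\frac{81}{8}s^2$ in place of a Ramsey-type bound --- immaterial for the polynomial-time solvability claim, since $s$ is fixed, but a quantitatively much stronger and more elementary version of the paper's lemma.
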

	
	We prove:
	\begin{theorem} \label{thm:linear}
	    The \textsc{3-Uniform Hypergraph 3-Coloring Problem} restricted to linear hypergraphs $G$ with $\nu(G) \leq s$ is NP-complete for all $s \geq 532$.
	\end{theorem}
	
	We will give polynomial-time algorithms for the case $k=3$ and $r=2$ and the case $s\leq r-1$ of Theorems \ref{NP-Dic} and \ref{NP-Gap} in Section \ref{Sec-3Bdd2Col} and Section \ref{Sec-kBddrCol-sBdd} respectively. In Section \ref{Sec-NP}, we will talk about some NP-hard cases and complete the proof of Theorems \ref{NP-Dic} and \ref{NP-Gap}. In Section \ref{Sec-StbSet}, we will prove Theorem \ref{stable-set}. In Section \ref{Sec-OneEdge}, we will prove Theorem \ref{subhyper}. Finally, in Section \ref{sec:linear}, we will prove results about linear hypergraphs, including Theorem \ref{thm:linear}.

\section{Algorithm for the case $k=3$ and $r=2$}

\label{Sec-3Bdd2Col}

	In this section, we prove:
	\begin{theorem}\label{3Bdd2Col}
		For every fixed positive integer $s$, the \textsc{ 3-Bounded Hypergraph 2-Coloring Problem} restricted to hypergraphs with $\nu(G)\leq s$ is polynomial-time solvable.
	\end{theorem}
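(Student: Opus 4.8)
The plan is to use the bound on the matching number to reduce, in polynomial time, to a constant number of instances of \textsc{$2$-Satisfiability}. First I would handle trivial cases: if $G$ has no edges it is $2$-colorable, and if $G$ has an edge of size $1$ then the vertex of that edge lies in no stable set, so $G$ is not $2$-colorable. Hence assume every edge has size $2$ or $3$. The key reformulation is that a map $c\colon V(G)\to\{1,2\}$ is a $2$-coloring of $G$ if and only if no edge of $G$ is monochromatic under $c$ (for a $2$-edge this means its ends receive different colors; for a $3$-edge it means its three vertices are not all equal).

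Next I would greedily compute a maximal matching $M$ of $G$, which takes polynomial time, and set $W=\bigcup_{e\in M}e$. Since $|M|\le\nu(G)\le s$ and every edge has at most $3$ vertices, $|W|\le 3s$. By maximality of $M$, no edge of $G$ is contained in $V(G)\setminus W$; equivalently, every edge of $G$ meets $W$. In particular, because every edge has size at most $3$: every $2$-edge has at least one end in $W$, and every $3$-edge not contained in $W$ has exactly one or exactly two vertices outside $W$.

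The algorithm then loops over all $\le 2^{3s}$ functions $c_0\colon W\to\{1,2\}$. For each $c_0$, it first discards $c_0$ if some edge $e\subseteq W$ is monochromatic under $c_0$. Otherwise it builds a $2$-\textsc{Sat} instance with one Boolean variable $x_v$ per vertex $v\in V(G)\setminus W$ (interpreting $x_v=\mathrm{true}$ as $c(v)=1$), adding clauses as follows: (i) for an edge $\{v,w\}$ with $v\notin W$ and $w\in W$, the unit clause asserting $c(v)\neq c_0(w)$; (ii) for an edge $\{v,w_1,w_2\}$ with $v\notin W$ and $w_1,w_2\in W$, nothing if $c_0(w_1)\neq c_0(w_2)$, and otherwise the unit clause asserting $c(v)\neq c_0(w_1)$; (iii) for an edge $\{v,v',w\}$ with $v,v'\notin W$ and $w\in W$, the binary clause asserting ``$c(v)\neq c_0(w)$ or $c(v')\neq c_0(w)$''. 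By the previous paragraph every edge of $G$ either is contained in $W$ (already checked) or falls into exactly one of the cases (i)--(iii), and in each of those cases the clauses added are, given $c_0$, logically equivalent to the assertion that the edge is non-monochromatic. Therefore $c_0$ extends to a $2$-coloring of $G$ precisely when this $2$-\textsc{Sat} instance is satisfiable, which is decidable in linear time. The algorithm returns YES iff some $c_0$ survives the check with a satisfiable instance; this is correct because any $2$-coloring $c$ of $G$ restricts on $W$ to a surviving $c_0$ whose restriction $c|_{V(G)\setminus W}$ satisfies the instance, and conversely any such pair glues to a $2$-coloring of $G$. The running time is $2^{3s}$ times a polynomial in $|V(G)|+|E(G)|$, hence polynomial for fixed $s$.

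I do not anticipate a real obstacle; the only place the hypotheses are essential is step (iii): a $3$-edge with one vertex already colored imposes merely a $2$-clause on its two remaining vertices, which is what keeps the reduction inside \textsc{$2$-Sat}. (For $k\ge 4$ this breaks — a $4$-edge with three uncolored vertices would need a $3$-clause — which is consistent with the NP-hardness results stated above.) In writing the proof, the points requiring care are the case analysis certifying that every edge is handled exactly once, and checking that each ``non-monochromatic'' constraint is faithfully translated into the correct $1$- or $2$-literal clause(s) over the variables $x_v$ once $c_0$ is fixed.
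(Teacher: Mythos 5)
Your proposal is correct and follows essentially the same approach as the paper: fix a maximal matching, enumerate the $2^{O(s)}$ colorings of its vertex set, and reduce each to a \textsc{2-Sat} instance using that maximality forces every edge to have at most two uncolored vertices. The only cosmetic difference is that the paper runs an explicit propagation phase to handle edges with one uncolored vertex before building the \textsc{2-Sat} instance, whereas you encode those constraints directly as unit clauses, which is equivalent.
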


A common strategy for coloring algorithms is using an algorithm for \textsc{2-SAT} as a subroutine: Given an instance $I$ consisting of $n$ Boolean variables and $m$ clauses, each of which contains 2 literals, the \textsc{2-Satisfiability Problem (2-SAT)} is to decide whether there exists a truth assignment for every variable such that every clause contains at least one true literal. We say $I$ is \textit{satisfiable} if it admits such an assignment.	
	\begin{theorem}[Krom \cite{2SAT}; Aspvall, Plass and Tarjan \cite{2SAT-time}]
		The \textsc{2-SAT Problem} can be solved in time $O(n+m)$, where $n$ is the number of variables and $m$ is the number of clauses.
	\end{theorem}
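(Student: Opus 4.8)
The plan is to follow the implication-graph approach. First I would construct, from the $2$-SAT instance $I$, a directed \emph{implication graph} $D$ whose vertex set consists of the $2n$ literals $x_1, \neg x_1, \dots, x_n, \neg x_n$. For each clause $(a \lor b)$, which is logically equivalent to the pair of implications $\neg a \Rightarrow b$ and $\neg b \Rightarrow a$, I would add the two directed edges $\neg a \to b$ and $\neg b \to a$. This yields a digraph with $2n$ vertices and $2m$ edges, built in $O(n+m)$ time. The key structural feature is \emph{skew-symmetry}: the involution $\ell \mapsto \bar\ell$ sending each literal to its negation maps every edge $u \to v$ to an edge $\bar v \to \bar u$, so negation is a graph isomorphism from $D$ onto its reverse.

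Next I would establish the decision criterion: $I$ is satisfiable if and only if no variable $x_i$ has $x_i$ and $\neg x_i$ in the same strongly connected component of $D$. For the forward direction, note that any satisfying assignment must give all literals on a common directed cycle the same truth value (since edges encode implications), so $x_i$ and $\neg x_i$ lying in one strongly connected component would force $x_i = \neg x_i$, a contradiction. For the converse I would exhibit an explicit satisfying assignment, described next.

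To construct that assignment, I would compute the strongly connected components of $D$ together with a topological ordering of the condensation (the acyclic component graph), and then set a literal $\ell$ to \emph{true} precisely when the component of $\ell$ appears after the component of $\bar\ell$ in this ordering. Skew-symmetry guarantees that the components of $\ell$ and $\bar\ell$ are distinct (by the criterion) and that exactly one precedes the other, so the assignment is well defined and consistent. To verify that every clause is satisfied, I would argue by contradiction: if a clause $(a \lor b)$ were false, then both $a$ and $b$ are false, giving $\mathrm{comp}(a) \prec \mathrm{comp}(\bar a)$ and $\mathrm{comp}(b) \prec \mathrm{comp}(\bar b)$; combining this with the edges $\bar a \to b$ and $\bar b \to a$, which force $\mathrm{comp}(\bar a) \preceq \mathrm{comp}(b)$ and $\mathrm{comp}(\bar b) \preceq \mathrm{comp}(a)$ in the topological order, yields the chain $\mathrm{comp}(\bar a) \preceq \mathrm{comp}(b) \prec \mathrm{comp}(\bar b) \preceq \mathrm{comp}(a) \prec \mathrm{comp}(\bar a)$, which closes into a cycle and is therefore absurd.

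Finally, for the running time I would invoke a linear-time strongly-connected-components algorithm (such as Tarjan's single-pass depth-first search), which computes all components in $O(|V(D)| + |E(D)|) = O(n+m)$ time and from which a topological ordering of the condensation can be read off in linear time, so no additional overhead is incurred. Checking the satisfiability criterion and reading off the assignment then each take $O(n)$ time, giving the claimed $O(n+m)$ bound overall. The hard part will be the correctness of the assignment step: the crux is the skew-symmetry argument, which is what simultaneously guarantees that the assignment is consistent (never setting a variable both true and false) and that the chain of inequalities above actually closes into a contradiction. Setting up the interaction between the negation involution and the topological order carefully is the delicate part of the proof.
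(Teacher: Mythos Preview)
The paper does not prove this theorem; it is quoted as a known result from Aspvall, Plass and Tarjan and used as a black box in the algorithms of Sections~\ref{Sec-3Bdd2Col} and~\ref{Sec-OneEdge}. Your proposal correctly outlines the standard implication-graph and strongly-connected-components argument from that reference, so there is nothing in the paper to compare against and your sketch stands on its own as an accurate account of the cited proof.
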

	
Given two partial $r$-coloring collections $\mathcal{C}$, $\mathcal{C}'$ of a hypergraph $G$, we say $\mathcal{C}$ and $\mathcal{C}'$ are \emph{$r$-equivalent} if $\mathcal{C}$ contains a partial $r$-coloring $c_1$ which $r$-extends to $G$ if and only if $\mathcal{C'}$ contains a partial $r$-coloring $c_2$ which $r$-extends to $G$. We say $(X,c)$ is $r$-equivalent to $\mathcal{C}$ if the collection $\{(X,c)\}$ is $r$-equivalent to $\mathcal{C}$. We say that $\mathcal{C}$ is \emph{$r$-equivalent} to $G$ if $G$ is $r$-colorable if and only if $\mathcal{C}$ contains a partial $r$-coloring which $r$-extends to $G$.

	\begin{proof}[Proof of Theorem \ref{3Bdd2Col}]
		Let $G$ be a $3$-bounded hypergraph with $\nu(G)\leq s$. First, we create a collection $\mathcal{C}$ of partial 2-colorings as follows. We fix a maximal matching $F$ of $G$. We define the set $X^F=\cup_{e\in F} e$. Let $\mathcal{C}$ be the set of all partial 2-colorings $(X^F, c:X^F\rightarrow [2])$ of $G$.
		
		We claim that the collection $\mathcal{C}$ has the following three properties. The theorem follows immediately from these properties.
		
		\tbox{ $\mathcal{C}$ is 2-equivalent to $G$.\label{3U2Col-equiv}} 
		
			It suffices to show that if $G$ has a 2-coloring $c$, then there is a partial 2-coloring in $\mathcal{C}$ which has a 2-precoloring extension. Let $c$ be a 2-coloring of $G$. Consider the partial 2-coloring $(X^F, c|_{X^F})\in \mathcal{C}$. Then $c$ is a 2-precoloring extension of $c|_{X^F}$. Thus we have proved \eqref{3U2Col-equiv}.
		
		\tbox{ $\mathcal{C}$ can be computed in time $O(n^{3})$.}
			
			Let $|V(G)|=n$. Since $G$ is 3-bounded, $|E(G)|\leq O(n^3)$. We can go through all edges and construct a maximal matching $F$ in time $O(n^3)$. Checking whether $(X,c)$ is a partial 2-coloring takes time $O(1)$, as the size of $X$ is bounded. Since $|F|\leq \nu (G)\leq s$, we have $|\mathcal{C}|\leq 2^{3s}=O(1)$. Thus, $\mathcal{C}$ can be constructed from $F$ in time $O(n^{3})$.
					
		\tbox{For every partial 2-coloring $c'$ in $\mathcal{C}$,  whether $c'$ has a 2-precoloring extension $(V(G),c)$ can be decided in polynomial time.}
		
			Let $(X^F,c')\in \mathcal{C}$. Since $F$ is a maximal matching and $G$ is 3-uniform, for every edge $e\in E(G)\setminus F$, $|e\setminus X^F|\leq 2$. 			
			
			We define a 2-precoloring extension $(X,c)$ of $c'$ as follows. We define the sets $X_0$, $X_1, \dots$ iteratively. Let $X_0=X^F$. Let $c(v)=c'(v)$ for all $v\in X^F$. Suppose that we have defined $X_i$. If there exists an edge $e\in E(G)$ such that $e\subseteq X_i$ and $e$ is monochromatic, then $c'$ does not have a  2-precoloring extension and we return this determination. If there exists an edge $e\in E(G)$ such that $|e\setminus X_i|=1$ and $c(e\cap X_i)=\{j\}$ for some $j\in[2]$, we define $c(w)$ to be the unique element of $[2]\setminus\{j\}$ for $w\in e\setminus X_i$, and define $X_{i+1}=X_i\cup \{w\}$. Otherwise we stop and let $X=X_i$. This terminates within at most $O(n^3)$ steps. From the construction, clearly $\{(X_i, c)\}$ is equivalent to $\{(X_{i+1}, c)\}$ at every step; and it follows that $\{(X, c)\}$ is equivalent to $\{(X^F, c')\}$, and that if this step returns a determination that $(X^F, c')$ does not 2-extend to $G$, then this determination is correct.
			
			We define a 2-SAT instance as follows. For every $v\in V(G)\setminus X$, we have a variable $x_v$. Let $E'\subseteq E(G)$ be the set of edges such that $|e\setminus X|=2$ for all $e\in E'$. For every edge $e\in E'$, we create a clause $C_e$. Let $e=\{v,u,w\}$ with $v\in X$ and $u,w\in V(G)\setminus X$. If $c(v)=1$, we set $C_e=x_{u}\vee x_{w}$. Otherwise, let $C_e=\overline{x_{u}}\vee \overline{x_{w}}$.
			
			If the 2-SAT instance has a solution $x$, where "true" and "false" are represented by 1 and 0 respectively, then we set $c(v)= x_v+1$ for every $v\in V(G)\setminus X$. Take an edge $e\in E(G)$. If $|e\setminus X|\leq 1$, by the construction of $X$, $e$ is not monochromatic. If $|e\setminus X|=2$, the clause $C_e$ of 2-SAT instance and the construction of $c$ guarantees that at least one of the vertices in $e\setminus X$ receives the opposite color from the vertex in $e\cap X$. Since $F$ is maximal, there is no edge $e$ in $E(G)$ with $|e\setminus X|=3$. Thus, $c$ is a 2-precoloring extension of $(X,c')$. 
			
			If there is a 2-precoloring extension $d$ of $(X,c)$, then we set $x_v=d(v)-1$ for every $v\in V(G)\setminus X$. For every edge $e=\{v,u,w\}\in E'$ with $e\cap X=\{v\}$, if $d(v)=1$, then $C_e= x_{u}\vee x_{w}$. Since $e$ is not monochromatic, without loss of generality we may assume $d(u)=2$, and so $x_u=d(v)-1=1$. Thus, the clause $C_e$ is satisfied. A similar argument applies for $d(v)=2$. From the construction of clauses of this 2-SAT instance, we conclude that $x$ is a solution to the 2-SAT instance.
			
			Therefore, deciding whether $(X,c)$ has a 2-coloring extension is equivalent to solving the 2-SAT instance defined above. 
			
			It remains to show that this can be done in polynomial-time. Let $n$ be the number of vertices of $G$. Constructing the set $X$ takes time $O(n^6)$. Constructing the equivalent 2-SAT instance takes time $O(n^3)$. Solving this 2-SAT instance takes time $O(n)$. So the total running time is $O(n^6)$.
	\end{proof}	
	
	This immediately implies, for fixed $r$, a polynomial-time algorithm for 2-coloring tournaments with no $r$ vertex-disjoint cyclic triangles, which was first proved by Hajebi \cite{Sepehr}. 

\section{Algorithm for the case $s\leq r-1$} \label{Sec-kBddrCol-sBdd}
	In this section, we prove:

	\begin{theorem} \label{kBddrCol-sBdd}
		For fixed positive integers $r,k,s$ with $s\leq r-1$, the \textsc{$k$-Bounded Hypergraph $r$-Precoloring Extension Problem} restricted to hypergraphs $G$ with $\nu(G)\leq s$ is polynomial-time solvable.
	\end{theorem}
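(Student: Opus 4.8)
The plan is to use the bounded matching number to extract a bounded-size ``core'' of the hypergraph, handled by brute force, and to exploit the slack $r>s$ between the number of colors and the matching number to control everything else.

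\textbf{A bounded vertex cover.} Given $G$ with $\nu(G)\le s$ and a partial $r$-coloring $(Y,c_0)$, first reject if $G[Y]$ is not properly colored. Fix a maximal matching $F$ of $G$ and let $W=\bigcup_{e\in F}e$. Then $|W|\le ks=O(1)$; every edge of $G$ meets $W$ (so $W$ is a vertex cover); and $Z:=V(G)\setminus(W\cup Y)$ is a stable set, since any edge contained in $Z$ would avoid $W$. We enumerate all colorings $c_1\colon W\to[r]$ that agree with $c_0$ on $W\cap Y$ and are proper on $G[W]$: there are at most $r^{ks}=O(1)$ of them, and $(Y,c_0)$ $r$-extends to $G$ if and only if for some such $c_1$ the combined partial coloring $c_1\cup c_0$ of $W\cup Y$ — which we discard unless it properly colors $G[W\cup Y]$ — extends to all of $Z$.

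\textbf{Reduction to a constraint problem on $Z$, and the key lemma.} Fix such a $c_1$. If an edge $e$ meets $Z$, it also meets $W\cup Y$ (as $Z$ is stable); if $c_1\cup c_0$ is non-monochromatic on $e\cap(W\cup Y)$ then $e$ is satisfied no matter how $Z$ is colored, and otherwise we obtain a \emph{constraint edge} $g:=e\cap Z$ with \emph{forbidden color} $j_g$ equal to the common value of $c_1\cup c_0$ on $e\cap(W\cup Y)$, where $1\le|g|\le k-1$. Extending $c_1\cup c_0$ over $Z$ then means choosing $c\colon Z\to[r]$ so that no constraint edge $g$ is monochromatic of color $j_g$. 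The key observation is: \emph{if $g_1,\dots,g_m$ are constraint edges, pairwise disjoint and with pairwise distinct forbidden colors, then $m\le s$.} Indeed, picking witnessing edges $e_i\in E(G)$, if $e_i$ and $e_j$ shared a vertex $v$ then $v\notin Z$ (the $g_i$ are disjoint), so $v\in e_i\cap(W\cup Y)$ forces $(c_1\cup c_0)(v)=j_{g_i}$ and likewise $=j_{g_j}$, contradicting $j_{g_i}\ne j_{g_j}$; hence $e_1,\dots,e_m$ form a matching of $G$, so $m\le\nu(G)\le s$. Since $s\le r-1$, every such family of constraint edges misses some color of $[r]$.

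\textbf{Using the slack to finish.} Take a maximal family $\mathcal{M}=\{f_1,\dots,f_p\}$ of pairwise disjoint constraint edges with pairwise distinct forbidden colors $J$; by the lemma $p\le s\le r-1$, so $[r]\setminus J\ne\emptyset$, and $U:=\bigcup_i f_i\subseteq Z$ satisfies $|U|\le(k-1)s=O(1)$. By maximality, every constraint edge either has forbidden color in $J$ or meets $U$. I would brute-force the coloring of the bounded set $U$ ($r^{|U|}=O(1)$ choices, discarding any that makes a constraint edge contained in $U$ monochromatic of its forbidden color), and for each surviving choice pass to the residual task of coloring the stable set $Z\setminus U$; introducing $r$ precolored ``dummy'' vertices, one per color, and the edge $g'\cup\{d_j\}$ for each residual constraint edge $g'$ with forbidden color $j$, this residual task is again an instance of $r$-Precoloring Extension in a $k$-bounded hypergraph with matching number at most $s$, hence can be attacked recursively.

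\textbf{The main obstacle.} The first two steps are routine; the crux is the last one, namely turning the ``colorful matching'' bound into a genuinely polynomial algorithm. The naive shortcut — color everything outside $U$ with a single color not in $J$ and brute-force $U$ — is \emph{not} correct, since a residual constraint edge meeting both $U$ and $Z\setminus U$ can be satisfiable only by putting an off-color vertex in $Z\setminus U$, which monochromatic filling forbids. So one must instead control the recursion: for instance, show that after fixing a well-chosen coloring of $U$ the set of colors that remain forbidden strictly shrinks — so that after boundedly many rounds some color is free everywhere and the residual instance is trivial — or reorganize the bounded brute force so the recursion provably stays polynomial. I expect this bookkeeping, and pinning down exactly where $s\le r-1$ (rather than merely bounded $\nu$) is used, to be where essentially all the work lies.
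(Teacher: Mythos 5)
Your setup is sound and your ``key lemma'' (at most $s$ pairwise disjoint constraint edges with pairwise distinct forbidden colors, hence a missed color) is exactly the combinatorial heart of the paper's argument. But the proof is not complete: you explicitly leave open the step of turning this into a terminating polynomial-time procedure, and that step is where the actual content lies. Moreover, the invariant you tentatively propose for the recursion --- that ``the set of colors that remain forbidden strictly shrinks'' after each round --- is not what one can guarantee: a single round of guessing the colors on $U$ need not eliminate any forbidden color entirely, because a color-$j$-dangerous edge meeting $U$ may still have uncolored vertices left over and remain dangerous for $j$.

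The paper resolves this with a slightly different, and weaker, potential. It does not split off a one-shot vertex cover and recurse; instead it iterates the guessing step directly on the whole hypergraph: given the current partial coloring $(Y,d)$, let $E_i$ be the set of edges $e$ with $e\cap Y\subseteq d^{-1}(i)$ (the ``still dangerous for color $i$'' edges, which includes all edges disjoint from $Y$). If some $E_i=\emptyset$, color every remaining vertex $i$ and stop. Otherwise take a maximal matching $S$ inside $\bigcup_i E_i$, guess all $r^{ks}$ colorings of its vertex set, and note that since $|S|\le s\le r-1$ some color $j$ is unused on $S\cap Y$, so by maximality \emph{every} edge of $E_j$ gains at least one newly colored vertex. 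The potential $\psi(Y,d)=\sum_{i\in[r]}\max\bigl(\{0\}\cup\{|e\setminus Y| : e\cap Y\subseteq d^{-1}(i)\}\bigr)$ starts at most $rk$ and drops by at least $1$ per round (the summand for the missed color $j$ decreases), so after at most $rk$ rounds some $E_i$ is empty and the branch terminates; the total number of branches is $r^{ks\cdot rk}=O(1)$ and each round costs $O(n^3)$. To salvage your write-up you would need to replace ``forbidden colors shrink'' by a quantitative measure of this kind (tracking how much of each dangerous edge remains uncolored), and apply the matching-plus-missed-color step repeatedly rather than once; as it stands the argument has a genuine gap at its crux.
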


	\begin{lemma}\label{GuessS}
		Let $r, k, s \in \mathbb{N}$ with $s \leq r-1$. Let $G$ be a $k$-bounded hypergraph with $\nu(G) \leq s$. Given a partial $r$-coloring $(X,c)$ of $G$, we define $E_{i}=\{e\in E(G):e\cap X\subseteq c^{-1}(i)\}$. If $E_{i}\neq \emptyset$ for all $i\in [r]$, then there is a vertex set $X'\supset X$ and a collection of partial $r$-colorings $\mathcal{C}$ such that
		\begin{itemize} 	
			\item For every $(X^*,c^*)\in \mathcal{C}$, $X^*=X'$;
			\item $|\mathcal{C}|\leq r^{ks}= O(1)$, and $\mathcal{C}$ can be computed from $(X,c)$ in time $O(n^k)$;
			\item There is a color $j\in [r]$ such that for every edge $e\in E_{j}$, $|e\cap X'|\geq |e\cap X|+1$; and
			\item $\mathcal{C}$ is $r$-equivalent to $(X,c)$.
		\end{itemize}
	\end{lemma}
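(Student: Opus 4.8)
The plan is to take $X'=X\cup V(F)$ for a carefully chosen maximal matching $F$ of $G$, and to let $\mathcal{C}$ be the collection of \emph{all} partial $r$-colorings $(X',c')$ of $G$ with $c'|_X=c$ (equivalently, all proper $r$-colorings of $G[X']$ extending $c$). Three of the four required properties hold no matter which maximal matching $F$ is used. First, every member of $\mathcal{C}$ has vertex set $X'$ by construction. Second, since $F$ is a matching, $|F|\le\nu(G)\le s$, so $|X'\setminus X|\le|V(F)|\le k|F|\le ks$, whence $|\mathcal{C}|\le r^{|X'\setminus X|}\le r^{ks}$; and $\mathcal{C}$ is computable from $(X,c)$ in polynomial time (as stated, $O(n^3)$) by greedily computing a maximal matching and then enumerating all $r$-colorings of the $O(1)$-sized set $X'\setminus X$. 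Third, $\mathcal{C}$ is $r$-equivalent to $(X,c)$: if $(X',c')\in\mathcal{C}$ $r$-extends to a coloring $(V(G),d)$ then $d|_X=c'|_X=c$, so $(X,c)$ $r$-extends to $G$; conversely, if $(X,c)$ $r$-extends via $(V(G),d)$ then $(X',d|_{X'})\in\mathcal{C}$ and $r$-extends to $G$.

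The only property that constrains the choice of $F$ is the third bullet, and for it the following suffices: $F$ is a maximal matching of $G$ and there is a color $j\in[r]$ with $V(F)\cap c^{-1}(j)=\emptyset$. Granting this, let $e\in E_j$. Maximality of $F$ gives $e\cap V(F)\ne\emptyset$; but $e\in E_j$ means $e\cap X\subseteq c^{-1}(j)$, and since $V(F)\cap c^{-1}(j)=\emptyset$ we get $e\cap V(F)\cap X=\emptyset$, so $e\cap V(F)\subseteq V(F)\setminus X\subseteq X'\setminus X$, i.e. $|e\cap X'|\ge|e\cap X|+1$. (The strict inclusion $X\subsetneq X'$ also follows, since no edge of $F$ lies inside $X$—it would be monochromatic under $c$—so $V(F)\setminus X\ne\emptyset$.)

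Thus the lemma reduces to the combinatorial claim: if $E_i\ne\emptyset$ for all $i\in[r]$ and $\nu(G)\le s\le r-1$, then $G$ has a maximal matching $F$ avoiding some color class $c^{-1}(j)$. For this, pick $e_i\in E_i$ for each $i$; since $e_i\cap X\subseteq c^{-1}(i)$, the edge $e_i$ uses no color other than $i$. Because $r>s\ge\nu(G)$, the edges $e_1,\dots,e_r$ do not form a matching of size $r$, so we may take a maximal sub-collection $\{e_i:i\in I\}$ that is a matching; then $t:=|I|\le\nu(G)\le s<r$, every $e_i$ meets $W:=\bigcup_{i\in I}e_i$, and $W\cap X\subseteq\bigcup_{i\in I}c^{-1}(i)$, so $W$ misses $c^{-1}(j)$ for each of the (at least one) colors $j\in[r]\setminus I$. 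If $t=\nu(G)$ then $\{e_i:i\in I\}$ is a maximum matching, hence maximal in $G$, and we are done with any $j\in[r]\setminus I$. Otherwise one extends $\{e_i:i\in I\}$ to a maximal matching $F$ of $G$, greedily adding augmenting edges and, whenever possible, choosing ones using none of the colors in $[r]\setminus I$, so that some $j\in[r]\setminus I$ stays avoided. I expect this extension step to be the main obstacle: one must rule out reaching a non-maximal matching every augmenting edge of which meets every still-available color class. A naive count—the at most $|F|-t$ added edges introduce at most $k(|F|-t)$ new colors, which must be less than $r-t=|[r]\setminus I|$—only settles the case where $t$ is close to $s$; when $t$ is small one needs a more careful argument again invoking the nonemptiness of the $E_i$ together with $\nu(G)\le s$ and $s\le r-1$.
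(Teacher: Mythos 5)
Your handling of the first, second, and fourth bullets is correct and matches the paper. The gap is exactly where you suspected: the third bullet. You reduce it to the claim that $G$ has a maximal matching $F$ (maximal in all of $G$) with $V(F)\cap c^{-1}(j)=\emptyset$ for some $j$, and that claim is false. Take $r=3$, $k=3$, $V(G)=\{a_1,a_2,a_3,b_1,b_2,b_3,p,q,r\}$, $X=\{a_1,a_2,a_3,b_1,b_2,b_3\}$ with $c(a_i)=c(b_i)=i$, and edges $e_1=\{p,q,a_1\}$, $e_2=\{q,r,a_2\}$, $e_3=\{r,p,a_3\}$, $f=\{b_1,b_2,b_3\}$. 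Here $(X,c)$ is a partial $3$-coloring, $e_i\in E_i$ so each $E_i$ is nonempty, and $\nu(G)=2=s\leq r-1$; but the $e_i$ pairwise intersect and each is disjoint from $f$, so the maximal matchings of $G$ are exactly $\{e_1,f\},\{e_2,f\},\{e_3,f\}$, and every one of them contains $f$, which meets all three color classes. So no argument can complete your extension step. (Your parenthetical for strictness has the same root problem: an edge of $F$ contained in $X$ need not be monochromatic --- $f$ above is such an edge --- so a general maximal matching of $G$ may use vertices of $X$ of every color.)

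The repair is to weaken the maximality requirement, which is what the paper does: take $S$ to be a matching contained in $\bigcup_{i\in[r]}E_i$ that is maximal subject to that containment, rather than a maximal matching of $G$. Since the third bullet only concerns edges of $E_j\subseteq\bigcup_i E_i$, this restricted maximality still forces every $e\in E_j$ to meet $V(S)$. And the avoided color now comes for free: each edge of $S$ lies in some $E_i$, so its intersection with $X$ is contained in a single color class, hence $V(S)\cap X$ meets at most $|S|\leq s\leq r-1$ color classes and some $j$ is missed --- no greedy extension and no counting over augmenting edges is needed. (This also gives strictness of $X\subset X'$ correctly: an edge of $S$ contained in $X$ would be monochromatic, which is impossible for a partial coloring.) In the example above, $S$ is a single edge $e_i$, and the construction goes through.
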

	\begin{proof}
		Let $S$ be a matching in $G$ such that $S \subseteq \bigcup_{i\in [r]} E_i$, and $S$ is maximal with respect to this condition. Let $X^{S}=\cup_{e\in S} e$. Let $X'=X\cup X^{S}$. Let $\mathcal{C}$ be the set of all partial $r$-colorings $(X',c':X'\rightarrow [r])$ such that $c'|_{X}=c$. The first property follows immediately from the construction. Since $|S|\leq s$, $|X^{S}|\leq ks$, and we have $|\mathcal{C}|\leq r^{ks}= O(1)$. Finding $S$ takes time $O(n^k)$, and thus, $\mathcal{C}$ can be computed from $(X,c)$ in time $O(n^k)$. This proves the second property.
				
	    For every $e \in S$, there exists $i \in [r]$ such that $e \in E_i$, and therefore we have that $c(v) = i$ for all $v \in e \cap X$. Since $|S|\leq s\leq r-1$, there exists a color $j\in [r]$ such that $c(v) \neq j$ for all $v \in X \cap X^{S}$. Let $e$ be an edge in $ E_j$. We know that $e\cap X\subseteq c^{-1}(j)$, so $e\cap X^{S}\cap X=\emptyset$. But from the definition of $S$, we have $e\cap X^{S}\neq \emptyset$, as otherwise $S$ is not maximal. Thus, $e\cap (X^{S}\setminus X)\neq \emptyset$. This proves the third property.
		
		Suppose that there is a partial $r$-coloring $(X',c')\in \mathcal{C}$ which $r$-extends to $G$. Then by the construction of $\mathcal{C}$, $c'|_{X}=c$. Thus, every $r$-precoloring extension of $(X',c')$ is also an $r$-precoloring extension of $(X,c)$. Now suppose $(X,c)$ $r$-extends to $V(G)$, that is, there is a coloring $c':V(G)\rightarrow [r]$ with $c'|_{X}=c$. Then by the construction of $\mathcal{C}$, $(X',c'|_{X'})\in \mathcal{C}$. Therefore, the last property holds. 
	\end{proof}
	
	\begin{theorem}
		For fixed positive integers $r,k,s$ with $s\leq r-1$, there is an algorithm with the following specifications:
		\begin{itemize}
			\item Input: A $k$-bounded hypergraph $G$ with $\nu(G)\leq s$, and an $r$-precoloring $(X,c)$.
			\item Output: one of
				\begin{itemize}
					\item an $r$-precoloring extension of $(X,c)$ to $V(G)$;
					\item a determination that $(X,c)$ does not $r$-extend to $G$.
				\end{itemize}
			\item Running time: $O(|V(G)|^k)$.		
		\end{itemize}
	\end{theorem}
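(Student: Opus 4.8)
The plan is to turn Lemma~\ref{GuessS} into a recursive procedure $\textsc{Extend}(G,X,c)$ whose input is always a valid partial $r$-coloring (by definition, no monochromatic edge lies inside $X$), and then to show that the recursion tree has only $O(1)$ nodes. Given $(X,c)$, first compute $E_1,\dots,E_r$. If $E_i=\emptyset$ for some $i\in[r]$, return the coloring $c^{*}$ obtained from $c$ by assigning color $i$ to every vertex of $V(G)\setminus X$ (this is just $c$ itself when $X=V(G)$): it is a valid $r$-coloring, since an edge $e\subseteq X$ is non-monochromatic as $(X,c)$ is valid, while an edge $e\not\subseteq X$ has a vertex colored $i$ but cannot be entirely colored $i$, as that would give $e\cap X\subseteq c^{-1}(i)$, i.e.\ $e\in E_i=\emptyset$. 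Otherwise $E_i\neq\emptyset$ for all $i$, so Lemma~\ref{GuessS} applies and returns a set $X'\supsetneq X$ and a collection $\mathcal{C}$ with $|\mathcal{C}|\le r^{ks}$; recurse on every $(X',c')\in\mathcal{C}$, output an extension if some recursive call succeeds, and output ``does not extend'' if all of them fail (in particular if $\mathcal{C}=\emptyset$).

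Correctness follows by induction on the recursion. The base case is verified above. In the recursive step, $\mathcal{C}$ is $r$-equivalent to $(X,c)$, so $(X,c)$ $r$-extends to $G$ iff some $(X',c')\in\mathcal{C}$ does; and since each member of $\mathcal{C}$ agrees with $c$ on $X$, any extension of such a member extends $(X,c)$, so the coloring returned is genuinely valid.

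The heart of the matter is bounding the recursion depth by a constant. Fix a root-to-leaf path with internal nodes $(X_1,c_1),\dots,(X_D,c_D)$, where applying Lemma~\ref{GuessS} at $(X_t,c_t)$ produces $X_{t+1}\supsetneq X_t$, a child $(X_{t+1},c_{t+1})$ with $c_{t+1}|_{X_t}=c_t$, and a ``progress color'' $j_t$ (needed only for the analysis). I claim that each color $j\in[r]$ occurs at most $k$ times among $j_1,\dots,j_D$, so $D\le rk$. If $t_1<\dots<t_m$ are the steps with $j_{t_i}=j$, pick an edge $e$ in the set $E_j$ computed at $(X_{t_m},c_{t_m})$, which is nonempty since the lemma was applied there; because $X_t\subseteq X_{t_m}$ and $c_{t_m}|_{X_t}=c_t$ for all $t\le t_m$, one checks that $e$ lies in the set $E_j$ computed at $(X_{t_i},c_{t_i})$ for every $i$, so the third bullet of Lemma~\ref{GuessS} gives $|e\cap X_{t_i+1}|\ge|e\cap X_{t_i}|+1$; chaining these inequalities along the nested sets $X_1\subseteq X_2\subseteq\cdots$ gives $|e\cap X_{t_m+1}|\ge m$, and since $e\cap X_{t_m+1}\subseteq e$ and $G$ is $k$-bounded, $m\le k$. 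Hence the recursion tree has depth at most $rk$ and branching at most $r^{ks}$, so it has at most $(rk+1)(r^{ks})^{rk}=O(1)$ nodes; since each node costs $O(|V(G)|^{3})$ (computing the $E_i$, finding the maximal matching of Lemma~\ref{GuessS}, and building $\mathcal{C}$), the total running time is $O(|V(G)|^{3})$.

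I expect this depth bound to be the main obstacle. A naive potential such as $\sum_{i}\sum_{e\in E_i}|e\setminus X|$ strictly decreases at each step but is only polynomially large, which against a constant branching factor would give an exponential recursion tree; the point is that the hypothesis $s\le r-1$ is exactly what lets Lemma~\ref{GuessS} always exhibit a progress color, and that a single edge can ``absorb'' a fixed progress color only $k$ times before running out of vertices, which pins the depth down to a function of $r$ and $k$ alone.
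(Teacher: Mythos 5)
Your proposal is correct and follows essentially the same route as the paper: iteratively apply Lemma~\ref{GuessS}, branching over the $O(1)$ partial colorings in $\mathcal{C}$, with the base case $E_i=\emptyset$ handled by assigning color $i$ to all of $V(G)\setminus X$. The only difference is how the constant depth bound is proved: the paper uses the potential $\psi((Y,d))=\sum_{i\in[r]}\max(\{0\}\cup\{|e\setminus Y|:e\cap Y\subseteq d^{-1}(i)\})\leq rk$, which decreases by at least $1$ at each application of the lemma, whereas you count how many times each color can serve as the progress color along a root-to-leaf path; both arguments correctly yield depth at most $rk$ and hence an $O(1)$-size recursion tree and $O(|V(G)|^3)$ total running time.
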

	\begin{proof}
		\setcounter{tbox}{0}
		We define a sequence $\mathcal{C}_0, \dots$ of collections of partial $r$-colorings iteratively, as follows. Let $\mathcal{C}_0=\{(X,c)\}$. 
		
		Suppose that we have defined $\mathcal{C}_t$. Given a partial $r$-coloring $(Y,d)\in \mathcal{C}_t$, let $$E_{t,i}^{Y,d}=\{e\in E(G):e\cap Y\subseteq d^{-1}(i)\}.$$ If $E_{t,i}^{Y,d}=\emptyset$ for some $i\in [r]$ and $(Y, d) \in \mathcal{C}_t$, then we define $d'$ by setting $d'|_Y = d|_Y$ and $d'(v)=i$ for all $v\in V(G)\setminus Y$ and return $d'$. Note that $d'$ is an $r$-coloring of $G$: Since $(Y, d)$ is a partial $r$-coloring, it follows that no edge of $G[Y]$ is monochromatic. Therefore, if $G$ contains an edge $e$ which is monochromatic with respect to $d'$, then $e \setminus Y \neq \emptyset$. It follows that $e  \setminus Y \neq \emptyset$, and since $d'(v) = i$ for all $v \in V(G) \setminus Y$, it follows that every vertex of $e$ is colored $i$ by $d'$. But then $e \cap Y \subseteq d^{-1}(i)$, a contradiction. This shows that $d'$ is an $r$-coloring of $G$. 
		
		Otherwise, for every $(Y, d) \in \mathcal{C}_t$, we have that $E_{t,i}^{Y,d} \neq \emptyset$, and so there is a collection of  partial $r$-colorings  $\mathcal{C}_{t+1}^{Y,d}$ which satisfies the properties in Lemma \ref{GuessS} applied to $G$ and $(Y, d)$. Let $\mathcal{C}_{t+1}=\cup_{(Y,d)\in \mathcal{C}_{t}} \mathcal{C}_{t+1}^{Y,d}$. By Lemma \ref{GuessS}, $\mathcal{C}_{t+1}$ is $r$-equivalent to $\mathcal{C}_t$; and inductively, $\mathcal{C}_{t+1}$ is equivalent to $\mathcal{C}_0=\{(X,c)\}$. Thus, if $\mathcal{C}_{t+1}=\emptyset$, then $(X,c)$ does not $r$-extend to $G$ and we return this. 
	
		
		
		
	    It remains to show that this algorithm terminates and runs in polynomial time. To prove this, we define a potential function $\psi((Y,d))=\sum_{i\in [r]} \max (\{0\}\cup \{ |e\setminus Y|:e\cap Y\subseteq d^{-1}(i)\})$. We have $\psi((X,c))\leq rk$ since each summand is at most $k$. We prove by induction on $t$ that for every $(Y,d)\in \mathcal{C}_{t}$, we have $\psi((Y,d))\leq rk-t$. 
		
		It suffices to show that if $(Y, d) \in \mathcal{C}_t$ and $(Y', d') \in \mathcal{C}_{t+1}^{Y,d}$ then $\psi((Y', d')) \leq \psi((Y, d)) -1$. By the third property of Lemma \ref{GuessS}, there is a color $j\in[r]$ such that for every edge $e\in E_{t,j}^{Y, d}$, $|e\cap Y'|\geq |e\cap Y|+1$, which means that $$\max (\{0\} \cup \{|e\setminus Y'|: e\cap Y'\subseteq d'^{-1}(j)\}) \leq \max (\{|e\setminus Y|: e\cap Y\subseteq d^{-1}(j)\})-1.$$ It follows that $\psi((Y', d')) \leq \psi((Y, d)) -1$, as claimed. 
		
		Since $\psi((Y, d)) \geq 0$ for every partial $r$-coloring $(Y, d)$ of $G$, it follows that this algorithm terminates in $t'$ steps for some $t'\leq rk$. Since there are $O(1)$ iterations, and by Lemma \ref{GuessS}, we have $|\mathcal{C}_{t}|=O(1)$ for all $t \leq t'$. Moreover, the set $\mathcal{C}_{t+1}$ can be computed from $\mathcal{C}_t$ in time $|\mathcal{C}_t|\cdot O(n^k)=O(n^k)$. Thus, each step takes time $O(n^k)$. So the total running time is $O(n^k)$. 
	\end{proof}

\section{NP-hardness results for bounded matching number} \label{Sec-NP}
	Let $G$ and $H$ be two hypergraphs. We define an operation, $\ltimes$, via $$G\ltimes H:=(V(G)\cup V(H),E(G)\cup \{e\cup \{x\}: e\in E(H), x\in V(G)\}).$$

	We have the following properties.
	\begin{lemma} \label{BddNu}
		Let $G$ and $H$ be hypergraphs. Then $\nu(G\ltimes H)\leq |V(G)|$.
	\end{lemma}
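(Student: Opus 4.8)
The plan is to observe that \emph{every} edge of $G\ltimes H$ contains at least one vertex of $V(G)$, so that the edges of any matching can be injected into $V(G)$, giving the bound $|V(G)|$.

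First I would fix an arbitrary matching $M$ of $G\ltimes H$ and check, edge by edge, that $f\cap V(G)\neq\emptyset$ for every $f\in M$. By the definition of $\ltimes$, an edge $f$ of $G\ltimes H$ is either an edge of $G$, in which case $f$ is a nonempty subset of $V(G)$ (edges are nonempty by our convention, since $E\subseteq 2^V\setminus\{\emptyset\}$), or $f=e\cup\{x\}$ for some $e\in E(H)$ and some $x\in V(G)$, in which case $x\in f\cap V(G)$. In both cases $f$ meets $V(G)$.

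Next, since the edges of $M$ are pairwise disjoint, the sets $f\cap V(G)$ for $f\in M$ are pairwise disjoint as well, and by the previous step each of them is a nonempty subset of $V(G)$. Picking one vertex of $V(G)$ from each edge of $M$ therefore yields $|M|$ distinct vertices of $V(G)$, so $|M|\leq|V(G)|$. Taking the maximum over all matchings $M$ gives $\nu(G\ltimes H)\leq|V(G)|$, as desired.

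I do not expect any genuine obstacle in this argument; the only point that requires a moment of care is verifying that the original edges of $G$ — not just the newly added edges $e\cup\{x\}$ — intersect $V(G)$, which is where the convention that edges are nonempty is used.
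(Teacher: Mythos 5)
Your proof is correct and takes exactly the same approach as the paper, which simply notes that every edge of $G\ltimes H$ contains at least one vertex of $V(G)$ and leaves the rest implicit. You have merely spelled out the details (the case split between old edges of $G$ and new edges $e\cup\{x\}$, and the injection from matching edges into $V(G)$), all of which are sound.
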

	\begin{proof}
		This follows immediately from the fact that every edge in $G\ltimes H$ contains at least one vertex in $V(G)$.
	\end{proof}
	
	\begin{lemma} \label{Chi}
		Let $H$ be a hypergraph. If $G$ is a hypergraph with $\chi(G)=r$, then $G\ltimes H$ is $r$-colorable if and only if $H$ is $r$-colorable.
	\end{lemma}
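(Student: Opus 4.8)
The plan is to prove the two directions of the biconditional separately, the only nontrivial ingredient being that $\chi(G)=r$ forces every proper $r$-coloring of $G$ to be surjective onto $[r]$: if a proper $r$-coloring of $G$ used only $r-1$ colors, relabeling would give $\chi(G)\le r-1$.

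For the ``if'' direction I would start from a proper $r$-coloring $c_H:V(H)\to[r]$ of $H$ together with an arbitrary proper $r$-coloring $c_G:V(G)\to[r]$ of $G$ (which exists since $\chi(G)=r$), and glue them into $c:V(G)\cup V(H)\to[r]$ by setting $c|_{V(G)}=c_G$ and $c|_{V(H)}=c_H$, where we take $V(G)$ and $V(H)$ disjoint. Then I would verify that $c$ is proper on $G\ltimes H$ by checking the two types of edges: an edge of $E(G)$ is non-monochromatic because $c_G$ is proper, and an edge $e\cup\{x\}$ with $e\in E(H)$ and $x\in V(G)$ is non-monochromatic because $e$ already receives at least two colors under $c_H$.

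For the ``only if'' direction I would take a proper $r$-coloring $c$ of $G\ltimes H$ and show that $c|_{V(H)}$ is proper on $H$. First I would observe that $G$ is an induced subhypergraph of $G\ltimes H$: no added edge $e\cup\{x\}$ is contained in $V(G)$, since $e\subseteq V(H)$ is nonempty and disjoint from $V(G)$. Hence $c|_{V(G)}$ is a proper $r$-coloring of $G$, and so, because $\chi(G)=r$, it uses all $r$ colors. Now suppose some $e\in E(H)$ were monochromatic of color $i$ under $c$; picking $x\in V(G)$ with $c(x)=i$, the edge $e\cup\{x\}$ of $G\ltimes H$ would be monochromatic, a contradiction. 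Therefore $c|_{V(H)}$ is a proper $r$-coloring of $H$.

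I do not expect a real obstacle here; the point worth flagging is that the hypothesis $\chi(G)=r$ (rather than merely $r$-colorability of $G$) is used essentially in the converse direction, precisely to guarantee that the color $i$ appears on some vertex of $V(G)$.
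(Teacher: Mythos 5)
Your proof is correct and follows essentially the same route as the paper: gluing an $r$-coloring of $G$ with one of $H$ for one direction, and for the other using that $\chi(G)=r$ forces every $r$-coloring of $G\ltimes H$ restricted to $V(G)$ to be surjective onto $[r]$, so a monochromatic edge of $H$ would yield a monochromatic edge $e\cup\{x\}$. The only difference is cosmetic: you make the surjectivity observation explicit, whereas the paper invokes it implicitly inside a proof by contradiction.
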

	\begin{proof}
		Suppose for a contradiction that $G\ltimes H$ has an $r$-coloring $c$ and $H$ is not $r$-colorable. Since $c|_{V(H)}$ is not an $r$-coloring of $H$, there exists an edge $e\in E(H)$ such that $e$ is monochromatic with respect to $c|_{V(H)}$. Since $\chi(G)=r$ and $(G \ltimes H)[V(G)] = G$, there exist vertices $v_1,\dots,v_r\in V(G)$ such that $c(v_i)=i$ for all $i \in [r]$. But then one of the edges $e\cup \{v_1\},\dots,e\cup \{v_r\}$ is monochromatic, which contradicts the fact that $c$ is an $r$-coloring of $G\ltimes H$.
		
		Now suppose that $H$ has an $r$-coloring $d$. Since $\chi(G)=r$, $G$ has an $r$-coloring $d'$. We define a new function $d^*:V(G\ltimes H)\rightarrow [r]$ with $d^*(v)=d(v)$ if $v\in V(H)$ and $d^*(v)=d'(v)$ otherwise. For every edge $e\in E(G\ltimes H)$, if $e\in E(G)$, then $d^*|_e=d'|_e$. So $e$ is not monochromatic. Otherwise $e=e'\cup \{v\}$ for some $e'\in E(H)$ and $v\in V(G)$. Then $d^*|_{e'}=d|_{e'}$, and since the edge $e'$ is not monochromatic, it follows that $e$ is not monochromatic. Thus $d^*$ is an $r$-coloring of $G\ltimes H$.
	\end{proof}

	\begin{theorem} \label{NP-Bdd}
		Given fixed integers $k$ and $r$ with $k,r\geq 2$, if the \textsc{$k$-Bounded Hypergraph $r$-Coloring  Problem} is NP-complete, then the \textsc{$(k+1)$-Bounded Hypergraph $r$-Coloring  Problem} restricted to hypergraphs with $\nu(G)\leq r$ is NP-complete.
	\end{theorem}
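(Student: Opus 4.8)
The plan is to reduce the \textsc{$k$-Bounded Hypergraph $r$-Coloring Problem} (assumed NP-complete) to the \textsc{$(k+1)$-Bounded Hypergraph $r$-Coloring Problem} restricted to hypergraphs with $\nu \leq r$, using the operation $\ltimes$ together with Lemmas \ref{BddNu} and \ref{Chi}. Given an instance $H$ of the former, I want to pick a fixed hypergraph $G$ with $|V(G)| \leq r$ and $\chi(G) = r$, and output $G \ltimes H$ as the instance of the latter.

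First I would choose $G$ to be the complete graph $K_r$ on $r$ vertices: it has $r$ vertices, $\chi(K_r) = r$, and all its edges have size $2 \leq k+1$. Then $G \ltimes H$ has edges of two kinds: the edges of $G$ (size $2 \leq k+1$), and edges of the form $e \cup \{x\}$ with $e \in E(H)$ and $x \in V(G)$; since $|e| \leq k$, these have size at most $k+1$, so $G \ltimes H$ is indeed $(k+1)$-bounded. By Lemma \ref{BddNu}, $\nu(G \ltimes H) \leq |V(G)| = r$, so the output lies in the promised class. By Lemma \ref{Chi} (applicable since $\chi(G) = r$), $G \ltimes H$ is $r$-colorable if and only if $H$ is $r$-colorable, which is exactly the correctness of the reduction. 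Finally, $G \ltimes H$ can be constructed from $H$ in polynomial time: $|V(G \ltimes H)| = |V(H)| + r$ and $|E(G \ltimes H)| = |E(G)| + r|E(H)| \leq \binom{r}{2} + r|E(H)|$, and each new edge is obtained by a single union with a constant-size set, so the whole construction takes time polynomial in the size of $H$ (with $r$ a fixed constant).

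One subtlety worth addressing explicitly: I should confirm that $G \ltimes H$ is a legitimate hypergraph in the sense of the paper's definition, i.e. that no edge is empty — this is immediate since every edge contains a vertex of $V(G)$ or of $V(H)$ — and I may also want to remark that edges of $G \ltimes H$ are sets, so if $x \in V(G)$ happened to coincide with a vertex of $e$ the edge $e \cup \{x\}$ would shrink; but $V(G)$ and $V(H)$ are taken disjoint in the definition of $\ltimes$, so this does not occur and all the size bounds above are exact. I do not expect a genuine obstacle here; the only thing to be careful about is bookkeeping the edge sizes and the matching-number bound, both of which fall out of Lemmas \ref{BddNu} and \ref{Chi}. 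The one design choice is which constant graph $G$ to use for the gadget, and $K_r$ is the cleanest since it is $2$-uniform (keeping the edge size of $G \ltimes H$ as small as possible) and has exactly $r$ vertices (making the matching-number bound exactly $r$, matching the statement).
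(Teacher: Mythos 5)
Your proposal is correct and matches the paper's proof essentially verbatim: both take $G = K_r$, form $G \ltimes H$, observe it is $(k+1)$-bounded and constructible in polynomial time, and conclude via Lemmas \ref{BddNu} and \ref{Chi}. The extra care you take with vertex-disjointness and edge sizes is sound but not a departure from the paper's argument.
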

	\begin{proof}
		Let $H$ be a $k$-bounded hypergraph. We set the hypergraph $G=K_r$, a complete graph on $r$ vertices. We have $\chi(G)=r$. The hypergraph $G\ltimes H$ can be constructed from $G$ and $H$ in time $O(n^{k+1})$, where $n=|V(G\ltimes H)|$. By the construction, $G\ltimes H$ is $(k+1)$-bounded. The remaining part of the proof follows immediately from Lemmas \ref{BddNu} and \ref{Chi}.
	\end{proof}
	
	\begin{theorem} \label{NP-Unf}
		Given fixed integers $k$ and $r$ with $k,r\geq 2$, if the \textsc{$k$-Uniform Hypergraph $r$-Coloring  Problem} is NP-complete, then the \textsc{$(k+1)$-Uniform Hypergraph $r$-Coloring  Problem} restricted to hypergraphs with $\nu(G)\leq (r-1)k+1$ is NP-complete.
	\end{theorem}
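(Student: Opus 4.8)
The plan is to follow the proof of Theorem \ref{NP-Bdd} almost verbatim, but replace the gadget $K_r$ by a $(k+1)$-uniform hypergraph of chromatic number exactly $r$ on as few vertices as possible: the point of using $K_r$ in Theorem \ref{NP-Bdd} was only that $\chi(K_r)=r$ (for Lemma \ref{Chi}) and that $|V(K_r)|=r$ is small (for Lemma \ref{BddNu}), and $K_r$ is not $(k+1)$-uniform, so $G \ltimes H$ would not be $(k+1)$-uniform. So I would first look for a $(k+1)$-uniform hypergraph $G$ with $\chi(G)=r$ and $|V(G)|$ as small as possible.

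Concretely, set $m=(r-1)k+1$ and let $G$ be the complete $(k+1)$-uniform hypergraph on $m$ vertices (so $V(G)$ has size $m$ and $E(G)$ is the set of all $(k+1)$-subsets of $V(G)$); note $m \geq k+1$ since $k,r\geq 2$. The first step is to verify that $\chi(G)=r$: a coloring of $V(G)$ is an $r$-coloring of $G$ exactly when no color class has $k+1$ vertices, i.e.\ when every color class has size at most $k$, so the least number of colors is $\lceil m/k\rceil = \lceil (r-1)+1/k\rceil = r$ because $0<1/k<1$.

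Then, given a $k$-uniform hypergraph $H$ as an instance of the \textsc{$k$-Uniform Hypergraph $r$-Coloring Problem}, I would form $G \ltimes H$ (taking $V(G)$ and $V(H)$ disjoint). Since $G$ is $(k+1)$-uniform and $H$ is $k$-uniform, every edge of $G\ltimes H$ has size $k+1$, so $G\ltimes H$ is $(k+1)$-uniform, and it can be constructed from $H$ in time $O(n^{k+1})$ with $n=|V(G\ltimes H)|$. By Lemma \ref{BddNu}, $\nu(G\ltimes H)\leq |V(G)| = (r-1)k+1$; and since $\chi(G)=r$, Lemma \ref{Chi} gives that $G\ltimes H$ is $r$-colorable if and only if $H$ is $r$-colorable. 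Combined with the obvious membership of the problem in NP (an $r$-coloring is a polynomial-size certificate), this is a polynomial-time many-one reduction from the assumed-NP-complete \textsc{$k$-Uniform Hypergraph $r$-Coloring Problem} to the \textsc{$(k+1)$-Uniform Hypergraph $r$-Coloring Problem} restricted to hypergraphs with $\nu\leq (r-1)k+1$, which finishes the proof.

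The only real decision here is the choice of $G$, and it is essentially forced: $(r-1)k+1$ is precisely the minimum number of vertices of a $(k+1)$-uniform hypergraph with chromatic number $r$, which is exactly why the matching-number bound in the statement is $(r-1)k+1$. After that, the argument is routine; the only small things to check are that $m\geq k+1$ (so $G$ actually has an edge and the chromatic-number computation is valid) and that the vertex sets are disjoint when forming $G\ltimes H$, so that the edges coming from $H$ indeed have size $k+1$.
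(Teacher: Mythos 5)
Your proposal is correct and matches the paper's proof essentially verbatim: the paper also takes $G$ to be the complete $(k+1)$-uniform hypergraph on $(r-1)k+1$ vertices, verifies $\chi(G)=r$ (by a counting argument equivalent to your $\lceil m/k\rceil$ computation), and concludes via Lemmas \ref{BddNu} and \ref{Chi}. No gaps.
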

	\begin{proof}
		Let $H$ be a $k$-uniform hypergraph and let $G$ be the complete $(k+1)$-uniform hypergraph with $(r-1)k+1$ vertices. The hypergraph $G\ltimes H$ can be constructed from $G$ and $H$ in time $O(n^{k+1})$, where $n=|V(G\ltimes H)|$. By the construction, $G\ltimes H$ is $(k+1)$-uniform. 
		
		We want to show that $\chi(G)=r$. We choose $k$ vertices to color $i$ for every $i\in [r-1]$, and color the remaining vertex $r$. Since $G$ is $(k+1)$-uniform, every edge of $G$ receives at least two colors. Thus, $\chi(G)\leq r$. Suppose for a contradiction that $\chi(G)\leq r-1$. Then take an $(r-1)$-coloring $c$ of $G$. There exists one color $i$ with $|c^{-1}(i)|\geq \lceil  \frac{(r-1)k+1}{r-1} \rceil \geq \lceil  k+\frac{1}{r-1} \rceil =k+1$. This means that there is a monochromatic edge in $G$, which contradicts  the fact that $c$ is an $(r-1)$-coloring of $G$.
		
		The remaining part of the proof follows immediately from Lemmas \ref{BddNu} and \ref{Chi}.
	\end{proof}

	\begin{theorem} \label{thm:precoloring-ext}
		Given fixed integers $k$ and $r$ with $k,r\geq 2$, if the \textsc{$k$-Uniform Hypergraph $r$-Coloring Problem} is NP-complete, then the \textsc{$(k+1)$-Uniform Hypergraph $r$-Precoloring Extension Problem} restricted to hypergraphs with $\nu(G)\leq r$ is NP-complete.
	\end{theorem}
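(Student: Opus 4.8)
The plan is to reuse the $\ltimes$ construction, but to force all $r$ colours to appear not via a hypergraph $G$ of chromatic number $r$ (as in Theorem \ref{NP-Unf}) but via a precoloring of $r$ vertices. This shrinks $|V(G)|$ from $(r-1)k+1$ down to $r$ and hence, by Lemma \ref{BddNu}, brings the matching number down to $r$, which is exactly what the statement requires.

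First I would record that the problem lies in NP: a full $r$-coloring $c'\colon V(G)\to[r]$ is a polynomial-size certificate, and verifying that $c'$ agrees with the given partial coloring and that each colour class is stable is polynomial; restricting the input to $\nu(G)\le r$ does not change this. For hardness I would reduce from the \textsc{$k$-Uniform Hypergraph $r$-Coloring Problem}, which is NP-complete by hypothesis. Given a $k$-uniform hypergraph $H$, let $G$ be the hypergraph on $r$ new vertices $v_1,\dots,v_r$ with no edges, form $G\ltimes H$, and take the partial $r$-coloring $(X,c)$ with $X=\{v_1,\dots,v_r\}$ and $c(v_i)=i$. Since $E(G)=\emptyset$, every edge of $G\ltimes H$ has the form $e\cup\{v_i\}$ with $e\in E(H)$ and $v_i\notin e$; hence $G\ltimes H$ is $(k+1)$-uniform, it can be built from $H$ in time $O(n^{k+1})$ with $n=|V(G\ltimes H)|$, and $\nu(G\ltimes H)\le|V(G)|=r$ by Lemma \ref{BddNu}. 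The partial coloring $(X,c)$ is valid because $(G\ltimes H)[X]$ is edgeless.

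The heart of the argument is the equivalence ``$(X,c)$ $r$-extends to $G\ltimes H$ if and only if $H$ is $r$-colorable,'' a precoloring analogue of Lemma \ref{Chi} which I would either isolate as a short lemma or inline. For the easy direction, an $r$-coloring $d$ of $H$ extends $(X,c)$ by setting $c'|_{V(H)}=d$: each edge $e\cup\{v_i\}$ fails to be monochromatic because $e$ already does under $d$. For the converse, if $(X,c)$ extends to an $r$-coloring $c'$ of $G\ltimes H$ and some $e\in E(H)$ were monochromatic of colour $j$ under $c'$, then $e\cup\{v_j\}$ would be monochromatic since $c'(v_j)=j$, a contradiction; so $c'|_{V(H)}$ is an $r$-coloring of $H$. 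This establishes correctness of the reduction and hence NP-hardness, which together with membership in NP gives NP-completeness. I anticipate no serious obstacle: the one point requiring care is exactly this converse direction, where the presence of a precoloured vertex of every colour among $v_1,\dots,v_r$ plays the role that $\chi(G)=r$ plays in Lemma \ref{Chi} — and note that needing a precoloured vertex of each colour forces $|X|=r$, matching the matching-number bound precisely.
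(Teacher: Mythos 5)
Your proposal is correct and follows essentially the same route as the paper: the same edgeless $r$-vertex hypergraph $G$ with the precoloring $c(v_i)=i$, the same use of Lemma \ref{BddNu} to bound $\nu(G\ltimes H)\le r$, and the same two-direction equivalence in which the precoloured vertices play the role of $\chi(G)=r$ from Lemma \ref{Chi}. No issues.
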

	\begin{proof}
		Let $H$ be a $k$-uniform hypergraph and let $G$ be a graph with a set of vertices $\{v_1,\dots,v_r\}$ and no edges. Define the precoloring of $G\ltimes H$ to be $(V(G),c')$ with $c'(v_i)=i$ for all $i \in [r]$. The hypergraph $G\ltimes H$ can be constructed from $G$ and $H$ in time $O(n^{k+1})$, and the precoloring $(V(G),c')$ of $G\ltimes H$ can be constructed in time $O(n)$, where $n=|V(G\ltimes H)|$. The graph $H$ is $k$-uniform and $E(G)=\emptyset$, so $G\ltimes H$ is $(k+1)$-uniform.
		
		It remains to show that $G\ltimes H$ has an $r$-precoloring extension with respect to the precoloring $(V(G),c')$ if and only if $H$ is $r$-colorable. 
		
		Suppose $G\ltimes H$ has an $r$-precoloring extension $c$. Assume for a contradiction that $H$ is not $r$-colorable. Since $c|_{V(H)}$ is not an $r$-coloring of $G$, there exists an edge $e\in E(H)$ such that $e$ is monochromatic. By the definition of $c'$, one of the vertices $ v_1,\dots,v_r$ receives the same color as $e$, which contradicts the fact that $c$ is an $r$-precoloring extension of $G\ltimes H$ and $(V(G), c')$.
		
		Now suppose that $H$ has an $r$-coloring $d$. We define a new function $d^*:V(G\ltimes H)\rightarrow [r]$ with $d^*(v)=d(v)$ if $v\in V(H)$ and $d^*(v)=c'(v)$ otherwise. For every edge $e\in E(G\ltimes H)$, $e=e'\cup \{v\}$ for some $e'\in E(H)$ and $v\in V(G)$. Then $d^*|_{e'}=d|_{e'}$. The edge $e'$ is not monochromatic, so $e$ is not monochromatic. Thus $d^*$ is an $r$-coloring of $G\ltimes H$ which $r$-extends $(V(G), c')$.		
	\end{proof}
	
	\begin{theorem}\label{kUnfrColNP}
		Given fixed integers $k$ and $r$ with $k,r\geq 2$, the \textsc{$k$-Uniform Hypergraph $r$-Coloring Problem} is NP-complete if $k+r\geq 5$.
	\end{theorem}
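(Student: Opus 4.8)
The statement to prove is Theorem \ref{kUnfrColNP}: for fixed $k, r \geq 2$ with $k + r \geq 5$, the \textsc{$k$-Uniform Hypergraph $r$-Coloring Problem} is NP-complete. Let me think about which cases are covered by $k + r \geq 5$ with $k, r \geq 2$. The pairs are: $(k,r)$ with $k \geq 3, r \geq 2$, together with $(k,r) = (2,3), (2,4), (2,5), \dots$ — i.e., $k = 2$ and $r \geq 3$, or $k \geq 3$ and any $r \geq 2$. So I need to cover two families. The base cases should be the results already cited in the excerpt: Theorem \ref{rColNP} (Karp) handles $k = 2$ and every $r \geq 3$; and Theorem \ref{3Unf2ColNP} (Lovász; Phelps–Rödl) handles $k = 3$ and every $r \geq 2$. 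Everything else — namely $k \geq 4$ — should be obtained by bootstrapping in $k$.

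**Carrying it out.** First I would observe membership in NP: a claimed $r$-coloring is a certificate checkable in polynomial time (go through all $O(n^k)$ edges and verify none is monochromatic), so it suffices to prove NP-hardness. Next, I would split into cases on $k$. If $k = 2$, then $k + r \geq 5$ forces $r \geq 3$, and NP-hardness is exactly Theorem \ref{rColNP}. If $k = 3$, then NP-hardness for every $r \geq 2$ is Theorem \ref{3Unf2ColNP}. For $k \geq 4$, I would induct on $k$: I claim that for every fixed $r \geq 2$, the \textsc{$k$-Uniform Hypergraph $r$-Coloring Problem} is NP-hard. The base case $k = 3$ is Theorem \ref{3Unf2ColNP}. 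For the inductive step, suppose the \textsc{$(k-1)$-Uniform Hypergraph $r$-Coloring Problem} is NP-complete for some $k - 1 \geq 3$; then Theorem \ref{NP-Unf} (the $G \ltimes H$ construction with $G$ a complete $k$-uniform hypergraph on $(r-1)(k-1)+1$ vertices) immediately gives that the \textsc{$k$-Uniform Hypergraph $r$-Coloring Problem} is NP-complete — in fact even when restricted to hypergraphs with bounded matching number, but for this theorem we only need the unrestricted statement. Chaining the implication from $k = 3$ up to any desired $k \geq 4$ completes the proof.

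**Main obstacle.** Honestly, there is no serious obstacle here: the theorem is essentially a bookkeeping consolidation of the cited base cases (Theorems \ref{rColNP} and \ref{3Unf2ColNP}) with the reduction machinery of Theorem \ref{NP-Unf} applied iteratively. The only point requiring a moment's care is checking that the arithmetic condition "$k + r \geq 5$ and $k, r \geq 2$" is exactly the union of "$k = 2, r \geq 3$", "$k = 3, r \geq 2$", and "$k \geq 4, r \geq 2$" — and then noting that the last family is reachable from $k = 3$ by repeatedly applying Theorem \ref{NP-Unf}, which requires $k - 1 \geq 2$ at each step (true, since we only invoke it for $k \geq 4$, hence $k - 1 \geq 3 \geq 2$). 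I would also make sure to state explicitly that although Theorem \ref{NP-Unf} produces instances with $\nu(G) \leq (r-1)k+1$, this only strengthens the conclusion; for the present theorem we discard that refinement and keep only NP-completeness of the general problem.
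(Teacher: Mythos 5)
Your proposal is correct and matches the paper's proof: both establish the base cases via Theorem \ref{rColNP} (for $k=2$, $r\geq 3$) and Theorem \ref{3Unf2ColNP} (for $k=3$), and then bootstrap to larger $k$ by iterating the $G\ltimes H$ reduction of Theorem \ref{NP-Unf}, discarding the matching-number refinement. The only difference is bookkeeping in how the base cases are enumerated, which does not affect the argument.
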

	\begin{proof}
		The statement holds for the cases $k=3$ and $r=2$ by Theorem \ref{3Unf2ColNP}, and $k=2$ and $r\geq 3$ by Theorem \ref{rColNP}. By Theorem \ref{NP-Unf}, if the \textsc{$k$-Uniform Hypergraph $r$-Coloring Problem} is NP-complete, then the \textsc{$(k+1)$-Uniform Hypergraph $r$-Coloring Problem} is NP-complete.
	\end{proof}

	Now we are ready to prove our main results.
	\begin{proof}[Proof of Theorem \ref{NP-Dic}]
		The first and second polynomial-time solvable cases follow from Theorem \ref{kBddrCol-sBdd} and Theorem \ref{3Bdd2Col} respectively. The third polynomial-time solvable case follows from Theorem \ref{rP2}, as a graph $G$ with $\nu(G)\leq s$ is guaranteed to be $(s+1)P_2$-free. Combining Theorem \ref{kUnfrColNP} with either Theorem  \ref{NP-Bdd} or Theorem \ref{thm:precoloring-ext}, we have completed the dichotomies.
	\end{proof}

	\begin{proof}[Proof of Theorem \ref{NP-Gap}]
		The first and second polynomial-time solvable cases follow from Theorem \ref{kBddrCol-sBdd} and Theorem \ref{3Bdd2Col} respectively. The third polynomial-time solvable case follows from Theorem \ref{rP2}, as a graph $G$ with $\nu(G)\leq s$ is guaranteed to be $(s+1)P_2$-free. The NP-completeness result comes from Theorems \ref{kUnfrColNP} and \ref{NP-Unf}.
	\end{proof}	

\section{Stable Set} \label{Sec-StbSet}
	
	
	In this section, we consider the complexity of stable set problems in hypergraphs with bounded matching number. The \textsc{$k$-Uniform Hypergraph Maximum Weight Stable Set Problem} is the following: Given a $k$-uniform hypergraph $G$ and a weight function $w:V(G)\rightarrow \mathbb{R}_{\geq 0}$, compute a stable set $S\subseteq V(G)$ with $w(S)$ maximized. When all weights are 1, this is called the \textsc{$k$-Uniform Hypergraph Maximum Stable Set Problem}.
	
	For graphs, the \textsc{Graph Maximum Weight Stable Set Problem} can be solved in polynomial time if the maximum size of an induced matching is bounded:
	\begin{theorem}[Balas and Yu \cite{MaxStb-IndMatching}]
		For a fixed positive integer $s$, the \textsc{Graph Maximum Weight Stable Set Problem} restricted to $sP_2$-free graphs can be solved polynomial time.
	\end{theorem}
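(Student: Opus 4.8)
I would prove this by reducing the weighted optimization to an enumeration problem. Since the weight function $w$ is nonnegative, every stable set $S$ of $G$ is contained in some inclusion-wise maximal stable set $S'$ with $w(S')\ge w(S)$; hence a maximum-weight stable set is always attained by a maximal stable set. So it suffices to list all maximal stable sets of $G$ in time polynomial in $|V(G)|$ (for fixed $s$), compute the weight of each, and return the heaviest.

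The substantive ingredient is the fact — the content of the cited work \cite{MaxStb-IndMatching} — that an $sP_2$-free graph on $n$ vertices has only $n^{O(s)}$ maximal stable sets (equivalently, its complement has only $n^{O(s)}$ maximal cliques). Granting this bound, the polynomial-delay enumeration algorithm of Tsukiyama, Ide, Ariyoshi and Shirakawa \cite{MaxStb-EnumMatching}, which outputs each successive maximal stable set after only polynomial work, lists all of them in total polynomial time, and we are done. To prove the bound I would induct on $s$: when $s=1$ the graph is edgeless and $V(G)$ is the unique maximal stable set; for the inductive step I would fix a small ``pivot'' structure of $G$, split the maximal stable sets $I$ according to how they meet it, and delete closed neighbourhoods of chosen vertices so as to recurse on an induced subgraph whose induced-matching number has dropped by one. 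Summing the inductive bound $n^{O(s-1)}$ over the $n^{O(1)}$ branches then yields $n^{O(s)}$.

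The main obstacle is exactly that last point: arranging the recursion so that the induced-matching number really decreases. The naive attempt — take any edge $uv$, branch on whether $u\in I$, and recurse on $G-N[u]$ — does \emph{not} work, because $sP_2$-freeness concerns \emph{induced} matchings: a size-$(s-1)$ induced matching of $G-N[u]$ need not combine with $uv$ into an induced matching of $G$, since its vertices may be adjacent to the deleted vertex $v$. Fixing this requires more care: one can delete the closed neighbourhoods of \emph{both} endpoints of a pivot edge (so that any surviving induced matching genuinely extends by that edge), or organise the branching around an inclusion-maximal induced matching $M^*$ of $G$ (so that $G$ minus the closed neighbourhood of $V(M^*)$ is edgeless, and one branches on the trace of $I$ on the small set $V(M^*)$, of size $\le 2(s-1)$). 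Each such move fixes one difficulty but threatens another — either the residual graph is no longer $(s-1)P_2$-free, or the number of branches is no longer polynomial — and balancing these so that the induced-matching parameter provably drops while the branching stays bounded is the technical heart of the argument. Everything else, namely the reduction in the first paragraph and the invocation of the enumeration algorithm, is routine.
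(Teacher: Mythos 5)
The paper does not prove this statement at all---it is quoted from Balas and Yu \cite{MaxStb-IndMatching}, and the surrounding text describes exactly the architecture you propose: $sP_2$-free graphs have only polynomially many maximal stable sets, and combining this with polynomial-delay enumeration \cite{MaxStb-EnumMatching} gives the algorithm. Your first paragraph (a nonnegative-weight maximum is always attained at an inclusion-wise maximal stable set, so it suffices to enumerate those) and your invocation of the enumeration algorithm are correct and match the intended route.

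The gap is that the one substantive ingredient---the $n^{O(s)}$ bound on the number of maximal stable sets of an $sP_2$-free graph---is granted rather than proved, and your sketch of its proof ends with an explicitly unresolved difficulty. You are right that the naive recursion fails: for an edge $uv$ with $u\in I$, $v\notin I$, the trace $I\setminus(N[u]\cup N[v])$ need not be a \emph{maximal} stable set of $G-N[u]-N[v]$ (a vertex outside $N[u]\cup N[v]$ may be dominated by $I$ only through vertices of $N(v)$), so one cannot simply recurse and multiply; and conversely an induced $(s-1)P_2$ of $G-N[u]$ need not extend by $uv$. Your two suggested repairs are the right ideas, but neither is carried out, and as you note, each trades one failure mode for another. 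The standard way to close this (essentially Balas--Yu's argument, in the complement) is to exhibit an injective encoding of each maximal stable set $I$ by a bounded-size vertex set: one greedily extracts from $I$ a sequence of at most $s-1$ edges $u_iv_i$ with $u_i\in I$, $v_i\in N(u_i)\setminus I$, pairwise forming an induced matching (which the $sP_2$-freeness caps at length $s-1$), and shows that $I$ is reconstructible from the tuple $(u_1,v_1,\dots,u_t,v_t)$, giving at most $n^{2(s-1)}+1$ maximal stable sets. Without some such completed counting argument the proposal is a correct plan but not a proof; everything else in it is, as you say, routine.
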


	For hypergraphs, we notice that:
	\begin{theorem} \label{MSS}
		For fixed positive integers $k$ and $s$, the \textsc{$k$-Uniform Hypergraph Maximum Stable Set Problem} restricted to hypergraphs with $\nu(G)\leq s$ is polynomial-time solvable.
	\end{theorem}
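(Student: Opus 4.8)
The plan is to use the bounded matching number to reduce the problem to a bounded-size "core" plus independent choices outside it. First I would fix a maximal matching $F$ of $G$; since $\nu(G)\le s$ and $G$ is $k$-uniform, $F$ consists of at most $s$ edges, so the vertex set $X^F=\bigcup_{e\in F}e$ has size at most $ks=O(1)$. The key observation is that, by maximality of $F$, every edge of $G$ meets $X^F$. Hence, for any set $Y\subseteq V(G)\setminus X^F$, the set $Y$ is stable in $G$: no edge of $G$ is contained in $V(G)\setminus X^F$, since every edge intersects $X^F$. More generally, a set $S\subseteq V(G)$ fails to be stable only because of an edge $e$ with $e\subseteq S$, and any such $e$ has $e\cap X^F\neq\emptyset$.

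Next I would enumerate, over all $O(2^{ks})=O(1)$ subsets $A\subseteq X^F$, the best stable set $S$ with $S\cap X^F=A$. Fix such an $A$. If $A$ itself contains an edge of $G$ (checkable in $O(1)$ time since $|A|\le ks$), discard this choice. Otherwise, I claim the optimal completion is obtained greedily: let $B$ be the set of vertices $v\in V(G)\setminus X^F$ such that $A\cup\{v\}$ — or more precisely $A\cup\{v\}$ together with the other chosen outside vertices — creates an edge. The subtlety is that an edge $e$ with $e\cap X^F\subseteq A$ might have $|e\setminus X^F|\ge 2$, so forbidding individual vertices is not immediately correct. To handle this cleanly, I would instead argue as follows: call a vertex $v\in V(G)\setminus X^F$ \emph{bad for $A$} if there is an edge $e$ with $e\subseteq A\cup\{v\}$, i.e. $e\cap X^F\subseteq A$ and $e\setminus X^F=\{v\}$. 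Let $B_A$ be the set of bad vertices, and let $S_A=A\cup\big((V(G)\setminus X^F)\setminus B_A\big)$. Then $S_A$ is stable: any edge $e\subseteq S_A$ has $e\cap X^F\subseteq A$; if $|e\setminus X^F|=0$ then $e\subseteq A$, contradicting our assumption on $A$; if $|e\setminus X^F|=1$, say $e\setminus X^F=\{v\}$, then $v$ is bad for $A$, so $v\notin S_A$, a contradiction; and $|e\setminus X^F|\ge 2$ is impossible since then two vertices of $e$ lie outside $X^F$ but... — here I need one more observation.

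The remaining obstacle, and the place where the argument needs care, is exactly the case $|e\setminus X^F|\ge 2$: such an edge $e$ with $e\cap X^F\subseteq A$ is contained in $S_A$ and is not killed by removing bad vertices. I would resolve this by enlarging $X^F$ first: repeatedly, while some edge $e$ has $|e\setminus X^F|\ge 2$ and $|e\cap X^F|\ge 1$... no — better, I would take $F$ to be a maximal matching and then additionally note we only care about edges $e$ with $|e\setminus X^F|\le 1$ once we observe that we may instead choose the core to be a \emph{hitting-set-like} enlargement. Concretely: start from $X_0=X^F$; while there is an edge $e$ with $e\cap X_i\neq\emptyset$ and $2\le|e\setminus X_i|$... this does not terminate with bounded size in general. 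The honest fix is: an edge with $|e\setminus X^F|\ge 2$ is relevant only if $e\cap X^F\subseteq A$; but then $e$ together with any two of its outside vertices forms a "problem," yet since $e\setminus X^F$ can be large, I should treat each such edge $e$ by observing it forces us to \emph{exclude at least one} of finitely many... Rather than this, the clean route — which I expect is the intended one and which I would write up — is: guess $A=S\cap X^F$ (at most $2^{ks}$ choices) \emph{and} guess, for the finitely many edges $e$ with $e\cap X^F\subseteq A$ and $|e\setminus X^F|\ge 2$, a vertex $v_e\in e\setminus X^F$ to be excluded — but there may be $\Omega(n)$ such edges, so instead I would show that after removing bad vertices, the only surviving edges have all but one vertex outside $X^F$, and reduce to a matching argument inside $V(G)\setminus X^F$ which, by $\nu(G)\le s$, again has bounded size; iterating this at most $k$ times (each iteration drops the "outside part" of a worst edge by one, as in the potential argument of Lemma \ref{GuessS}) terminates with a bounded core, after which the greedy completion is provably optimal. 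The main obstacle is thus making this iteration precise and bounding the number of guessed colorings/choices by a constant; everything else (stability of $S_A$, optimality via an exchange argument, and the $O(n^{k})$ running time for enumerating edges) is routine.
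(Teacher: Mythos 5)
There is a genuine gap here, and you have correctly located it but not repaired it. Your decomposition ``for each $A\subseteq X^F$, find the largest stable $S$ with $S\cap X^F=A$'' creates subproblems that are themselves NP-hard: the edges $e$ with $e\cap X^F\subseteq A$ and $|e\setminus X^F|\geq 2$ force you to find a minimum hitting set for the traces $e\setminus X^F$, and these traces are essentially unconstrained by $\nu(G)\leq s$. Concretely, for $k=3$ take $e_0=\{a,b,c\}$ together with the edges $\{a,u,v\}$ for all edges $uv$ of an arbitrary graph $H$ on $V(G)\setminus\{a,b,c\}$; then $\nu(G)=1$ and $F=\{e_0\}$ is a maximal matching, but for $A=\{a,b\}$ your subproblem is exactly minimum vertex cover in $H$. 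The same example defeats your proposed fix (``a matching argument inside $V(G)\setminus X^F$ which, by $\nu(G)\leq s$, again has bounded size''): the traces form the edge set of $H$, whose matching number is unbounded, because the original hyperedges all pairwise meet \emph{inside} $X^F$ and therefore never contribute to $\nu(G)$. So the iteration you sketch does not terminate with a bounded core.

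The missing idea is a global one for which you already have every ingredient: since $V(G)\setminus X^F$ is stable and has at least $n-ks$ vertices, \emph{every} maximum stable set $S^*$ satisfies $|V(G)\setminus S^*|\leq ks$. The paper therefore does not fix $S\cap X^F$ and optimize outside; it simply enumerates all $O(n^{ks})$ candidate complements $U\subseteq V(G)$ with $|U|\leq ks$, checks whether $V(G)\setminus U$ is stable in time $O(n^k)$, and returns the largest stable set found. In the bad subproblems of your decomposition the optimal completion falls far below $n-ks$, so they may be ignored --- but your write-up never invokes the $n-ks$ lower bound on the optimum, and that lower bound is essentially the whole proof. (It is also exactly what fails in the weighted version, where a tiny stable set can carry all the weight; hence the contrasting NP-hardness in Theorem \ref{MWSS}.)
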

	\begin{proof}
		Let $G$ be a $k$-uniform hypergraph with $\nu(G) \leq s$, and let $F\subseteq E(G)$ be a maximal matching. Let $n$ be the number of vertices of $G$. We have $|F|\leq s$. The set $V(G)\setminus (\cup _{e\in F} e)$ is stable as $F$ is maximal, and $|V(G)\setminus (\cup _{e\in F} e)|\geq n-ks$. Thus, a maximum stable set of $G$ is of size at least $n-ks$.
		
		Therefore, to find a maximum stable set, we can simply enumerate all choices of a set $U\subseteq V(G)$ with $|U|\leq ks$, and check if the set $V(G)\setminus U$ is stable, and return the largest stable set found this way. There are $n^{ks}$ choices of the set $U$, and for each $U$, it takes time $O(n^k)$ to verify stability. Thus, the total running time is $O(n^{k(s+1)})$.
	\end{proof}

    In contrast, we show the following result for the weighted version of the problem: 

	\begin{theorem} \label{MWSS}
		For a fixed positive integer $k\geq 3$, the \textsc{$k$-Uniform Hypergraph Maximum Weight Stable Set Problem} restricted to hypergraphs with $\nu(G)\leq 1$ is NP-complete.
	\end{theorem}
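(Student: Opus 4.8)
The plan is to reduce from a known NP-hard problem whose structure naturally lives on a single hyperedge's worth of interaction. Since $\nu(G) \le 1$ means that any two edges of $G$ intersect, a $k$-uniform hypergraph with $\nu(G)\le 1$ is very constrained, but weighted stable set still has enough room: a stable set just has to avoid containing any edge fully, and with all edges pairwise intersecting we can encode a clique-like or vertex-cover-like constraint structure. Concretely, I would reduce from \textsc{Maximum Weight Stable Set in Graphs} (equivalently \textsc{Minimum Weight Vertex Cover}), or more cleanly from \textsc{Independent Set}/\textsc{Clique}, which is NP-hard. The first step is to take an arbitrary graph $G_0$ (instance of the graph problem) and build a $k$-uniform hypergraph $G$ on the vertex set $V(G_0)$ together with $k-2$ fixed ``padding'' vertices $p_1,\dots,p_{k-2}$: for each edge $uv \in E(G_0)$, put the hyperedge $\{u,v,p_1,\dots,p_{k-2}\}$ into $E(G)$. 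Then any two hyperedges of $G$ share all of $p_1,\dots,p_{k-2}$, so $\nu(G) \le 1$, and $G$ is $k$-uniform as required.

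The second step is to choose the weights so that a maximum-weight stable set in $G$ reads off a maximum-weight stable set in $G_0$. Give each $v \in V(G_0)$ its original weight $w_0(v)$ (or weight $1$ in the unweighted case), and give each padding vertex $p_i$ weight $0$. A stable set $S$ of $G$ is exactly a set $S \subseteq V(G) $ containing no hyperedge $\{u,v,p_1,\dots,p_{k-2}\}$ in full. If $S$ omits at least one padding vertex, then every hyperedge is automatically not contained in $S$, so $S$ is stable regardless of which original vertices it contains — this is the degenerate case, but since padding vertices have weight $0$, we lose nothing by assuming a maximum-weight stable set contains all of $p_1,\dots,p_{k-2}$ (adding them back never destroys stability in the worst case only... ) — actually here is the subtlety: adding back padding vertices can turn a stable set unstable. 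So instead I argue directly: the maximum weight of a stable set equals $\max\big(w(V(G_0)),\ w_0(\text{max stable set of } G_0)\big)$, where the first term comes from taking $S = V(G_0)$ (omitting all padding, trivially stable) and the second from taking $S = \{p_1,\dots,p_{k-2}\} \cup I$ for $I$ independent in $G_0$. Since $w(V(G_0)) = \sum_v w_0(v)$ is a known quantity, comparing the hypergraph optimum against it recovers the graph optimum whenever the graph optimum is the binding one; to force that, I would additionally scale up, e.g. by adding one extra weight-$W$ vertex for very large $W$ adjacent (via padding) appropriately — or more simply, pick the source problem to be \textsc{Clique} in a graph and complement, so that the all-of-$V(G_0)$ solution is never optimal unless $G_0$ has no edges.

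Cleaner still, and the step I would actually carry out: reduce from \textsc{Vertex Cover}. A set $C \subseteq V(G_0)$ is a vertex cover iff $V(G_0)\setminus C$ is independent iff $\{p_1,\dots,p_{k-2}\}\cup(V(G_0)\setminus C)$ is a stable set of $G$. So a minimum vertex cover corresponds to a maximum-weight stable set among those containing all padding vertices, and one checks that restricting to such stable sets is WLOG by a direct weight argument once we also verify that a stable set omitting some $p_i$ (hence equal to an arbitrary subset of the original vertices, union possibly some padding) never beats the best padding-complete one — this holds after giving the original vertices uniform positive weight and noting $V(G_0)$ itself need not be compared if we instead make the padding-complete family dominate by construction. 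Membership in NP is immediate (a stable set is a polynomial-size certificate, checkable in $O(n^k)$ time). The main obstacle I anticipate is exactly this bookkeeping about stable sets that omit a padding vertex: I must make sure the reduction's weighting makes the ``interesting'' stable sets (those using all padding vertices, which genuinely encode the graph constraints) the optimal ones, rather than the trivial escape route of dropping a $p_i$ and keeping everything. This is handled by a careful but routine choice of weights (e.g. padding vertices weight $0$, original vertices weight $1$, and observing that any stable set missing some $p_i$ has weight at most $|V(G_0)|$, which we can ensure is dominated, or simply by reducing from a problem where the all-ones solution is infeasible, such as \textsc{$3$-uniform Hypergraph $2$-Coloring} reformulated, though vertex cover is the most transparent).
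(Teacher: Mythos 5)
Your construction is the same as the paper's (the paper phrases it as an induction that adds one universal vertex at a time, but after unrolling it is exactly your $k-2$ padding vertices inserted into every hyperedge, which forces $\nu \leq 1$). However, you correctly identify the one genuinely delicate point --- stable sets that omit a padding vertex, in particular $S = V(G_0)$ itself --- and then you never actually resolve it. With your stated weights (padding vertices weight $0$, original vertices weight $w_0$ or weight $1$), the set $V(G_0)$ is stable in the hypergraph and has weight $\sum_v w_0(v)$, which is always at least the weight of $\{p_1,\dots,p_{k-2}\} \cup I$ for any independent set $I$ of $G_0$. So the hypergraph optimum is identically $\sum_v w_0(v)$ and carries no information about $G_0$; the reduction fails. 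Your proposed repairs do not close this: reducing from \textsc{Clique} and complementing does not remove the escape route of dropping a padding vertex, the claim that stable sets missing some $p_i$ ``can be ensured to be dominated'' is false for the weights you give, and the remark about adding an extra weight-$W$ vertex is left too vague to check.

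The missing idea is that the padding vertices themselves must carry the large weights: assign each $p_i$ weight $N = \sum_{v \in V(G_0)} w_0(v) + 1$ (this is what the paper does, one vertex at a time). Then $\{p_1,\dots,p_{k-2}\}$ is itself stable (every hyperedge contains two vertices of $V(G_0)$), so the optimum is at least $(k-2)N$, while any stable set omitting some $p_i$ has weight at most $(k-3)N + \sum_v w_0(v) < (k-2)N$. Hence every maximum-weight stable set contains all padding vertices and equals $\{p_1,\dots,p_{k-2}\} \cup I$ for a maximum-weight independent set $I$ of $G_0$, completing the reduction. Note that this is precisely where the weights are doing all the work: with the near-uniform weights you propose, the instance is essentially unweighted, and the unweighted problem with $\nu(G) \leq s$ is polynomial-time solvable (Theorem \ref{MSS}), so no such weighting could possibly yield NP-hardness.
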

	
	In order to prove Theorem \ref{MWSS}, we need the following results.
	\begin{theorem} [Garey and Johnson \cite{MaxStb-NP}] \label{MSS-NP}
		The \textsc{Maximum Stable Set Problem} is NP-complete.
	\end{theorem}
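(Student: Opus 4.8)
The plan is to establish this classical result in two steps: membership in NP, followed by NP-hardness via a polynomial-time reduction from \textsc{3-SAT}, which is NP-complete by Cook's theorem. I would first phrase the problem in its decision form: given a graph $G$ and an integer $\ell$, does $G$ contain a stable set of size at least $\ell$? Membership in NP is immediate, since a vertex set $S$ serves as a certificate, and verifying both that $|S| \geq \ell$ and that no edge of $G$ is contained in $S$ (which, for graphs, means no two vertices of $S$ are adjacent) takes polynomial time.

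For NP-hardness, I would reduce from \textsc{3-SAT}. Given a 3-CNF formula $\phi$ with clauses $C_1, \dots, C_m$, I build a graph $G_\phi$ as follows. For each clause $C_j = (\ell_{j,1} \vee \ell_{j,2} \vee \ell_{j,3})$, introduce three vertices, one per literal occurrence, and join them pairwise into a triangle (the clause gadget). Then, across all gadgets, add an edge between every pair of vertices corresponding to a variable $x$ and its negation $\overline{x}$ (the consistency edges). This construction is clearly computable in time polynomial in the size of $\phi$, and I would set the target to $\ell = m$.

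The heart of the argument is the claim that $\phi$ is satisfiable if and only if $G_\phi$ has a stable set of size $m$. In the forward direction, from a satisfying assignment I select, for each clause, one literal that is set true and take the corresponding vertex; these $m$ vertices meet each triangle in exactly one vertex, and no two can be contradictory since a fixed assignment cannot make both $x$ and $\overline{x}$ true, so they form a stable set of size $m$. In the reverse direction, a stable set $S$ of size $m$ must, because each triangle contributes at most one vertex of $S$ and there are exactly $m$ triangles, meet each triangle in precisely one vertex; the consistency edges guarantee that the chosen literals are mutually non-contradictory, so setting each selected literal to true (and extending arbitrarily) gives a well-defined assignment that satisfies every clause. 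An alternative route would be to reduce from \textsc{Clique} via the complement graph, but that would require separately establishing NP-hardness of \textsc{Clique}, so the direct \textsc{3-SAT} reduction is cleaner.

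The main obstacle is not conceptual but lies in arguing the correctness of the reduction precisely. In particular, I expect the delicate points to be: (i) verifying that a stable set of size exactly $m$ is forced to be a transversal hitting each clause gadget once, rather than concentrating in fewer gadgets, which relies squarely on the triangle structure; and (ii) confirming that the consistency edges never spuriously obstruct a stable set of size $m$ in the forward direction. Both follow from the gadget design, but stating them carefully is the crux.
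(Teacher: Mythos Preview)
Your proof is correct: the reduction from \textsc{3-SAT} via clause triangles and consistency edges is the standard textbook argument, and both directions of the equivalence are handled properly. However, the paper does not prove this statement at all; it is quoted as a classical result of Garey and Johnson and used as a black box in the proof of Theorem~\ref{MWSS}. So your approach differs from the paper's only in that you supply an argument where the paper simply cites the literature.
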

	
	\begin{lemma} \label{MWSS-Ind}
		For a fixed positive integers $k\geq 3$, if the \textsc{$(k-1)$-Uniform Hypergraph Maximum Weight Stable Set Problem} is NP-complete, then the \textsc{$k$-Uniform Hypergraph Maximum Weight Stable Set Problem} restricted to hypergraphs with $\nu(G)\leq 1$ is NP-complete.
	\end{lemma}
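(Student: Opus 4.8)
The plan is to give a polynomial-time reduction from the $(k-1)$-uniform problem, using the operation $\ltimes$ with a one-vertex hypergraph, in the same spirit as Theorems \ref{NP-Bdd} and \ref{thm:precoloring-ext}. Let $H$ be a $(k-1)$-uniform hypergraph with a weight function $w$; as is standard for weighted problems, we may assume (after clearing denominators, which is possible in polynomial time) that $w$ takes positive integer values given in binary. Let $G$ be the hypergraph with a single vertex $x$ and no edges, and set $G' = G \ltimes H$. Then $E(G') = \{e \cup \{x\} : e \in E(H)\}$, so $G'$ is $k$-uniform, it can be constructed from $H$ in time $O(|V(G')|^k)$, and by Lemma \ref{BddNu} we have $\nu(G') \leq |V(G)| = 1$; thus $G'$ belongs to the restricted class.

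Next I would define a weight function $w'$ on $V(G')$ by $w'(v) = w(v)$ for $v \in V(H)$ and $w'(x) = 1 + \sum_{v \in V(H)} w(v)$. Writing $W = \sum_{v \in V(H)} w(v)$, note that $w'(x) = W + 1$ has bit-length polynomial in that of $w$, so the reduction remains polynomial-time. Let $\alpha_w(H)$ denote the maximum weight of a stable set of $H$ (so $\alpha_w(H) \geq 0$, as $\emptyset$ is stable). The crux of the reduction is the claim that the maximum weight of a stable set of $(G', w')$ equals $w'(x) + \alpha_w(H)$. Granting this, a solution to the $k$-uniform maximum weight stable set problem on $(G', w')$ determines $\alpha_w(H)$, and deleting $x$ from an optimal stable set of $G'$ yields an optimal stable set of $H$; since the decision version of the weighted stable set problem is plainly in NP (guess the set, and check stability and the weight bound in polynomial time for fixed $k$), NP-completeness follows.

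To prove the claim I would establish both inequalities. For the lower bound, if $T$ is a maximum-weight stable set of $H$, then $T \cup \{x\}$ is stable in $G'$: an edge $e \cup \{x\} \subseteq T \cup \{x\}$ would force $e \subseteq T$, contradicting stability of $T$; and $w'(T \cup \{x\}) = w'(x) + \alpha_w(H)$. For the upper bound, let $S$ be a maximum-weight stable set of $G'$. If $x \notin S$ then $w'(S) \leq W < w'(x) \leq w'(x) + \alpha_w(H)$, which contradicts maximality of $S$ in view of the set $T \cup \{x\}$ just constructed; hence $x \in S$. Then $S \setminus \{x\}$ is stable in $H$, since $e \subseteq S \setminus \{x\}$ for some $e \in E(H)$ would give $e \cup \{x\} \subseteq S$, a contradiction; therefore $w'(S) = w'(x) + w(S \setminus \{x\}) \leq w'(x) + \alpha_w(H)$.

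There is no real obstacle here beyond bookkeeping: the single vertex $x$ simultaneously forces $\nu(G') \leq 1$ and, once its weight exceeds the total weight of $V(H)$, must lie in every optimal stable set, so that the value of the weighted stable set problem is transmitted faithfully; one only needs to check that this choice of $w'(x)$ keeps the instance of polynomial size, which it does. Combined with the NP-completeness of the graph (equivalently, $2$-uniform hypergraph) maximum stable set problem from Theorem \ref{MSS-NP}, iterating this lemma over $k$ will give Theorem \ref{MWSS}.
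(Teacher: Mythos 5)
Your proposal is correct and follows essentially the same route as the paper: both add a single new vertex to every edge (your $G \ltimes H$ with a one-vertex edgeless $G$ is exactly the paper's construction $E(H')=\{\{v\}\cup e : e\in E(G)\}$), assign it weight $1+\sum_{u} w(u)$ so that every optimal stable set must contain it, and conclude that optimal stable sets of the two instances correspond by adding or deleting that vertex. Your write-up is somewhat more explicit about the two inequalities and the encoding size, but the underlying argument is the same.
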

	\begin{proof}
		Suppose the \textsc{$(k-1)$-Uniform Hypergraph Maximum Weight Stable Set Problem} is NP-complete. Let $G$ be a $(k-1)$-uniform hypergraph with weight function $w$. We construct a new $k$-uniform hypergraph $H$ with $V(H)=\{v\}\cup V(G)$ and $E(H)=\{\{v\}\cup e:e\in E(G)\}$. We define the weight function $w':V(G)\rightarrow \mathbb{R}_{\geq 0}$ such that $w'(u)=w(u)$ for each $u\in V(G)$ and $w'(v)=\sum_{u\in V(G)}w(u)+1$. From the construction, since $v$ is contained in every edge of $H$, it follows that the hypergraph $H$ satisfies $\nu(H)\leq 1$.
		
		For a set $T \subseteq V(G)$, $T$ is a stable set of $G$ if and only if $T \cup \{v\}$ is a stable set of $H$. Let $S$ be a maximum weight stable set of $H$ with respect to the weight function $w'$. By the construction, the vertex $v$ is in $S$.  It follows that $S \setminus \{v\}$ is a maximum weight stable set of $G$, and thus, to find a maximum weight stable set of $G$, it suffices to find a maximum weight stable set of $H$. 
		
		
		
		Since the construction can be done in polynomial time, we have proved this lemma.
	\end{proof}
	
	\begin{proof} [Proof of Theorem \ref{MWSS}]
		We prove this by induction on $k$. When $k=2$, by Theorem \ref{MSS-NP}, the \textsc{Graph Maximum Stable Set Problem} is NP-complete. Thus, the \textsc{Graph Maximum Weight Stable Set Problem} is NP-complete.
		
		Suppose that the \textsc{$k$-Uniform Hypergraph Maximum Weight Stable Set Problem} is NP-complete. By Lemma \ref{MWSS-Ind}, the \textsc{$(k+1)$-Uniform Hypergraph Maximum Weight Stable Set Problem} restricted to hypergraphs with $\nu(G)\leq 1$ is NP-complete. Moreover, the \textsc{$(k+1)$-Uniform Hypergraph Maximum Weight Stable Set Problem} is NP-complete.
	\end{proof}

\section{Excluding an induced subhypergraph with one edge}\label{Sec-OneEdge}
	For $t \in \mathbb{N}$ with $t \geq 3$, let $H_t$ be the 3-bounded hypergraph with $t+3$ vertices and one edge. In this section, we give a polynomial-time algorithm for 2-coloring 3-bounded $H_t$-free hypergraphs.
	
	\begin{lemma} \label{Ind-guess}
	    Let $t \in \mathbb{N}$, and let $G$ be a $3$-bounded $H_t$-free hypergraph. There is a polynomial-time algorithm to test if $G$ has a 2-coloring with at least $t$ vertices of each color.
	\end{lemma}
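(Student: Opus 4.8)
The plan is to combine three ingredients: (i) the propagation-and-\textsc{2-SAT} technique from the proof of Theorem~\ref{3Bdd2Col}, to reduce hypergraph $2$-colorings to satisfying assignments of a $2$-SAT formula; (ii) the $H_t$-freeness hypothesis, to bound how much branching is needed before no $3$-edge remains uncolored; and (iii) an enumeration over $O(n^{2(t-1)})$ ``witness'' vertices to impose the cardinality requirement on top of the $2$-SAT instances. I first preprocess. If $G$ has an edge of size $1$ it has no $2$-coloring at all, so answer ``no''; so assume every edge has size $2$ or $3$. Call a $3$-edge \emph{redundant} if it contains a $2$-edge of $G$; a redundant $3$-edge is automatically non-monochromatic in every valid $2$-coloring, so deleting all redundant $3$-edges changes neither the set of $2$-colorings nor $H_t$-freeness (deleting such an edge cannot create an induced $H_t$, since the contained $2$-edge survives). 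Let $G'$ be the result; every $3$-edge of $G'$ is \emph{bare} (contains no $2$-edge). If $G'$ has no $3$-edge it is a graph, which has a $2$-coloring iff it is bipartite; writing its components as $C_1,\dots,C_m$ with bipartition-class sizes $(a_i,b_i)$, a $2$-coloring is a choice, for each $C_i$, of which class gets colour $1$, and a dynamic program over the attainable values of $|c^{-1}(1)|\in\{0,\dots,n\}$ decides in time $O(n^2)$ whether both classes can be made of size $\geq t$. So assume $G'$ has a bare $3$-edge.

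The key structural step, using $H_t$-freeness, is: \emph{$G'$ has at most $t$ connected components containing a $3$-edge}. Indeed, if $C_0,\dots,C_t$ were distinct such components, choose a bare $3$-edge $g\subseteq C_0$ and, for $i\in[t]$, a vertex $v_i$ of a $3$-edge of $C_i$; then $S=g\cup\{v_1,\dots,v_t\}$ has $t+3$ vertices, and the only edge of $G'$ inside $S$ is $g$: every other edge lies in one component, hence inside $S\cap C_0=g$ (a sub-pair of $g$, impossible as $g$ is bare) or inside $S\cap C_i=\{v_i\}$ (a size-$1$ edge, which we excluded), so $G'[S]\cong H_t$, a contradiction. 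Let $D_1,\dots,D_\ell$ ($\ell\le t$) be the components containing a $3$-edge, and let $D_0$ be the remaining (graph) part.

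Next I reduce each $D_j$ to a bounded family of $2$-SAT instances. Fix a bare $3$-edge $e\subseteq D_j$; in any valid $2$-coloring exactly one vertex of $e$ is in the minority within $e$, so branch over the $\le 6$ choices of that vertex and its colour, fix the three colours of $e$, and run the propagation rule of Theorem~\ref{3Bdd2Col} (when an edge is about to become monochromatic, colour its last uncoloured vertex oppositely; abort on a conflict). Let $U$ be the uncoloured part afterwards: every uncoloured edge meets $U$ in $\geq 2$ vertices, so the only obstruction to reading off a $2$-SAT formula (with one clause per uncoloured edge) is a $3$-edge contained in $U$. Here $G'[U]$ is again $3$-bounded, $H_t$-free, with only bare $3$-edges, so it again has $\le t$ components with a $3$-edge; recursing, the $3$-edges coloured in successive rounds are pairwise disjoint and all lie in the connected set $V(D_j)$. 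I claim this recursion has depth bounded by a function of $t$ — a sufficiently long such disjoint sequence, together with one suitably chosen vertex from each of $t$ of the $3$-edges, should exhibit an induced $H_t$ — so each $D_j$ produces $O_t(1)$ $2$-SAT instances, at whose leaves no $3$-edge is uncoloured.

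Finally, concatenate the branch-choices for $D_1,\dots,D_\ell$ (keeping $D_0$): this yields $O_t(1)$ global instances, each a partial $2$-coloring together with a $2$-SAT formula $\Phi$ on the uncoloured vertices whose satisfying assignments are exactly the valid completions. For each, let $n_1,n_2$ be the numbers of vertices already coloured $1,2$; since every $D_j$ had its $3$-edge $e$ coloured, $n_1,n_2\ge1$, so we need at least $\max(0,t-n_1)\le t-1$ more vertices of colour $1$ and at least $\max(0,t-n_2)\le t-1$ more of colour $2$. Enumerate all pairs $(W_1,W_2)$ of disjoint uncoloured sets with $|W_1|=\max(0,t-n_1)$ and $|W_2|=\max(0,t-n_2)$ — there are $O(n^{2(t-1)})$ of them — add the clauses forcing $W_1$ to colour $1$ and $W_2$ to colour $2$, and test satisfiability in linear time. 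Answer ``yes'' iff some instance and some pair $(W_1,W_2)$ gives a satisfiable formula; correctness is immediate, and the running time is $n^{O(t)}$. The main obstacle is the depth bound claimed in the previous paragraph: turning ``a long sequence of pairwise-disjoint bare $3$-edges inside one component'' into an induced $H_t$ requires choosing the $t$ witness vertices so as to avoid creating any $2$-edge or extra $3$-edge, and one must also account for propagation in one round colouring vertices across several components of the previous round's uncoloured set; making this quantitative is the heart of the argument.
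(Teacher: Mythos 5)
There is a genuine gap, and you identify it yourself: the claim that the branch-and-propagate recursion has depth bounded by a function of $t$ is asserted but never proved, and everything downstream (the $O_t(1)$ bound on the number of \textsc{2-SAT} instances, hence the polynomial running time) depends on it. Turning ``many pairwise disjoint bare $3$-edges produced by successive rounds'' into an induced $H_t$ is not routine: you must select one vertex from each of $t$ of these edges so that the chosen vertices induce no $2$-edge or $3$-edge among themselves and none back to the distinguished $3$-edge, which requires a Ramsey-type selection argument; and, as you note, propagation can colour vertices across several components of the previous round's uncoloured set, so the ``$\leq t$ components with a $3$-edge per round'' bound does not directly translate into a bound on the number of rounds. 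The preprocessing, the component-counting claim, and the final cardinality step via enumerating witness sets $(W_1,W_2)$ are all fine, but without the depth bound the proof is incomplete.

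The paper avoids this entirely by using $H_t$-freeness at a different point. It enumerates the $2t$ ``witness'' vertices \emph{first}: all pairs of disjoint stable sets $X,Y$ with $|X|=|Y|=t$, precoloured $1$ and $2$ respectively ($O(n^{2t})$ choices). For each such precolouring it builds a single \textsc{2-SAT} instance with clauses only for edges meeting $X\cup Y$ monochromatically and for $2$-edges disjoint from $X\cup Y$; no clause is needed for a $3$-edge $e$ disjoint from $X\cup Y$, because if such an $e$ were monochromatic in colour $1$, then $X\cup e$ would be a set of $t+3$ vertices whose only induced edge is $e$ (every other candidate edge is killed by the existing clauses or by stability of $X$), i.e.\ an induced $H_t$ --- contradiction. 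So $H_t$-freeness is used to show the uncontrolled $3$-edges take care of themselves, rather than to control a branching process. If you want to salvage your route, you would need to prove the depth bound; otherwise the paper's order of quantification (guess the $2t$ witnesses up front, then one \textsc{2-SAT} call) is the cleaner path.
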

	\begin{proof}	
	    \setcounter{tbox}{0}
        We may assume that $|e| \geq 2$ for all $e \in E(G)$, since $G$ is not 2-colorable otherwise. Let $\mathcal{C}$ be a partial 2-coloring collection containing all partial $2$-colorings $(X\cup Y,c')$ for every pair of disjoint sets $X,Y\subseteq V(G)$ with $|X|=|Y|=t$ and $X$, $Y$ stable, and $c':X\cup Y\rightarrow [2]$ with $c'(v)=1$ for all $v\in X$ and $c'(v)=2$ otherwise.  It suffices to show that $\mathcal{C}$ has the following three properties.
        
        \tbox{ \label{eq:equiv} $\mathcal{C}$ is 2-equivalent to the collection of all 2-colorings of G with at least $t$ vertices of each color. }
		
		We only need to show that if $G$ has a 2-coloring $c$ with at least $t$ vertices of each color, then there exists a partial 2-coloring in $\mathcal{C}$ which 2-extends to $G$. Let $c$ be a 2-coloring of $G$ such that $|c^{-1}(i)|\geq t$ for all $i\in [2]$. 
		Let $X$ and $Y$ be subsets of $c^{-1}(1)$ and $c^{-1}(2)$ respectively, with $|X|=|Y|=t$. We have $(X\cup Y, c|_{X\cup Y})\in \mathcal{C}$, and $c$ is a 2-precoloring extension of $(X \cup Y, c|_{X\cup Y})$ to $V(G)$. This proves \eqref{eq:equiv}. 
		
		\tbox{ \label{eq:poly} $\mathcal{C}$ can be computed in time $O(n^{2t+3})$, where $n=|V(G)|$.}

		Since $G$ is 3-bounded, $|E(G)|\leq O(n^3)$. By construction, we have $|\mathcal{C}|\leq O(n^{2t})$. Constructing the sets $X$, $Y$ and the corresponding partial 2-coloring $c$ takes time $O(n^{2t})$. Checking whether $(X \cup Y,c)$ is a partial 2-coloring takes time $O(n^3)$. Thus, $\mathcal{C}$ can be constructed in time $O(n^{2t+3})$. This proves \eqref{eq:poly}. 
        
        \tbox{ \label{eq:2sat} For every partial 2-coloring $(X\cup Y, c)$ in $\mathcal{C}$,  whether $c$ 2-extends to $G$ can be decided in polynomial time.}
        
		
		For convenience, let us denote $S=X\cup Y$. We define a 2-SAT instance as follows. For every $v\in V(G)\setminus S$, we have a variable $x_v$. Let $E'\subseteq E(G)$ be the set of edges $e\in E(G)$ with $|c(e \cap S)|=1$. Note that for every edge $e \in E'$, we have $e \cap S \neq \emptyset$ and $e \setminus S \neq \emptyset$ (since $(S, c)$ is a partial 2-coloring). Thus, $|e \setminus S| \in \{1, 2\}$. For every edge $e\in E'$, we create a clause $C_e$. Let $u, w \in e \setminus S$ with $u \neq w$ with $|e \setminus S| = 2$. If $c(e \cap S)=\{1\}$, we set $C_e=x_{u}\vee x_{w}$. Otherwise, let $C_e=\overline{x_{u}}\vee \overline{x_{w}}$. Next, let $E''$ be the set of edges $e \in E(G)$ with $|e| = 2$ and $e \cap S = \emptyset$. For every $e \in E''$, say $e = \{u, w\}$, we add two clauses $C'_e=x_{u}\vee x_{w}$ and $C''_e=\overline{x_{u}}\vee \overline{x_{w}}$. 
		
		If the 2-SAT instance has a solution $(s_v)_{v \in V(G) \setminus S}$, where "true" and "false" are represented by 1 and 0 respectively, then we set $d(v)= s_v+1$ for every $v\in V(G)\setminus S$, and $d(v) = c(v)$ for all $v \in S$. We claim that $d$ is a 2-coloring of $G$. Consider an edge $e\in E(G)$. If $|c(e \cap S)| > 1$, then $e$ is not monochromatic. If $|c(e \cap S)| = 1$, then $e \in E'$. It follows that the clause $C_e$ of 2-SAT instance and the construction of $d$ guarantees that at least one vertex in $e\setminus S$ receives the opposite color from the vertices in $e\cap S$. Since both sets are non-empty, it follows that $e$ is not monochromatic. It remains to consider the case that $e \cap S = \emptyset$. If $|e| = 2$, then the clauses $C'_e$ and $C''_e$ guarantee that the two vertices of $e$ receive different colors. Therefore, we may assume that $|e| = |e \setminus S| = 3$. Suppose for a contradiction that $e$ is monochromatic. Without loss of generality, assume $d(v)=1$ for all $v\in e$. Let $X = (S \cap c^{-1}(1))$, and consider the set $X\cup e$. Since all edges with a non-empty intersection with $S$ and all edges of size 2 are non-monochromatic, there is no edge $e'\in E(G)$ with $e'\subseteq X\cup e$ and $e' \neq e$. Thus, $G[X\cup e]$ is an induced copy of $H_t$ in $G$, which contradicts the fact that $G$ is $H_t$-free. Therefore, $d$ is a 2-precoloring extension of $(S,c)$. 
		
		If there is a 2-precoloring extension $d$ of $(S, c)$, then we set $x_v=d(v)-1$ for every $v\in V(G)\setminus S$. For every edge $e \in E'$, if $C_e= x_{u}\vee x_{w}$, then $e \cap S$ contains only vertices colored 1, and so $d(u) = 2$ or $d(w) =2$; it follows that $C_e$ is satisfied. If $C_e= \overline{x_{u}}\vee \overline{x_{w}}$, then $e \cap S$ contain only vertices colored 2, and so $d(u) = 1$ or $d(w) =1$; it follows that $C_e$ is satisfied. For every edge $e = \{u, w\} \in E''$, it follows that $d(u) \neq d(w)$, and hence one of $x_u, x_w$ is "true" and the other is "false." It follows that $C'_e$ and $C''_e$ are satisfies. 	From the construction of clauses of this 2-SAT instance, we conclude that this assignment is a solution to the 2-SAT instance. Therefore, deciding whether $(S, c)$ has a 2-coloring extension is equivalent to solving the 2-SAT instance defined above. 
		
		It remains to show that this can be done in polynomial time. Constructing the 2-SAT instance takes time $O(n^3)$. Solving this 2-SAT instance takes time $O(n^3)$. So the total running time is $O(n^3)$. This proves \eqref{eq:2sat} and concludes the proof. 
	\end{proof}	

	\begin{theorem} \label{thm:htmain}
		Let $t \in \mathbb{N}$, and let $G$ be a $3$-bounded $H_t$-free hypergraph. There is a polynomial-time algorithm which takes $G$ as input, and outputs either a 2-coloring of $G$, or a determination that $G$ is not 2-colorable.
	\end{theorem}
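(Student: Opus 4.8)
The plan is to reduce to Lemma \ref{Ind-guess} by a case analysis on the sizes of the two color classes of a putative 2-coloring. First, note that the algorithm of Lemma \ref{Ind-guess} is constructive: whenever $G$ admits a 2-coloring with at least $t$ vertices of each color, the associated 2-SAT instance (for at least one partial coloring $(S,c)\in\mathcal C$) is satisfiable, and from a satisfying assignment one reads off an explicit 2-coloring $d$ of $G$. So I may assume a subroutine that, given $G$, either outputs a 2-coloring of $G$ with at least $t$ vertices of each color, or correctly reports that none exists.

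The algorithm then proceeds as follows. Run the subroutine from Lemma \ref{Ind-guess}; if it returns a coloring, output it. Otherwise, enumerate every set $S\subseteq V(G)$ with $|S|\le t-1$; for each such $S$, test whether both $S$ and $V(G)\setminus S$ are stable in $G$, and if so, output the 2-coloring with color classes $S$ and $V(G)\setminus S$. If no such $S$ works, output that $G$ is not 2-colorable.

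For correctness, suppose $G$ has a 2-coloring with stable color classes $A,B$. If $\min(|A|,|B|)\ge t$, this is a 2-coloring with at least $t$ vertices of each color, so the Lemma \ref{Ind-guess} subroutine finds (some) such coloring and we output it. Otherwise the smaller of $A,B$, call it $S$, satisfies $|S|\le t-1$, and both $S$ and $V(G)\setminus S$ are stable, so the enumeration step outputs a valid 2-coloring. Conversely, anything output by either step is by construction a partition of $V(G)$ into two stable sets, i.e.\ a genuine 2-coloring; hence the algorithm reports ``not 2-colorable'' only when $G$ really is not 2-colorable. For the running time: the subroutine of Lemma \ref{Ind-guess} is polynomial; the enumeration ranges over $O(n^{t-1})$ sets $S$, and checking stability of $S$ and of $V(G)\setminus S$ costs $O(n^3)$ per set since $G$ is 3-bounded, for a total of $O(n^{t+2})$. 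As $t$ is fixed, the overall running time is polynomial.

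There is no serious obstacle here, since the real work is already packaged in Lemma \ref{Ind-guess}; the only points needing care are (i) checking that the two cases ($\min(|A|,|B|)\ge t$ versus $\le t-1$) are exhaustive for every possible 2-coloring, and (ii) confirming that the procedure of Lemma \ref{Ind-guess} can be treated as constructive (returning an actual coloring) rather than as a mere decision oracle.
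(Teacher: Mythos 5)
Your proposal is correct and matches the paper's proof: the paper likewise first runs the algorithm of Lemma \ref{Ind-guess} and, if it fails, enumerates all 2-colorings in which some color class has fewer than $t$ vertices, checking each in $O(n^3)$ time. Your added remarks on the constructiveness of Lemma \ref{Ind-guess} and the exhaustiveness of the two cases are accurate and only make explicit what the paper leaves implicit.
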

	\begin{proof}
		If $G$ satisfies the conditions of Lemma \ref{Ind-guess}, then we are done. Otherwise we can go through every possible coloring such that less than $t$ vertices receive color $i$ for some $i\in[2]$, and check whether it is a 2-coloring, in time $O(n^{t+3})$.
	\end{proof}
	
	Note that the proof of Lemma \ref{Ind-guess} can be modified to work for the precoloring extension version of the problem, and so can Theorem \ref{thm:htmain}. 
	
\section{Linear Hypergraphs} \label{sec:linear}
\subsection{The polynomial-time algorithm}

	We say a $k$-uniform hypergraph is \emph{complete} if its edge set is the set of all $k$-vertex subsets of its vertex set. The \emph{hypergraph Ramsey number}, $R_k(n_1,\dots,n_t)$, is the smallest integer $N$ such that for every function $f:E(G)\rightarrow [t]$ for a complete $k$-uniform hypergraph $G$ with at least $N$ vertices, there exists $i\in [t]$ and a set $S\subseteq V(G)$ with $|S|\geq n_i$ such that all edges $e \subseteq S$ satisfy $f(e)=i$. 
	
	\begin{theorem}[Ramsey \cite{Ramsey}] \label{Ramsey}
		For all positive integers $k$, $n_1$, \dots, $n_t$, the hypergraph Ramsey number $R_k(n_1,\dots,n_t)$ exists.
	\end{theorem}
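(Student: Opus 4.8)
The plan is to prove the theorem by a double induction, establishing along the way a recursive upper bound on $R_k(n_1,\dots,n_t)$; a finite bound is all that is needed, so I will not try to optimize it. The outer induction is on the uniformity $k$. For the base case $k=1$, an edge-coloring $f$ of a complete $1$-uniform hypergraph is simply a partition of its vertex set into $t$ colour classes, and a monochromatic set in colour $i$ is just a subset of the $i$-th class; by pigeonhole, any vertex set of size $\sum_{i=1}^t (n_i-1)+1$ has some class of size at least $n_i$, so $R_1(n_1,\dots,n_t)=\sum_{i=1}^t(n_i-1)+1$.

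For the inductive step I assume $R_{k-1}$ exists for every choice of parameters and every number of colours, and I run an inner induction on $n_1+\dots+n_t$ together with an induction on $t$ to dispatch the degenerate situations. If $n_i<k$ for some $i$, then any set of $n_i$ vertices contains no $k$-edge and is vacuously monochromatic in colour $i$, so $R_k(n_1,\dots,n_t)\le \max_i n_i$; and if, say, $n_1=k$, then either some edge receives colour $1$ — and that edge is a monochromatic set of size $k=n_1$ — or colour $1$ is unused and $f$ is effectively a $(t-1)$-colouring, to which the induction on $t$ applies with parameters $(n_2,\dots,n_t)$. These observations reduce us to the main case, where every $n_i>k$.

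The heart of the argument is a reduction step that yields
\[
R_k(n_1,\dots,n_t)\ \le\ R_{k-1}\bigl(R_k(n_1-1,n_2,\dots,n_t),\ \dots,\ R_k(n_1,\dots,n_{t-1},n_t-1)\bigr)+1 .
\]
Given a complete $k$-uniform hypergraph on a vertex set $V$ whose size is the right-hand side, fix a vertex $v$ and set $V'=V\setminus\{v\}$. Colour each $(k-1)$-subset $e'$ of $V'$ by $g(e')=f(e'\cup\{v\})$. By the choice of $|V'|$ and the definition of $R_{k-1}$, there is a colour $j$ and a set $W\subseteq V'$ with $|W|\ge R_k(n_1,\dots,n_j-1,\dots,n_t)$ such that $g(e')=j$ for every $(k-1)$-subset $e'$ of $W$. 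Now apply the definition of $R_k(n_1,\dots,n_j-1,\dots,n_t)$ to $f$ restricted to the $k$-subsets of $W$: either we already obtain a monochromatic set of the required size in some colour $\neq j$, and are done, or we obtain a set $A\subseteq W$ with $|A|\ge n_j-1$ all of whose $k$-subsets have colour $j$. In the latter case $A\cup\{v\}$ has size at least $n_j$, and every one of its $k$-subsets has colour $j$: those avoiding $v$ because they lie in $A$, those containing $v$ because they have the form $e'\cup\{v\}$ with $e'$ a $(k-1)$-subset of $W$. This completes the reduction and hence the induction.

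The step I expect to require the most care is not the reduction itself but the bookkeeping of well-foundedness of the nested inductions: one must ensure the degenerate cases ($n_i<k$, and ``colour $i$ unused'') are handled before invoking the recursive bound, so that every recursive call strictly decreases $k$, or $t$, or $n_1+\dots+n_t$ in the appropriate lexicographic order. Once that ordering is fixed, the remaining verifications are routine.
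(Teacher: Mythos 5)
The paper does not prove this statement---it is cited as Ramsey's classical theorem \cite{Ramsey}---so there is no in-paper argument to compare against. Your proof is the standard double induction (outer on the uniformity $k$, inner on $n_1+\dots+n_t$) via the Erd\H{o}s--Rado-style reduction $R_k(n_1,\dots,n_t)\le R_{k-1}\bigl(R_k(n_1-1,\dots,n_t),\dots,R_k(n_1,\dots,n_t-1)\bigr)+1$, and it is correct: the degenerate cases are dispatched properly, the verification that every $k$-subset of $A\cup\{v\}$ has colour $j$ is complete, and the lexicographic ordering $(k,\,n_1+\dots+n_t,\,t)$ makes the recursion well-founded.
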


	\begin{lemma} \label{lem:hyperramsey}
		For every positive integer $s$, there exists a positive integer $s'$ such that every 3-uniform linear hypergraph $G$ which contains a matching of size $s'$ contains an induced matching of size $s$.
	\end{lemma}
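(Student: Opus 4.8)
The plan is to fix a large matching in $G$ and run a hypergraph Ramsey argument (Theorem~\ref{Ramsey}) on a $2$-coloring of its triples, with linearity supplying the one nontrivial ingredient. Concretely, I would take $s' = R_3(12, s)$, fix a matching $e_1,\dots,e_{s'}$ of $G$, and first record the key consequence of linearity: if $f\in E(G)$ with $f\subseteq e_i\cup e_j\cup e_k$ for distinct $i,j,k$ and $f\notin\{e_i,e_j,e_k\}$, then $|f\cap e_\ell|\le 1$ for $\ell\in\{i,j,k\}$ (distinct edges of a linear hypergraph meet in at most one vertex), so since $|f|=3$ and $e_i,e_j,e_k$ are pairwise disjoint, $f$ meets each of them in exactly one vertex. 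Also, for any $J'\subseteq[s']$, a matching edge $e_j$ satisfies $e_j\subseteq\bigcup_{i\in J'}e_i$ only if $j\in J'$ (again by disjointness). Combining these two points: every edge of $G$ contained in $\bigcup_{i\in J'}e_i$ is either $e_i$ for some $i\in J'$, or meets exactly three of the edges $e_i$, $i\in J'$, in one vertex each.

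Next I would color each triple $\{i,j,k\}\subseteq[s']$ \emph{red} if there is an edge $f\in E(G)\setminus\{e_i,e_j,e_k\}$ with $f\subseteq e_i\cup e_j\cup e_k$, and \emph{blue} otherwise, and apply Theorem~\ref{Ramsey} to this coloring of the complete $3$-uniform hypergraph on $[s']$. This produces a set $J\subseteq[s']$ that is either all-blue with $|J|\ge s$, or all-red with $|J|\ge 12$. In the blue case, pick any $s$-element subset $J'\subseteq J$; by the observation above, every edge of $G$ inside $\bigcup_{i\in J'}e_i$ equals some $e_i$ (an edge meeting three of the $e_i$ would certify a red triple contained in $J'$), so $G[\bigcup_{i\in J'}e_i]$ has edge set exactly $\{e_i:i\in J'\}$, i.e.\ $\{e_i:i\in J'\}$ is an induced matching of size $s$, which is what we want.

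So it remains to rule out an all-red set $J$ with $|J|=12$, and this counting step is the one I expect to carry the content. Write $U=\bigcup_{i\in J}e_i$, so $|U|=36$; then $G[U]$ is a linear $3$-uniform hypergraph, hence (each pair of vertices lies in at most one edge, and each edge uses three pairs) it has at most $\binom{36}{2}/3=210$ edges. On the other hand, for each of the $\binom{12}{3}=220$ triples $T\subseteq J$, redness gives an edge $f_T\in E(G)\setminus\{e_i:i\in J\}$ with $f_T\subseteq U$; by the first paragraph $f_T$ meets exactly the three matching edges indexed by $T$, so distinct triples yield distinct edges $f_T$, all different from the $e_i$. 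Counting the edges $f_T$ together with the twelve edges $e_i$ gives at least $220+12=232>210$ edges in $G[U]$, a contradiction. The same estimate in fact shows that an all-red set has size at most $11$, so $s'=R_3(12,s)$ suffices; any fixed absolute bound in place of $12$ would work equally well.

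The only pitfalls I would be careful about are bookkeeping in the counting step: that each witness $f_T$ really lies in $E(G[U])$ and differs from every $e_i$ (both forced, since $f_T$ meets three distinct matching edges and so cannot be one of them), and that $T\mapsto f_T$ is injective (it is, since the set of three matching edges $f_T$ meets is exactly $T$). Everything else — the Ramsey reduction and the equivalence between ``no red triple inside $J'$'' and ``$\{e_i:i\in J'\}$ is an induced matching'' — is routine once the linearity observation of the first paragraph is in hand.
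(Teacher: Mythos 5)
Your proof is correct, but it takes a genuinely different route from the paper's. The paper colors each triple $\{i,j,k\}$ by the full labelled isomorphism type of $G[e_i\cup e_j\cup e_k]$ (up to $2^{\binom{9}{3}}$ colors), extracts a monochromatic $s$-set, and then rules out a fourth edge in the common type by comparing two overlapping triples $\{i,j,k\}$ and $\{i,j,l\}$: a fourth edge would force two edges of $G$ sharing exactly two vertices, contradicting linearity. You instead use only two colors (``extra edge present'' vs.\ not) and kill the red side outright with a counting argument: a linear $3$-uniform hypergraph on $36$ vertices has at most $\binom{36}{2}/3=210$ edges, while an all-red $12$-set would force at least $\binom{12}{3}+12=232$ distinct edges (injectivity of $T\mapsto f_T$ holding because $f_T$ meets exactly the three matching edges indexed by $T$). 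All the supporting steps check out: $f_T$ cannot be any matching edge since it is contained in $e_i\cup e_j\cup e_k$, linearity then forces $|f_T\cap e_\ell|=1$ for $\ell\in T$, and in the blue case any non-matching edge inside $\bigcup_{i\in J'}e_i$ would certify a red triple within $J'$. Your approach buys a dramatically smaller Ramsey number, $s'=R_3(12,s)$ versus the paper's $R_3(s,\dots,s)$ with an enormous number of colors (and it also avoids the paper's need to assume $s\geq 4$); the paper's approach is more generic in that it does not rely on the extremal edge count for linear hypergraphs, only on the local two-vertex-overlap contradiction. Either argument proves the lemma.
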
 
	\begin{proof}		
	    We may assume that $s \geq 4$. Let $\{G_1, \dots, G_t\}$ be the set of all linear 3-uniform hypergraphs with vertex set $\{x_1, \dots, x_9\}$. Since there at most $2^{9 \choose 3}$ distinct 3-uniform (labelled) hypergraphs on 9 vertices, it follows that $t \leq 2^{9 \choose 3}$. Let $s' = R_3(n_1, \dots, n_t)$ with $n_1 = \dots = n_t = s$. 
	    
		Let $\{e_1, \dots, e_{s'}\}$ be a matching of size $s'$ in $G$. For $i\in [s']$, let $e_i = \{u_i, v_i, w_i\}$. Let $H$ be a complete 3-uniform hypergraph $V(H) = \{1, \dots, s'\}$. We define $f: E(H) \rightarrow [t]$ as follows. For $e = \{i, j, k\} \subseteq [s']$ with $i<j<k$, we define $f(e) = m$ if $G[e_i \cup e_j \cup e_k]$ is isomorphic to $G_m$ via the isomorphism $u_i \mapsto x_1$, $v_i \mapsto x_2$, $w_i \mapsto x_3$, $u_j \mapsto x_4$, $v_j \mapsto x_5$, $w_j \mapsto x_6$, $u_k \mapsto x_7$, $v_k\mapsto x_8$ and $w_k \mapsto x_9$.

		
		
		
		From Theorem \ref{Ramsey}, it follows that there is a set $S \subseteq [s']$ with $|S| = s$ and $m \in [t]$ such that $f(e) = m$ for all $e \subseteq S$. We claim that $$E(G_m) = \{\{x_1, x_2, x_3\}, \{x_4, x_5, x_6\}, \{x_7, x_8, x_9\}\}.$$ 
		Let $i, j, k, l \in S$ with $i < j < k < l$. Since $G[e_i, e_j, e_k]$ contains the edges $e_i, e_j, e_k$, it follows that $\{\{x_1, x_2, x_3\}, \{x_4, x_5, x_6\}, \{x_7, x_8, x_9\}\} \subseteq E(G_m)$. Suppose for a contradiction that $E(G_m)$ contains a fourth edge $\{x_a, x_b, x_c\}$. Then, since $G_m$ is linear, we may assume that $a \in \{1,2,3\}$, $b \in \{4,5,6\}$, and $c \in \{7,8,9\}$. The graphs $G[e_i \cup e_j \cup e_k]$ and $G[e_i \cup e_j \cup e_l]$ are isomorphic to $G_m$ via isomorphisms $\varphi, \varphi'$, say; and from the definition of $f$ it follows that $\varphi^{-1}(x_a) = \varphi'^{-1}(x_a)$, $\varphi^{-1}(x_b) = \varphi'^{-1}(x_b)$, and $\varphi^{-1}(x_c) \neq \varphi'^{-1}(x_c)$ (since $\varphi^{-1}(x_c) \in e_k$ and $\varphi'^{-1}(x_c) \in e_l$ and $e_k \cap e_l = \emptyset$). But this implies that $G$ contains the edges $\varphi^{-1}(\{x_a, x_b, x_c\})$ and $\varphi'^{-1}(\{x_a, x_b, x_c\})$ which have exactly two vertices in common, contrary to the assumption that $G$ is linear. This proves our claim. 
		
		It follows that $G\left[\bigcup_{q \in S} e_q\right]$ is an induced matching of size $s$ in $G$. 
	\end{proof}

	\begin{theorem}
		For all $s$, the \textsc{2-Precolouring Extension Problem} restricted to 3-uniform linear hypergraphs with no induced matching of size at least $s$ can be solved in polynomial time.
	\end{theorem}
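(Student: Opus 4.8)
The plan is to combine Lemma~\ref{lem:hyperramsey} with the $k=3$, $r=2$ case of Theorem~\ref{NP-Dic} (equivalently, the precoloring-extension analogue of Theorem~\ref{3Bdd2Col}). First I would record a trivial observation: for a $3$-uniform hypergraph, having no induced matching of size at least $s$ is the same as having no induced matching of size exactly $s$, since deleting edges from an induced matching leaves an induced matching (an induced subhypergraph of an induced subhypergraph is induced). So it suffices to give a polynomial-time algorithm for linear $3$-uniform hypergraphs with no induced matching of size $s$, equipped with a partial $2$-coloring $(X,c)$.

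The key point is that such hypergraphs have bounded matching number. By Lemma~\ref{lem:hyperramsey}, there is a constant $s' = s'(s)$ --- depending only on the fixed integer $s$, and one may take $s' = R_3(s,\dots,s)$ with at most $2^{9 \choose 3}$ colors --- such that every $3$-uniform linear hypergraph containing a matching of size $s'$ contains an induced matching of size $s$. Taking the contrapositive, every $3$-uniform linear hypergraph $G$ with no induced matching of size $s$ satisfies $\nu(G) \le s'-1$. Hence the class of inputs in question is contained in the class of $3$-uniform hypergraphs $G$ with $\nu(G) \le s'-1$, a constant bound.

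Finally I would feed $G$ together with its partial $2$-coloring $(X,c)$ into the polynomial-time algorithm guaranteed by Theorem~\ref{NP-Dic} for the \textsc{$3$-Uniform Hypergraph $2$-Precoloring Extension Problem} restricted to hypergraphs with $\nu(G)\le s'-1$: the first bulleted case $s\le r-1$ is irrelevant here, but the second case $k=3$, $r=2$ applies and explicitly covers precoloring extension. Since $s'$ is a constant, the running time is polynomial in $|V(G)|$; and if one wants to verify the input is actually a linear $3$-uniform hypergraph with no induced matching of size $s$, that is itself a polynomial-time check (though for the promise version of the problem it is not even needed, as one can simply run the algorithm of Theorem~\ref{NP-Dic} with parameter $s'-1$).

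I do not expect a genuine obstacle: all the substantive work has already been done, namely the Ramsey-theoretic argument behind Lemma~\ref{lem:hyperramsey} (which converts "no induced matching" into "bounded matching number" in the linear setting) and the $2$-SAT-based argument behind Theorem~\ref{3Bdd2Col}/Theorem~\ref{NP-Dic}. Consequently the proof of this statement is only a few lines: deduce $\nu(G) \le s'-1$ from Lemma~\ref{lem:hyperramsey}, then invoke Theorem~\ref{NP-Dic}.
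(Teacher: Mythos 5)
Your proof is correct and follows essentially the same route as the paper: apply Lemma~\ref{lem:hyperramsey} in contrapositive form to bound $\nu(G)$ by a constant $s'-1$, then invoke the polynomial-time algorithm for $2$-precoloring extension in $3$-uniform hypergraphs with bounded matching number. If anything, your citation of the $k=3$, $r=2$ case of Theorem~\ref{NP-Dic} (i.e.\ the precoloring-extension version of Theorem~\ref{3Bdd2Col}) is the more appropriate reference, since the paper's appeal to Theorem~\ref{kBddrCol-sBdd} would require $\nu(G)\leq r-1=1$, which Lemma~\ref{lem:hyperramsey} does not deliver.
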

	\begin{proof}
		By Theorem \ref{3Bdd2Col} and Lemma \ref{lem:hyperramsey}.
	\end{proof}
	
	Note that from Theorem \ref{kBddrCol-sBdd} and Lemma \ref{lem:hyperramsey}, it also follows that for all $s$ there exists $s'$ such that for all $S > s'$, the \textsc{$S$-Precolouring Extension Problem} restricted to 3-uniform linear hypergraphs with no induced matching of size at least $s$ can be solved in polynomial time. 
	

\subsection{NP-hardness of 3-coloring with bounded matching number}
	In this section, we prove the following result. 
	\begin{theorem}\label{Linear-3ColNP}
		The \textsc{3-Uniform Hypergraph 3-coloring Problem} restricted to linear hypergraphs with $\nu (G)\leq s$ is NP-complete for all $s\geq 532$.
	\end{theorem}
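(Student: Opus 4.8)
The plan is to reduce from a known NP-hard coloring problem while controlling the matching number, using the structural flexibility of linear hypergraphs. The natural starting point is the NP-hardness of \textsc{3-Uniform Hypergraph 3-Coloring} restricted to linear hypergraphs (Phelps and R\"odl), or alternatively \textsc{Graph 3-Coloring} (Karp). Given an arbitrary instance $H$, the goal is to build a linear 3-uniform hypergraph $G$ with $\nu(G) \leq 532$ that is 3-colorable if and only if $H$ is 3-colorable. The obstruction is that the $\ltimes$ operation used in Section~\ref{Sec-NP} destroys linearity: attaching a single vertex $x$ to many edges $e$ of $H$ creates pairs of edges $e \cup \{x\}, e' \cup \{x\}$ sharing $x$ and possibly a vertex of $e \cap e'$, and even in the linear case two such edges share $x$, which is still fine, but edges $e\cup\{x\}$ and $e\cup\{y\}$ share two vertices. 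So a more careful gadget is needed.

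First I would design a ``palette gadget'': a fixed linear 3-uniform hypergraph $P$ on $O(1)$ vertices with $\chi(P) = 3$ and with a distinguished independent set of three vertices $\{a_1, a_2, a_3\}$ such that in every 3-coloring of $P$, the $a_i$ receive all three distinct colors (up to permutation). This forces a genuine ``rainbow'' triple available for comparisons, and since $P$ is fixed and linear, $\nu(P)$ is a constant. Then, to encode the constraint imposed by an edge $e = \{u,v,w\}$ of $H$ (which forbids $u,v,w$ from being monochromatic), I would introduce a small linear gadget $G_e$ on $e$ together with a few fresh private vertices, wired to a shared copy of $P$, that is 3-colorable exactly when $u,v,w$ are not all equal. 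The key point is to make each $G_e$ use only $O(1)$ fresh vertices and to route all of them through the single palette $P$; since $P$ contributes $O(1)$ to $\nu(G)$ and each edge of $G$ meets $V(P)$ — or more precisely, every edge lies within $V(P) \cup (\text{bounded-size private set})$ in a way that any matching hits $V(P)$ a bounded number of times — one gets $\nu(G) = O(1)$. The numerical constant $532$ will come from explicitly counting the vertices of $P$ and the per-edge private vertices and bounding how large a matching can avoid $V(P)$.

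Concretely, the main steps are: (1) construct the linear palette hypergraph $P$ forcing a rainbow triple, verifying $\chi(P)=3$ and that it is linear; (2) construct, for a single 3-uniform edge constraint on three vertices $u,v,w$, a linear gadget $G_{uvw}$ using $P$'s rainbow triple plus a bounded number of private vertices, and prove it is 3-colorable iff $\{u,v,w\}$ is not monochromatic, while remaining linear even when glued to other gadgets (this requires the private vertices of distinct gadgets to be disjoint and the gadget edges to pairwise meet in at most one vertex — so the attachment points in $P$ must be chosen so no two gadget edges reuse the same pair); (3) take the disjoint union over all edges $e$ of $H$ of the gadgets $G_e$, all sharing the single copy of $P$, plus for each vertex of $H$ a structure ensuring it gets a color, and verify linearity of the whole construction; (4) show the resulting $G$ is 3-colorable iff $H$ is; (5) bound $\nu(G)$: every edge of $G$ is either inside $V(P)$ (contributing at most $\lfloor |V(P)|/3 \rfloor$ to any matching) or is an edge of some $G_e$ meeting $V(P)$, so a matching can contain at most $|V(P)|$ edges meeting $V(P)$ plus at most $\lfloor |V(P)|/3\rfloor$ edges inside it — choosing $|V(P)|$ appropriately yields the bound $532$; (6) observe the construction is polynomial-time.

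The hard part will be step (2): engineering a linear gadget that simulates the ``not monochromatic'' constraint on three arbitrary vertices without introducing two edges that share two vertices, and without the gadgets for different edges of $H$ interfering with each other's linearity. The standard trick for graph-to-hypergraph reductions attaches the same auxiliary vertex to many edges, which is exactly what breaks linearity, so I expect to need a ``fan-out'' structure — replacing the shared rainbow triple with many edge-disjoint, vertex-near-disjoint copies of a forcing gadget, each copy private to one edge $e$ of $H$, all of which are forced to agree with $P$'s rainbow triple through a chain of bounded length. Keeping that chain linear while still forcing agreement, and doing so with a universally bounded number of fresh vertices (so that $\nu(G)$ stays bounded by an absolute constant independent of $H$), is the crux; the explicit value $532$ is then just bookkeeping over the sizes of these fixed gadgets.
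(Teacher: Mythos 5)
Your high-level plan matches the shape of the paper's argument (a shared ``rainbow'' palette forced by constant-size linear gadgets, per-edge constraint gadgets attached to the palette, and a matching bound via a constant-size hitting set), but as written it has a genuine gap: the two components you yourself flag as ``the crux'' are exactly the content of the proof, and you leave both unresolved. First, the rainbow-forcing palette gadget is asserted rather than constructed. Building a \emph{linear} 3-uniform hypergraph in which three specified vertices are forced to receive distinct colors is nontrivial; the paper does it in Lemmas \ref{Gadget1} and \ref{Gadget2} via a two-level construction (four labelled copies of $K_4$, then for each of the $4^4$ transversals five further copies of $K_4$), and the correctness argument there is the technical heart of the section. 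Without an explicit gadget, neither the linearity of $G$ nor the constant $532$ (which is $19 \times 28$, coming from a $19$-vertex hitting set in each of $28$ gadget copies) can be established.

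Second, your ``fan-out'' idea does not yet solve the linearity problem, and the paper's actual mechanism is different from what you sketch. Rather than chaining many near-disjoint copies of the forcing gadget, the paper reduces from \textsc{Graph 3-Coloring} on graphs of maximum degree $4$ (Theorem \ref{thm:3colbdd}), properly edge-colors the input graph with $5$ colors via Vizing's theorem, and uses a palette of $10$ vertices per color class ($a^j_{2k-1}, a^j_{2k}$ for $k \in [5]$): an edge of $G^*$ with edge-color $k$ attaches only to the palette vertices indexed $2k-1, 2k$, so two hypergraph edges through a common vertex $x$ of $G^*$ necessarily use distinct palette labels and hence meet only in $x$. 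This bounded-degree-plus-edge-coloring step is precisely what prevents two hyperedges of the form $\{x, s, a\}$ and $\{x, s', a\}$ from sharing the pair $\{x,a\}$, and it is absent from your proposal. Until you (i) exhibit a concrete linear rainbow gadget with a bounded hitting set and (ii) specify a collision-free attachment scheme such as the edge-coloring one, the proposal is a plausible outline rather than a proof.
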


	We use the following theorems.
	\begin{theorem}[Garey, Johnson and Stockmeyer \cite{3ColBddDeg-NP}] \label{thm:3colbdd}
		The \textsc{3-Coloring Problem} restricted to graphs with maximum degree at most $4$, is NP-complete.
	\end{theorem}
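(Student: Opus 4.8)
The plan is to prove both directions of the NP-completeness claim. Membership in NP is immediate: a proper $3$-coloring is a certificate of size $O(|V|)$ that can be checked in linear time. The substance is NP-hardness, which I would establish by a polynomial reduction from \textsc{3-Satisfiability} (well known to be NP-complete). The one genuine difficulty is that the obvious ``formula-to-graph'' encodings create vertices of unbounded degree, both for variables that occur in many clauses and for the global color-reference vertices, so the entire construction has to be engineered so that \emph{every} vertex ends up with degree at most $4$. This degree bookkeeping is exactly where I expect the work to lie; everything else is routine gadgetry.

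The first step is a preprocessing reduction that makes occurrences bounded. Given a $3$-CNF formula, I would replace each variable $x$ occurring $m$ times by $m$ fresh variables $x_1,\dots,x_m$, add equivalence constraints arranged in a cycle $x_1 \Leftrightarrow x_2 \Leftrightarrow \cdots \Leftrightarrow x_m \Leftrightarrow x_1$ (each $\Leftrightarrow$ expressed by clauses over three literals), and let each original occurrence of $x$ use a distinct copy. The resulting formula $\varphi$ is equisatisfiable, has size linear in the input, and has the crucial property that every variable now occurs a bounded number of times (one can arrange every \emph{literal} to occur at most twice). This pushes the ``replication'' work into the SAT layer, where it is cheap, rather than attempting to force many equal color-copies of a single vertex directly --- which is impossible within a degree-$4$ budget, since forcing equality of two vertices already costs each of them degree at least two.

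The second step compiles $\varphi$ into a graph. I would use a palette triangle with colors named $T,F,B$, a variable gadget producing for each variable two literal vertices pinned to $\{T,F\}$ and forced to opposite colors, and, for each clause, the standard $3$-coloring OR-gadget on its (at most three) literal vertices, designed to be $3$-colorable precisely when some literal is colored $T$. Because $\varphi$ has bounded occurrences, each literal vertex meets only $O(1)$ clause gadgets, so if each literal attaches to each gadget by a single edge its degree stays at most $4$. The main obstacle, as flagged, is the color reference: a single global palette adjacent to all gadgets has unbounded degree. I would resolve this by giving each gadget a \emph{local} palette and stitching these local palettes together with bounded-degree propagation gadgets that force them to be color-consistent, again exploiting bounded occurrences so that every reference vertex serves only $O(1)$ gadgets. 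Correctness then follows in both directions by reading off literal colors from a proper $3$-coloring and conversely extending a satisfying assignment gadget-by-gadget. Finally, I would remark that degree $4$ is best possible: by Brooks' theorem every connected graph of maximum degree at most $3$ other than $K_4$ is $3$-colorable, so the maximum-degree-$3$ case is polynomial, which is exactly why the threshold --- and the difficulty --- sits at degree $4$.
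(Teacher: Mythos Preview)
The paper does not prove this statement. It is quoted with attribution to Garey, Johnson and Stockmeyer and then used as a black box in the reduction for Theorem~\ref{Linear-3ColNP}; no proof or proof sketch appears anywhere in the paper. So there is nothing to compare your attempt against on the paper's side.

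As an independent proof of the cited result, your outline is in the right spirit, and you correctly identify that the entire difficulty is the degree bookkeeping. Two comments. First, the step you single out as the obstacle---replacing a single global palette by local palettes joined by ``propagation gadgets''---is also the step you leave least specified. At maximum degree~$4$ the arithmetic is genuinely tight: a color-equality gadget that pins two vertices to the same color already consumes two edges at each endpoint, and your literal vertices are simultaneously spending edges on the variable gadget and on clause attachments. You should exhibit the equality/propagation gadget explicitly and recount degrees afterwards; it is very easy to land at~$5$. Second, your bounded-occurrence preprocessing as written may not give ``every literal at most twice'': each copy $x_i$ picks up one original occurrence plus occurrences from the two equivalence constraints on either side of the cycle, so a naive count overshoots. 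This is fixable (one can appeal to the standard bounded-occurrence 3-SAT result, or be more careful with the cycle), but the sentence as stated is not yet justified.

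A cleaner route---and the one taken in the original Garey--Johnson--Stockmeyer argument---bypasses SAT entirely: start from unrestricted \textsc{3-Coloring}, and replace each vertex of large degree by a path of copies joined by explicit color-equality gadgets, distributing the original neighbors one per copy. This localizes all the degree accounting to a single gadget that you write down once, rather than spreading it across variable, clause, and palette gadgets simultaneously. Either approach can be made to work, but if you pursue yours, the palette-propagation gadget and a full degree table are not optional.
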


	\begin{theorem} [Vizing \cite{Vizing}, Misra and Gries \cite{Vizing-Algo}] \label{thm:vizing}
		There is a $O(mn)$-algorithm for edge-coloring a graph $G$ with $D + 1$ colors, where $D$ is the maximum degree of $G$, $m$ is the number of edges and $n$ is the number of vertices.
	\end{theorem}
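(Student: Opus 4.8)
The plan is to prove the result constructively, following the Misra--Gries refinement of Vizing's original argument: we color the edges of $G$ one at a time, maintaining a proper partial edge-coloring $\phi$ using the palette $[D+1]$, and we show that each previously uncolored edge can be incorporated in $O(n)$ time. Summing over the $m$ edges then yields the claimed $O(mn)$ bound. Since $\phi$ assigns distinct colors to edges sharing a vertex, and every vertex has degree at most $D$ while the palette has $D+1$ colors, every vertex $x$ \emph{misses} at least one color, that is, there is a color of $[D+1]$ appearing on no edge incident with $x$. This surplus of a single color is exactly what drives the whole construction.

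First I would set up the key objects. Fix an uncolored edge $uv$. A \emph{fan} at $u$ starting from $v$ is a maximal sequence $v = v_1, v_2, \dots, v_k$ of distinct neighbours of $u$ such that $uv_1$ is the uncolored edge and, for $1 \le i < k$, the color $\phi(uv_{i+1})$ is missing at $v_i$. Such a fan is built greedily and has length at most $\deg(u) \le n$. I would then choose a color $c$ missing at $u$ and a color $d$ missing at the last fan vertex $v_k$, and consider the maximal walk $P$ starting at $u$ whose edges alternate between the colors $c$ and $d$; because $c$ is missing at $u$, this walk is in fact a simple path, and it has at most $n$ vertices. Swapping the colors $c$ and $d$ along $P$ produces a new proper partial coloring that agrees with $\phi$ outside $P$.

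The heart of the argument --- and the step I expect to be the main obstacle --- is Vizing's lemma, which guarantees that after inverting $P$ there is a fan vertex $w$ such that $(v_1, \dots, w)$ is still a fan and $d$ is missing at $w$; rotating that prefix (resetting $\phi(uv_i) := \phi(uv_{i+1})$ along it) and assigning $d$ to $uw$ then colors the formerly uncolored edge. Establishing the existence of such a $w$ is the delicate part. Intuitively there are two situations: either the swap frees $d$ at $u$, so the entire fan can be closed with $w = v_k$, or the path $P$ terminated at $v_k$ and the swap instead frees $d$ at an earlier fan vertex. Verifying that one of these always occurs, and that the rotation keeps the coloring proper, relies on the maximality of the fan (which constrains where the colors $\phi(uv_{i+1})$ can recur), on the fact that inverting $P$ alters the missing/used status of $c$ and $d$ only at the two endpoints of $P$, and on checking that the endpoint of $P$ cannot interfere with the chosen $w$ in a way that destroys properness.

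Finally I would assemble the running-time bound. With each vertex storing an indicator array of the colors currently appearing on its incident edges, a missing color at any vertex is located in $O(D) = O(n)$ time, the fan is constructed in $O(n)$ time, inverting the alternating path $P$ touches $O(n)$ edges, and rotating the fan touches $O(n)$ edges. Hence incorporating one edge costs $O(n)$, and the total cost over all $m$ edges is $O(mn)$, as required.
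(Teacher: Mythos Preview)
The paper does not prove this theorem at all: it is quoted as a known result with citations to Vizing and to Misra--Gries, and is used only as a black box inside the NP-hardness reduction of Theorem~\ref{Linear-3ColNP} (to produce a proper $5$-edge-coloring of a degree-$4$ graph in polynomial time). So there is no ``paper's own proof'' to compare against.

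Your plan is the standard Misra--Gries argument, which is precisely what the cited reference contains, and the overall shape is correct: build a maximal fan at $u$, pick colors $c$ missing at $u$ and $d$ missing at $v_k$, flip the $cd$-alternating path from $u$, then rotate a suitable fan prefix and assign $d$. The running-time accounting is also right. One small imprecision worth tightening when you write it out in full: the case analysis after the swap is governed not by whether the path ends at $v_k$, but by whether it passes through the unique fan vertex $v_j$ with $\phi(uv_j)=d$ (such a $j$ exists by maximality of the fan whenever $d$ is not already missing at $u$). If the path avoids $v_j$, then after the swap $d$ is missing at both $u$ and $v_k$ and the full fan rotates; if the path hits $v_j$, then after the swap $d$ is missing at $v_{j-1}$ and you rotate only the prefix $(v_1,\dots,v_{j-1})$. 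Framing it this way makes the ``properness after rotation'' check clean.
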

	
	Let us introduce a new way to describe 3-uniform hypergraphs. Instead of using edges with three vertices, we use 2-edges with labeled vertices. Given a graph $G$, we say a function $l:E(G)\rightarrow V(G)$ with $l(e)\notin e$ for all $e\in E(G)$ is a \emph{labeling} of $G$. The vertex $l(e)$ is called the \emph{label} of $e$, and the edge $e$ is a \emph{labeled edge}. 
	
	For a linear 3-uniform hypergraph $G$, let $l:E(G)\rightarrow V(G)$ be a function with $l(e)\in e$ for all $e\in E(G)$. Let $G'$ be the graph with vertex set $V(G)$ and edge set $\{\{e\setminus \{l(e)\}:e\in E(G)\}$, and let $l'(e\setminus \{l(e)\}) = l(e)$. Since $G$ is linear, each edge of $G'$ corresponds to a unique edge of $G$, and thus $l'$ is well-defined. We call $(G',l')$ a \emph{labeled graph representation} of $G$. Notice that with a labeled graph representation, we can reconstruct the corresponding linear 3-uniform hypergraph.
	
	In this section, all of the figures of 3-uniform hypergraphs are drawn using the labeled graph representation. 
	
	The following two lemmas give constructions for gadgets we use in our NP-hardness reduction. The existence of similar gadgets in 3-uniform linear hypergraphs was first proved in \cite{Linear&gadget}. Here we give an explicit construction to obtain a precise bound for the matching number. The construction is shown in Figure \ref{Picture-Gadget1}. 
	\begin{lemma} \label{Gadget1}	
		There is a linear 3-uniform hypergraph $G_1$ with three specified vertices $a, b, c$ with the following properties:
		\begin{itemize}
			\item For every 3-coloring $f$ of $G_1$, either $f(a), f(b), f(c)$ are all distinct, or $f(a)=f(b)=f(c)$.
			\item There is a 3-coloring $f'$ of $G_1$ with $f'(a), f'(b), f'(c)$ all distinct.
			\item There is a set $Z \subseteq V(G_1)$ with $|Z|\leq 19 $ such that $G_1 \setminus Z$ has no edges, and $a,b,c\in Z$.
			\item No edge $e$ of $G_1$ contains more than one of the vertices $a,b,c$.
		\end{itemize}
	\end{lemma}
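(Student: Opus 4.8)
The plan is to exhibit $G_1$ explicitly (the labeled graph representation is Figure \ref{Picture-Gadget1}) and then verify the four bullet points directly. The starting point is a reformulation of the first property: identifying the three colors with $\mathbb{Z}_3=\{0,1,2\}$ (equivalently $\{1,2,3\}$), the condition ``$f(a),f(b),f(c)$ are all equal or all distinct'' is \emph{exactly} the single linear equation $f(a)+f(b)+f(c)\equiv 0\pmod 3$: an all-equal triple has sum $\equiv 0$, an all-distinct triple has sum $0+1+2\equiv 0$, and any triple with exactly two equal colors has nonzero sum. So the lemma reduces to building a linear $3$-uniform hypergraph $G_1$ containing $a,b,c$, with no edge meeting two of $\{a,b,c\}$, such that (a) the restrictions to $\{a,b,c\}$ of the proper $3$-colorings of $G_1$ are precisely the nine assignments with $f(a)+f(b)+f(c)\equiv 0$, and (b) $G_1$ has a transversal (a set meeting every edge) of size at most $19$ containing $a,b,c$.

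To build such a gadget I would work in the labeled graph representation and assemble $G_1$ from a bounded number of small linear sub-gadgets, glued so that any two pieces share at most one vertex (which keeps the whole hypergraph linear). One natural route: first build a ``reference'' sub-gadget on a few vertices $t_1,t_2,t_3$ whose proper colorings force $f(t_1),f(t_2),f(t_3)$ to be a permutation of the three colors; then, for each of $a,b,c$, attach a ``linking'' sub-gadget tying it to the reference so that, reading colors through the reference as elements of $\mathbb{Z}_3$, the three contributions are forced to sum to $0$; finally add one edge on the ``far'' vertices $a',b',c'$ (none of which coincides with $a,b,c$) to seal the constraint. Each sub-gadget should be designed to admit many colorings (so that the rainbow extension below exists) while being rigid enough to carry the $\mathbb{Z}_3$ bookkeeping. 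Figure \ref{Picture-Gadget1} records one such assembly.

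With the figure fixed, the verification has four parts. First, linearity and ``no edge contains two of $a,b,c$'' are read off the construction. Second, for the existence of $f'$ with $f'(a),f'(b),f'(c)$ distinct, set $f'(a)=1$, $f'(b)=2$, $f'(c)=3$ and extend through the sub-gadgets; this extension exists because each piece was built to admit it (equivalently, $(1,2,3)$ has sum $\equiv 0$). Third, the crux: every proper $3$-coloring $f$ of $G_1$ has $(f(a),f(b),f(c))$ all equal or all distinct. After permuting colors assume $f(a)=1$; then one propagates forced colors through the sub-gadgets, using the only available deduction in a $3$-uniform hypergraph — a monochromatic pair inside an edge forces the third vertex to avoid that color — and deduces $f(a)+f(b)+f(c)\equiv 0\pmod 3$, hence the claim. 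Since $G_1$ has $O(1)$ vertices this case analysis is finite and can, if desired, be discharged by computer. Fourth, let $Z$ consist of $\{a,b,c\}$ together with the ``hub'' vertices of each sub-gadget and $\{a',b',c'\}$; by construction every edge of $G_1$ is incident to $Z$, so $G_1\setminus Z$ has no edges, and the sub-gadgets are chosen so that $|Z|\le 19$.

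The hard part is designing the figure so that all four requirements hold simultaneously. A linear $3$-uniform edge only imposes the weak constraint ``not monochromatic,'' so obtaining rigid behavior — ultimately the exact equation $f(a)+f(b)+f(c)\equiv 0$ — requires a careful combination of edges, and linearity rules out the dense ``triangle'' tricks that make this easy for graphs. Moreover the transversal bound $|Z|\le 19$ (which is what later bounds the matching number in the reduction of Theorem \ref{Linear-3ColNP}, giving $\nu(G)\le 532$) tightly limits how many auxiliary hub vertices the sub-gadgets may use, so each piece must be simultaneously rigid, small, and colorable. Once a valid $G_1$ is on the table, the four verifications above are routine, if a little tedious; the creativity is entirely in the construction.
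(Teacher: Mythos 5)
There is a genuine gap: your proposal never actually constructs $G_1$. The entire mathematical content of Lemma \ref{Gadget1} is the existence of a concrete gadget, and your write-up replaces it with a design brief (``a reference sub-gadget \dots linking sub-gadgets \dots Figure \ref{Picture-Gadget1} records one such assembly'') followed by the admission that ``the creativity is entirely in the construction.'' The $\mathbb{Z}_3$ reformulation of the first property ($f(a)+f(b)+f(c)\equiv 0 \pmod 3$ iff all equal or all distinct) is correct and a nice way to phrase the target, and the four-part verification plan is the right shape, but without an explicit hypergraph there is nothing to verify. Worse, the architecture you sketch is at risk of circularity: a linear $3$-uniform sub-gadget whose every $3$-coloring makes a fixed triple $t_1,t_2,t_3$ rainbow is essentially Lemma \ref{Gadget2}, which the paper obtains \emph{from} Lemma \ref{Gadget1} by adding the edge $\{a,b,c\}$; you cannot assume such a ``reference'' piece as a primitive when building $G_1$ itself.

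For comparison, the paper's actual construction works differently from your sketch. It attaches four copies of $K_4$ (call them $H_1,\dots,H_4$) to $a,b,c$ via the labeling so that each $K_4$'s six edges are labeled $a,a,b,b,c,c$ in a perfect-matching pattern; a short case analysis shows that if, say, $f(a)=f(b)=1$ and $f(c)=2$, then each $H_i$ must contain a vertex colored $3$. Then, for \emph{every} $4$-tuple $T$ of one vertex from each $H_i$ (all $4^4$ of them), it adds a cluster of five further $K_4$'s wired so that if all four chosen vertices are colored $3$, some edge becomes monochromatic. This brute-force quantification over tuples is what turns the existential conclusion (``each $H_i$ has a vertex colored $3$'') into a contradiction, and the transversal $Z=\{a,b,c\}\cup V(H_1)\cup\dots\cup V(H_4)$ of size $19$ works because every edge of every cluster is labeled by a vertex of $Z$. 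None of this is recoverable from your outline, so the proposal cannot be counted as a proof of the lemma.
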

	\begin{proof}
		\setcounter{tbox}{0}
		We want to define $G_1$ using the labeled graph representation $(G_1',l)$. First, we create three vertices $a,b,c$. Then we create 4 copies of $K_4$, say $H_1, H_2, H_3, H_4$. For $i\in [4]$, let $V(H_i)=\{s_i,t_i,u_i,v_i\}$. We define the labeling $l(s_it_i)= l(u_iv_i)= a$, $l(s_iu_i)=l(t_iv_i)=b$, and $l(s_iv_i)=l(t_iu_i)=c$.
		
		Let $S=V(H_1)\times V(H_2)\times V(H_3)\times V(H_4)$. For every 4-tuple $T=(x,y,z,w) \in S$, we create 5 new copies of $K_4$, say $H_0^T, H_1^T, H_2^T, H_3^T, H_4^T$. Let $V(H_i^T)=\{s_i^T, t_i^T, u_i^T, v_i^T\}$ for $i\in [4]$, and $V(H_0^T)=\{r_1^T, r_2^T, r_3^T, r_4^T\}$. We define the labeling $l(s_i^T t_i^T)= l(u_i^T v_i^T)= a$, $l(s_i^T u_i^T)= l(t_i^T v_i^T)=b$ and $l(s_i^T v_i^T)= l(t_i^T u_i^T)=c$ for $i\in [4]$, and $l(r_1^T r_2^T)= l(r_3^T r_4^T)=a$, $l(r_1^T r_3^T)= l(r_2^T r_4^T)=b$ and $l(r_1^T r_4^T)= l(r_2^T r_3^T)=c$. For each $i\in [4]$, we add edges $s_i^T r_i^T$ with $l(s_i^T r_i^T)=x$, $t_i^T r_i^T$ with $l(t_i^T r_i^T)=y$, $u_i^T r_i^T$ with $l(u_i^T r_i^T)=z$ and $v_i^T r_i^T$ with $l(v_i^T r_i^T)=w$.
		
		Let $V(G_1')=\{a,b,c\}\cup (\cup_{i\in [4]} V(H_i))\cup (\cup_{T\in S} \cup_{i=0}^4 V(H_i^T))$, and $E(G_1')$ be the set of all labeled edges defined above. By the construction, the function $l$ defined above is a labeling of $G_1'$. Notice that there is no edge incident to more than one of the vertices $a,b,c$, and $l(V(G_1'))=\{a,b,c\}\cup (\cup_{i\in [4]} V(H_i))$. Thus, by taking $Z=l(V(G_1'))$, we have $|Z|\leq 19 $ and $a,b,c\in Z$; so $Z$ satisfies the third property of the lemma. We now prove the other properties. 
		
		\setcounter{tbox}{0}		
		\tbox{\label{eq:linear}The 3-uniform hypergraph $G_1$ is linear.}
		    
		    Let $X_1 = \{a, b, c\}$, $X_2 = (\cup_{i\in [4]} V(H_i))$ and $X_3 = (\cup_{T\in S} \cup_{i=0}^4 V(H_i^T))$. From the construction, it follows that for every edge $e$ of $G_1$, there exist $i, j \in [3]$ with $i < j$ such that $e$ contains one vertex of $X_i$ and two vertices of $X_j$ and with $e \cap X_j \in E(G_1')$ (and therefore, $\{l(e \cap X_j)\} = e \cap X_i$). 
		   
		    Suppose for a contradiction that there exist distinct $e, e' \in E(G_1)$ with $|e \cap e'| = 2$. Let $j, j' \in [3]$ such that $|e \cap X_j| = 2$ and $|e' \cap X_{j'}| = 2$. It follows that $j = j'$. Since $G_1'$ is simple, we have $e \cap X_j \neq e' \cap X_j$, and so $e \setminus X_j = e' \setminus X_j = \{l(e \cap X_j)\} = \{l(e' \cap X_j)\}$. But in $G_1'$, every two edges with the same label are not incident to a common vertex, a contradiction. We conclude that $G_1$ is linear. This proves \eqref{eq:linear}. 
		
		\tbox{\label{eq:distinct}There is a 3-coloring $f'$ of $G_1$ with $f'(a), f'(b), f'(c)$ all distinct.}

			We define a function $f':(V(G_1))\rightarrow [3]$ as follows. Let $f'(a)=1$, $f'(b)=2$ and $f'(c)=3$. 			
			For each $i\in[4]$, let $f'(s_i)= f'(t_i)=2$ and $f'(u_i)=f'(v_i)=3$. Since $l(s_it_i)=l(u_iv_i)=a$, the edges of $G_1$ corresponding to labeled edges in $G_1'[V(H_i)]$ are not monochromatic.
			
			For each $T\in S$ and each $i\in[4]$, let $f'(s_i^T)=f'(u_i^T)=1$,  $f'(t_i^T)=f'(v_i^T)=3$, $f'(r_1^T)=f'(r_4^T)=1$, and $f'(r_2^T)=f'(r_3^T)=2$. 
			For $i\in [4]$, no vertex $v\in V(H_i^T)$ has $f'(v)=2$, and no edge between $V(H_i^T)$ and $V(H_0^T)$ is labeled $a$. So there is no monochromatic edge $e$ in $G_1$ with $e \cap \cup_{i=0}^4 V(H_i^T) \neq \emptyset$. Therefore, the function $f'$ is a 3-coloring of $G_1$. This proves \eqref{eq:distinct}. 
			
		\tbox{\label{eq:equal}For each 3-coloring $f$ of $G_1$, either $f(a), f(b), f(c)$ are all distinct, or $f(a)=f(b)=f(c)$.}

			Assume for a contradiction that, without loss of generality, there is a 3-coloring $f$ of $G_1$ such that $f(a)=f(b)$. Without loss of generality, we may assume that $f(a)=f(b)=1$ and $f(c)=2$.
			
			We claim that there exists $x_0\in V(H_1)$ such that $f(x_0)=3$. Assume for a contradiction that every vertex $v\in V(H_1)$ has $f(v)\neq 3$. Since $l(s_1v_1)=c$ and $f(c)=2$, without loss of generality let  $f(s_1)\neq 2$. So $f(s_1)=1$. Since $l(s_1t_1)=a$, $l(s_1u_1)=b$ and $f(a)=f(b)=1$, we have $f(t_1)=f(u_1)=2$. But the edge $t_1u_1$ is labeled $c$ and $f(c)=2$, the corresponding edge $\{t_1,u_1,c\}$ of $G_1$ is monochromatic, which violates the condition that $f$ is a 3-coloring of $G_1$. 
			
			A similar argument holds for every $H_i$ with $i\in \{2,3,4\}$, and $H_j^T$ with $T\in S$ and $j\in \{0,1,\dots,4\}$. There exist vertices $y_0\in V(H_2)$, $z_0\in V(H_3)$, $w_0\in V(H_4)$ such that $f(y_0)=f(z_0)=f(w_0)=3$. Let $T=(x_0,y_0,z_0,w_0)$.
			By the argument above, there is a $j \in [4]$ such that $f(r_j) = 3$. 	Since there is a vertex $v\in V(H_j^T)$ with $f(v)=3$, and $f(l(vr_j))=3$ (because $l(vr_j) \in \{x_0,y_0,z_0,w_0\}$), the edge $\{v,r_j,l(vr_j)\}$ of $G_1$ is monochromatic, which contradicts the condition that $f$ is a 3-coloring of $G_1$. This proves \eqref{eq:equal}. 		
	\end{proof}
	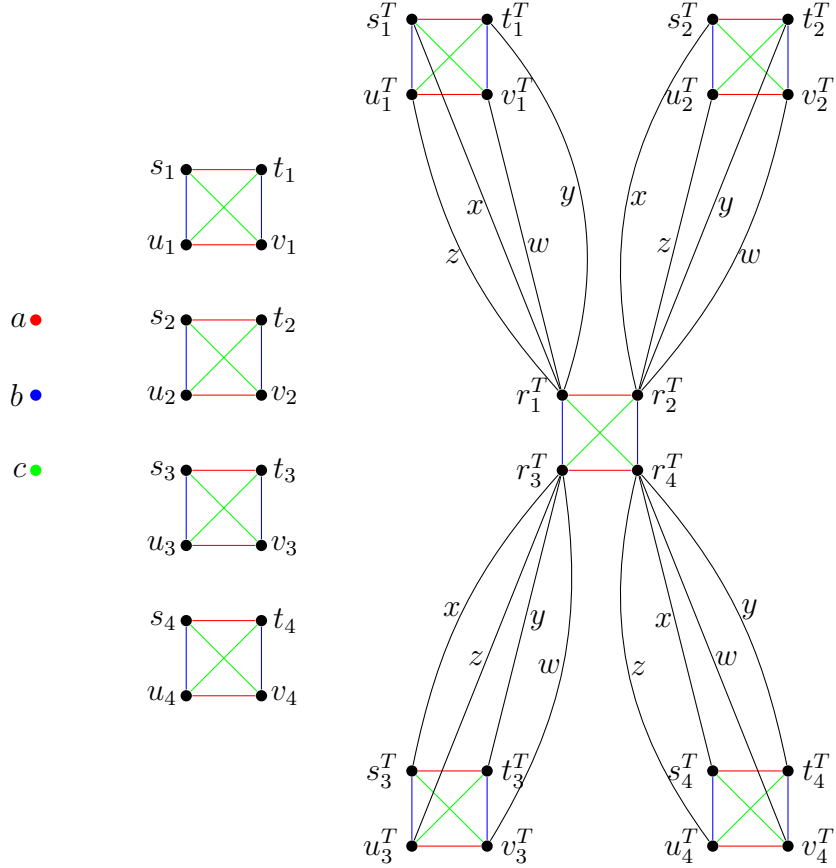
\begin{figure}[!hptb]
		\centering 
		\begin{tikzpicture}[scale=1]
		
		\node [label=left: $a$, red] (a) at (0, 5){};
		\node [label=left: $b$, blue] (b) at (0, 4){};
		\node [label=left: $c$, green] (c) at (0, 3){};
		
		\node [label=left: $s_1$] (s1) at (2, 7){};
		\node [label=right: $t_1$] (t1) at (3, 7){};
		\node [label=left: $u_1$] (u1) at (2, 6){};
		\node [label=right: $v_1$] (v1) at (3, 6){};
		\draw [red] (s1) -- (t1) ;
		\draw [red] (u1) -- (v1) ;
		\draw [blue] (s1) -- (u1) ;
		\draw [blue] (t1) -- (v1) ;
		\draw [green] (s1) -- (v1) ;
		\draw [green] (t1) -- (u1) ;
		
		\node [label=left: $s_2$] (s2) at (2, 5){};
		\node [label=right: $t_2$] (t2) at (3, 5){};
		\node [label=left: $u_2$] (u2) at (2, 4){};
		\node [label=right: $v_2$] (v2) at (3, 4){};
		\draw [red] (s2) -- (t2) ;
		\draw [red] (u2) -- (v2) ;
		\draw [blue] (s2) -- (u2) ;
		\draw [blue] (t2) -- (v2) ;
		\draw [green] (s2) -- (v2) ;
		\draw [green] (t2) -- (u2) ;
		
		\node [label=left: $s_3$] (s3) at (2, 3){};
		\node [label=right: $t_3$] (t3) at (3, 3){};
		\node [label=left: $u_3$] (u3) at (2, 2){};
		\node [label=right: $v_3$] (v3) at (3, 2){};
		\draw [red] (s3) -- (t3) ;
		\draw [red] (u3) -- (v3) ;
		\draw [blue] (s3) -- (u3) ;
		\draw [blue] (t3) -- (v3) ;
		\draw [green] (s3) -- (v3) ;
		\draw [green] (t3) -- (u3) ;
		
		\node [label=left: $s_4$] (s4) at (2, 1){};
		\node [label=right: $t_4$] (t4) at (3, 1){};
		\node [label=left: $u_4$] (u4) at (2, 0){};
		\node [label=right: $v_4$] (v4) at (3, 0){};
		\draw [red] (s4) -- (t4) ;
		\draw [red] (u4) -- (v4) ;
		\draw [blue] (s4) -- (u4) ;
		\draw [blue] (t4) -- (v4) ;
		\draw [green] (s4) -- (v4) ;
		\draw [green] (t4) -- (u4) ;
		
		\node [label=left: $s_1^T$] (s1T) at (5,9){};
		\node [label=right: $t_1^T$] (t1T) at (6,9){};
		\node [label=left: $u_1^T$] (u1T) at (5,8){};
		\node [label=right: $v_1^T$] (v1T) at (6,8){};
		\draw [red] (s1T) -- (t1T) ;
		\draw [red] (u1T) -- (v1T) ;
		\draw [blue] (s1T) -- (u1T) ;
		\draw [blue] (t1T) -- (v1T) ;
		\draw [green] (s1T) -- (v1T) ;
		\draw [green] (t1T) -- (u1T) ;
		
		\node [label=left: $s_2^T$] (s2T) at (9,9){};
		\node [label=right: $t_2^T$] (t2T) at (10,9){};
		\node [label=left: $u_2^T$] (u2T) at (9,8){};
		\node [label=right: $v_2^T$] (v2T) at (10,8){};
		\draw [red] (s2T) -- (t2T) ;
		\draw [red] (u2T) -- (v2T) ;
		\draw [blue] (s2T) -- (u2T) ;
		\draw [blue] (t2T) -- (v2T) ;
		\draw [green] (s2T) -- (v2T) ;
		\draw [green] (t2T) -- (u2T) ;
		
		\node [label=left: $s_3^T$] (s3T) at (5,-1){};
		\node [label=right: $t_3^T$] (t3T) at (6,-1){};
		\node [label=left: $u_3^T$] (u3T) at (5,-2){};
		\node [label=right: $v_3^T$] (v3T) at (6,-2){};
		\draw [red] (s3T) -- (t3T) ;
		\draw [red] (u3T) -- (v3T) ;
		\draw [blue] (s3T) -- (u3T);
		\draw [blue] (t3T) -- (v3T);
		\draw [green] (s3T) -- (v3T) ;
		\draw [green] (t3T) -- (u3T) ;
		
		\node [label=left: $s_4^T$] (s4T) at (9,-1){};
		\node [label=right: $t_4^T$] (t4T) at (10,-1){};
		\node [label=left: $u_4^T$] (u4T) at (9,-2){};
		\node [label=right: $v_4^T$] (v4T) at (10,-2){};
		\draw [red] (s4T) -- (t4T) ;
		\draw [red] (u4T) -- (v4T) ;
		\draw [blue] (s4T) -- (u4T);
		\draw [blue] (t4T) -- (v4T);
		\draw [green] (s4T) -- (v4T) ;
		\draw [green] (t4T) -- (u4T) ;
		
		\node [label=left: $r_1^T$] (r1T) at (7,4){};
		\node [label=right: $r_2^T$] (r2T) at (8,4){};
		\node [label=left: $r_3^T$] (r3T) at (7,3){};
		\node [label=right: $r_4^T$] (r4T) at (8,3){};
		\draw [red] (r1T) -- (r2T) ;
		\draw [red] (r3T) -- (r4T) ;
		\draw [blue] (r1T) -- (r3T) ;
		\draw [blue] (r2T) -- (r4T) ;
		\draw [green] (r1T) -- (r4T) ;
		\draw [green] (r2T) -- (r3T) ;
		
		\draw (r1T) -- (s1T) node[draw=none,fill=none, midway, left] {$x$} ;
		\draw (r1T) edge[bend right =30]  node[draw=none,fill=none, midway,left] {$y$} (t1T) ;
		\draw (r1T) edge[bend left =15]  node[draw=none,fill=none, midway, left] {$z$} (u1T);
		\draw (r1T) -- (v1T) node[draw=none,fill=none, midway, right] {$w$} ;
		
		\draw (r2T) edge[bend left =25]  node[draw=none,fill=none, midway, right] {$x$} (s2T);
		\draw (r2T) -- (t2T) node[draw=none,fill=none, midway,right] {$y$} ;
		\draw (r2T) -- (u2T) node[draw=none,fill=none, midway, left] {$z$} ;
		\draw (r2T) edge[bend right =15]  node[draw=none,fill=none, midway, right] {$w$} (v2T);
		
		\draw (r3T) edge[bend right =15]  node[draw=none,fill=none, midway, left] {$x$} (s3T);
		\draw (r3T) -- (t3T) node[draw=none,fill=none, midway, right] {$y$} ;
		\draw (r3T) -- (u3T) node[draw=none,fill=none, midway, left] {$z$} ;
		\draw (r3T) edge[bend left =20]   node[draw=none,fill=none, midway, left] {$w$} (v3T);
		
		\draw (r4T) -- (s4T) node[draw=none,fill=none, midway, left] {$x$} ;
		\draw (r4T) edge[bend left =15]   node[draw=none,fill=none, midway,right] {$y$} (t4T);
		\draw (r4T) edge[bend right =25]  node[draw=none,fill=none, midway, right] {$z$} (u4T);
		\draw (r4T) -- (v4T) node[draw=none,fill=none, midway, right] {$w$} ;
	\end{tikzpicture}
		\caption{The construction from Lemma \ref{Gadget1}. The colored edge means the label of this edge is the vertex of the corresponding color. The right-hand side shows $H_0^T, \dots, H_4^T$ for $T = (x, y, z, w)$. }\label{Picture-Gadget1}
	\end{figure}

	\begin{lemma} \label{Gadget2}
			There is a linear 3-uniform hypergraph $G_2$ with specified vertices $a, b, c$ with the following properties:
		\begin{itemize}
			\item For every 3-coloring $f$ of $G_2$, we have $f(a), f(b), f(c)$ all distinct.
			\item $G_2$ is 3-colorable.
			\item There is a set $Z \subseteq V(G_2)$ with $|Z|\leq 19 $ such that $G_2 \setminus Z$ has no edges, and $a,b,c\in Z$.
			\item At most one edge of $G_2$ contains more than one of the vertices $a,b,c$.
		\end{itemize}
	\end{lemma}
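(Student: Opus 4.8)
The plan is to obtain $G_2$ from the hypergraph $G_1$ of Lemma \ref{Gadget1} by adding a single new edge on the specified vertices. Concretely, I would set $V(G_2) = V(G_1)$ and $E(G_2) = E(G_1) \cup \{\{a,b,c\}\}$, keeping the same specified vertices $a, b, c$. This new edge is precisely the one edge permitted by the fourth bullet of the statement, and its only purpose is to rule out the ``all equal'' case that Lemma \ref{Gadget1} leaves open.

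First I would verify linearity. By the fourth property of Lemma \ref{Gadget1}, no edge of $G_1$ contains more than one of $a, b, c$, so no edge of $G_1$ meets the new edge $\{a,b,c\}$ in two vertices; since $G_1$ is linear, so is $G_2$. For the third property, I would take the same set $Z$ furnished by Lemma \ref{Gadget1}: then $a, b, c \in Z$ and $|Z| \leq 19$, and because $\{a,b,c\} \subseteq Z$ the only new edge is deleted, so $G_2 \setminus Z$ still has no edges. The fourth property of $G_2$ is immediate: the only edge meeting $\{a,b,c\}$ in more than one vertex is the new edge itself.

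For the coloring conditions, I would argue as follows. By the second property of Lemma \ref{Gadget1} there is a $3$-coloring $f'$ of $G_1$ with $f'(a), f'(b), f'(c)$ pairwise distinct; then the new edge $\{a,b,c\}$ is non-monochromatic under $f'$, so $f'$ is a $3$-coloring of $G_2$, and $G_2$ is $3$-colorable. Conversely, if $f$ is any $3$-coloring of $G_2$, then its restriction is a $3$-coloring of $G_1$, so by the first property of Lemma \ref{Gadget1} either $f(a), f(b), f(c)$ are pairwise distinct or $f(a) = f(b) = f(c)$; the latter would make $\{a,b,c\} \in E(G_2)$ monochromatic, a contradiction. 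Hence $f(a), f(b), f(c)$ are pairwise distinct.

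There is essentially no serious obstacle: the construction is a one-line modification of Gadget 1 and all four checks are short. The only step that genuinely uses the structure of $G_1$ (rather than being purely formal) is linearity, and this is exactly the reason Lemma \ref{Gadget1} was stated with its fourth bullet; it is worth double-checking that the bound $|Z| \leq 19$ transfers unchanged, which it does since no new vertices are introduced.
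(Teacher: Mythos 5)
Your proposal is correct and follows exactly the paper's approach: the paper also obtains $G_2$ from $G_1$ by adding the single edge $\{a,b,c\}$ and deduces all four properties directly from Lemma \ref{Gadget1}. Your write-up simply spells out the verification (linearity, reuse of $Z$, and the two coloring conditions) that the paper leaves implicit.
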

	\begin{proof}
		Let $G_2$ be obtained from $G_1$ defined in Lemma \ref{Gadget1} by adding the edge $\{a,b,c\}$. The result follows immediately from Lemma \ref{Gadget1}.
	\end{proof}
	
	\begin{figure}[!hptb]
		\centering 
		\begin{tikzpicture}[scale=1]
			\node [label=left: $x$] (x) at (-5, 3.75){};
			\node [label=right: $y$] (y) at (8, 3.75){};
			
			\node [label=above: $s_1^{xy}$] (s1) at (0,1.5){};
			\node [label=above: $t_1^{xy}$] (t1) at (3,1.5){};
			\node [label=below: $u_1^{xy}$] (u1) at (0,0){};
			\node [label=below: $v_1^{xy}$] (v1) at (3,0){};
			\draw (s1) -- (t1) ;
			\node [draw=none, fill=none] () at (1.5,1.8) {$a_{2k-1}^2$};
			\draw (u1) -- (v1) ;
			\node [draw=none, fill=none] () at (1.5,-0.3) {$a_{2k}^2$};
			\draw (s1) -- node[draw=none,fill=none, midway,left] {$a_{2k-1}^3$} (u1) ;
			\draw (t1) -- node[draw=none,fill=none, midway,right] {$a_{2k-1}^3$} (v1) ;
			\draw (s1) -- (v1) ;
			\node [draw=none, fill=none] () at (0.5,0.9) {$a_{2k}^3$};
			\draw (t1) -- (u1) ;
			\node [draw=none, fill=none] () at (2.5,0.9) {$a_{2k}^3$};
			\draw (x) -- (s1) ;
			\node [draw=none, fill=none] () at (-2.2,2.2) {$a_{2k-1}^1$};
			\draw (x) edge [bend right = 10] (u1) ;
			\node [draw=none, fill=none] () at (-2.2,0.9) {$a_{2k}^1$};
			\draw (y) -- (t1) ;
			\node [draw=none, fill=none] () at (5.3,2.1) {$a_{2k-1}^1$};
			\draw (y) edge [bend left = 10] (v1) ;
			\node [draw=none, fill=none] () at (5.3,1.0) {$a_{2k}^1$};
			
			\node [label=above: $s_2^{xy}$] (s2) at (0,4.5){};
			\node [label=above: $t_2^{xy}$] (t2) at (3,4.5){};
			\node [label=below: $u_2^{xy}$] (u2) at (0,3){};
			\node [label=below: $v_2^{xy}$] (v2) at (3,3){};
			\draw (s2) -- (t2) ;
			\node [draw=none, fill=none] () at (1.5,4.8) {$a_{2k-1}^3$};
			\draw (u2) -- (v2) ;
			\node [draw=none, fill=none] () at (1.5,2.7) {$a_{2k}^3$};
			\draw (s2) -- node[draw=none,fill=none, midway,left] {$a_{2k-1}^1$} (u2) ;
			\draw (t2) -- node[draw=none,fill=none, midway,right] {$a_{2k-1}^1$} (v2) ;
			\draw (s2) -- (v2) ;
			\node [draw=none, fill=none] () at (0.5,3.9) {$a_{2k}^1$};
			\draw (t2) -- (u2) ;
			\node [draw=none, fill=none] () at (2.5,3.9) {$a_{2k}^1$};
			\draw (x) -- (s2) ;
			\node [draw=none, fill=none] () at (-2.2, 4.5) {$a_{2k-1}^2$};
			\draw (x) -- (u2) ;
			\node [draw=none, fill=none] () at (-2.2,3) {$a_{2k}^2$};
			\draw (y) -- (t2) ;
			\node [draw=none, fill=none] () at (5.3,4.5) {$a_{2k-1}^2$};
			\draw (y) -- (v2) ;
			\node [draw=none, fill=none] () at (5.3,3) {$a_{2k}^2$};
			
			\node [label=above: $s_3^{xy}$] (s3) at (0,7.5){};
			\node [label=above: $t_3^{xy}$] (t3) at (3,7.5){};
			\node [label=below: $u_3^{xy}$] (u3) at (0,6){};
			\node [label=below: $v_3^{xy}$] (v3) at (3,6){};
			\draw (s3) -- (t3) ;
			\node [draw=none, fill=none] () at (1.5,7.8) {$a_{2k-1}^1$};
			\draw (u3) -- (v3) ;
			\node [draw=none, fill=none] () at (1.5, 5.7) {$a_{2k}^1$};
			\draw (s3) -- node[draw=none,fill=none, midway,left] {$a_{2k-1}^2$} (u3) ;
			\draw (t3) -- node[draw=none,fill=none, midway,right] {$a_{2k-1}^2$} (v3) ;
			\draw (s3) -- (v3) ;
			\node [draw=none, fill=none] () at (0.5,6.9) {$a_{2k}^2$};
			\draw (t3) -- (u3) ;
			\node [draw=none, fill=none] () at (2.5,6.9) {$a_{2k}^2$};
			\draw (x) edge [bend left = 10]  (s3) ;
			\node [draw=none, fill=none] () at (-2.2, 6.8) {$a_{2k-1}^3$};
			\draw (x) -- (u3) ;
			\node [draw=none, fill=none] () at (-2.2,5.5) {$a_{2k}^3$};
			\draw (y) edge [bend right = 10]  (t3) ;
			\node [draw=none, fill=none] () at (5.3,6.7) {$a_{2k-1}^3$};
			\draw (y) -- (v3) ;
			\node [draw=none, fill=none] () at (5.3,5.4) {$a_{2k}^3$};

		\end{tikzpicture}
		\caption{The construction of $H^{xy}_3$, $H^{xy}_2$, $H^{xy}_1$ (top to bottom) for an edge $xy$ with $f'(xy)=k$. }\label{Picture-Edge}
	\end{figure}
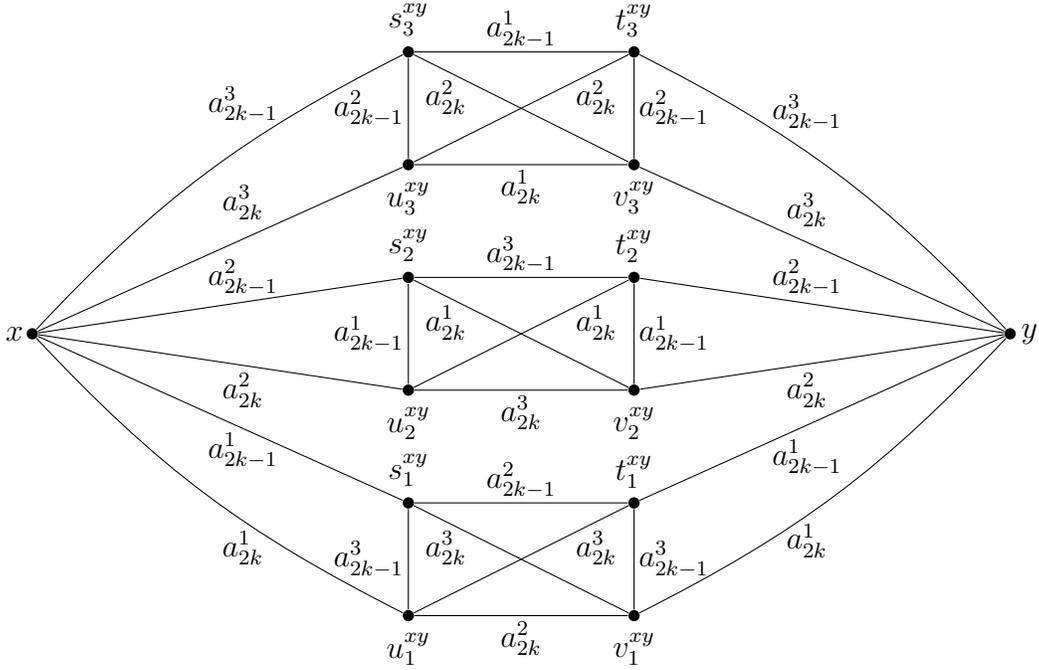
	Now we are ready to prove Theorem \ref{Linear-3ColNP}.
	\begin{proof}[Proof of Theorem \ref{Linear-3ColNP}]
		\setcounter{tbox}{0}
		
		We give an NP-hardness reduction from the \textsc{Graph 3-Coloring Problem} restricted to graphs with maximum degree at most 4, which is NP-hard by Theorem \ref{thm:3colbdd}. 

		Let $G^*$ be a graph with maximum degree at most $4$. Let $f':E(G^*)\rightarrow [5]$ be an edge-coloring of $G^*$. We construct a labeled graph representation $(G',l)$ of a 3-uniform linear hypergraph $G$ as follows.
		
		We create three sets of vertices $A=\{a_1^1,\dots,a_{10}^1\}$, $B=\{a_1^2,\dots,a_{10}^2\}$ and $C=\{a_1^3,\dots,a_{10}^3\}$. For the vertices $a_1^1,a_1^2,a_1^3$, we create a new copy of $G_2$ as defined in Lemma \ref{Gadget2}, denoted $G^{1}$, with $a_1^1,a_1^2,a_1^3$ as its specified vertices. For every $i \in \{2,\dots,10\}$, we create three new copies of $G_1$ as defined in Lemma \ref{Gadget1}, one with specified vertices $a_i^1,a_1^2,a_1^3$, one with specified vertices $a_1^1,a_i^2,a_1^3$ and one with specified vertices $a_1^1,a_1^2,a_i^3$, respectively. We denote these three hypergraphs $G^{i,1}$, $G^{i,2}$ and $G^{i,3}$ respectively. For convenience, we also define $G^{1,1}= G^{1,2}= G^{1,3}=G^{1}$.
		
		Next, for all $k \in [5]$ and for each edge $e=xy\in E(G^*)$ with $f'(xy)=k$, we create three copies of $K_4$, say $H_1^e, H_2^e, H_3^e$; see Figure \ref{Picture-Edge} for a picture of the construction described below. Let $V(H_i^e)=\{s_i^e, t_i^e, u_i^e, v_i^e\}$ for $i\in [3]$. Let $l(s_i^e t_i^e)=a_{2k-1}^{(i+1)}$, $l(s_i^e u_i^e)=l(t_i^e v_i^e)=a_{2k-1} ^{(i+2)}$, $l(u_i^e v_i^e)=a_{2k}^{(i+1)}$ and $l(s_i^e v_i^e)=l(t_i^e u_i^e)=a_{2k}^{(i+2)}$ for all $i \in [3]$, where superscripts are read modulo 3, so $a^{4}_{j} = a^{1}_{j}$ and $a^{5}_{j} = a^{2}_{j}$ for all $j\in[10]$. We also add edges $xs_i^e$, $yt_i^e$ with $l(xs_i^e)= l(yt_i^e)=a_{2k-1}^i$, and edges $xu_i^e$, $yv_i^e$ with $l(xu_i^e)= l(yv_i^e)=a_{2k}^i$ for all $i \in [3]$.
		
		Let $\mathcal{G} = \{G^1\} \cup  \{G^{i,j}: i\in \{2,\dots,10\}, j\in [3] \}$. Let $$U=(\cup_{G''\in \mathcal{G}} V(G''))\setminus (A\cup B\cup C), W=\cup_{e\in E(G^*)}\cup_{i\in [3]} V(H_i^e).$$ Let $V(G')=A\cup B\cup C \cup U\cup W\cup V(G^*)$ and let $E(G')$ be the set of all labeled edges defined above. By the construction, the function $l$ defined above is a labeling of $G'$. Let $G$ be the corresponding 3-uniform hypergraph of $(G',l)$. 
		
		Notice that from the construction, there is no other edge $e\in E(G)$ with $e\cap U\neq \emptyset$ and $e\cap (W \cup V(G^*))\neq \emptyset$. Furthermore, except for the edge $\{a_1^1, a_1^2, a_1^3\}$, there is no edge $e \in E(G)$ with $e\subseteq A\cup B\cup C\cup V(G^*)$. Moreover, for every edge $e \in E(G) \setminus \{a_1^1, a_1^2, a_1^3\}$, we have $|e \cap (A \cup B \cup C)| \leq 1$. Thus, for each edge $e\in E(G)\setminus \{a_1^1, a_1^2, a_1^3\}$, exactly one of the conditions $|e\cap U| \geq 2$ and $|e\cap (W \cup V(G^*))| = 2$ holds. Moreover, for all $e \in E(G)$, we have that $|e\cap V(G^*)|\leq 1$.
		
		\tbox{\label{eq:lin2}The 3-uniform hypergraph $G$ is linear.}
		
			We take two edges $e, e'\in E(G)$ with $e\neq e'$. Assume for a contradiction that $|e\cap e'|=2$. It follows that $e, e' \neq \{a_1^1, a_1^2, a_1^3\}$, since no edge except $\{a_1^1, a_1^2, a_1^3\}$ contains more than one vertex of $A \cup B \cup C$. 
			
			If $|e\cap U|\geq 2$, then $e \subseteq G^{a,b}$ for some $a\in[10]$ and $b\in[3]$. Since $|e\cap e'|=2$, we have that $e' \cap U \neq \emptyset$, and so $|e' \cap U| \geq 2$. It follows that $e'\subseteq V(G^{c,d})$ for some $c\in [\{2,\dots,10\}]$ and $d\in [3]$. By Lemma \ref{Gadget1}, $(a,b)\neq (c,d)$.  But then $V(G^{a,b}) \cap V(G^{c,d})\subseteq \{a_1^1,a_1^2,a_1^3\}$ and so $e \cap e' \subseteq \{a_1^1,a_1^2,a_1^3\}$. But $|e \cap \{a_1^1,a_1^2,a_1^3\}| \leq 1$, so $|e \cap e'| \leq 1$, which is a contradiction.

			If $|e\cap (W \cup V(G^*))| = 2$, then $|e\cap (A\cup B\cup C)|=1$. Since $|e\cap e'|=2$ and exactly one of $e'\cap U\neq \emptyset$ and $e'\cap (W \cup V(G^*)) \neq \emptyset$ holds, we have $e' \cap (W \cup V(G^*))\neq \emptyset$. It follows that $|e' \cap (W \cup V(G^*))| = 2$. Consider the labeled graph $G'$. Notice that by the construction above, for each $e^*\in E(G^*)$, no two edges of $G'[e^*\cup (\cup _{i\in [3]} V(H_i^{e^*}))]$ with the same label are incident to one common vertex. Thus $e$ and $e'$ are both incident to a common vertex $x\in V(G^*)$. For every $xy_1,xy_2\in E(G^*)$, since $f'$ is an edge coloring of $G^*$, $f'(xy_1)\neq f'(xy_2)$. Thus, for every two edges $e_1,e_2$ of $G'$ incident to $x$, $|e_1\cap e_2|=1$. Hence, we have proved $|e\cap e'|\leq1$, which leads to a contradiction. This proves \eqref{eq:lin2}. 

		\tbox{\label{eq:532}We have $\nu(G)\leq 532$.}

			By Lemmas \ref{Gadget1} and \ref{Gadget2}, for every graph $G'' = G^{i,j}\in \mathcal{G}$, there is a set $S_{G''}$ of size at most 19 which contains $a_i^j$ such that $G'' \setminus S_{G''}$ has no edges; for $G^1$, the set $S_{G^1}$ contains all of $a_1^1, a_1^2, a_1^3$. Each edge which is not a subset of $A\cup B\cup C \cup U$ contains a vertex in $A\cup B\cup C$. Thus, the set $X = \cup_{G''\in \mathcal{G}} S_{G''}$ meets all edges of $G$. Since $\mathcal{G}$ is a set of 28 graphs, it follows that $|X| \leq 19\cdot 28$. So $\nu(G)\leq 19\cdot 28 = 532$. This proves \eqref{eq:532}.

		\tbox{\label{eq:3col} The graph $G^*$ is 3-colorable if and only if $G$ is 3-colorable.}
		
			Let $c'$ be a 3-coloring of $G$. By Lemma \ref{Gadget2}, $c'(a_1^1)$, $c'(a_1^2)$ and $c'(a_1^3)$ are all distinct. Without loss of generality let $c'(a_1^1)=1$, $c'(a_1^2)=2$ and $c'(a_1^3)=3$. From the construction, by Lemma \ref{Gadget1}, $c'(a_i^1)=1$, $c'(a_i^2)=2$ and $c'(a_i^3)=3$ for all $i\in [10]$. We want to prove that $c'|_{V(G^*)}$ is a 3-coloring of $G^*$.
			
			Suppose for a contradiction that there exists an edge $xy\in E(G^*)$ with $c'(x)=c'(y)$. Let $k = f'(xy)$. Without loss of generality, let $c'(x)=c'(y)=1$. Then consider the graph $H_1^{xy}$. Because of the edges $\{x,s_1^{xy}, a_{2k-1}^1\}$, $\{x,u_1^{xy}, a_{2k}^1\}$, $\{y,t_1^{xy}, a_{2k-1}^1\}$ and 
			$\{y,v_1^{xy}, a_{2k}^1\}$, all of the vertices $s_1^{xy}, t_1^{xy}, u_1^{xy}, v_1^{xy}$ are colored 2 or 3. Since $c'(a_{2k-1}^3)=3$, from the edge $\{s_1^{xy}, u_1^{xy}, a_{2k-1}^3\}$, it follows that one of the vertices $s_1^{xy}, u_1^{xy}$ is not colored 3. Without loss of generality let $c'(s_1^{xy})=2$. Because of the edge $\{s_1^{xy}, t_1^{xy}, a_{2k-1}^2\}$, we have $c'(t_1^{xy})=3$. Consider the edges $\{t_1^{xy}, u_1^{xy}, a_{2k}^3\}$ and $\{t_1^{xy}, v_1^{xy}, a_{2k-1}^3\}$. Since $c'(a_{2k}^3)= c'(a_{2k-1}^3)= 3$, we have $c'(u_1^{xy})= c'(v_1^{xy})= 2$. But then the edge $\{u_1^{xy}, v_1^{xy}, a_{2k}^2\} $ is monochromatic, which contradicts the fact that $c'$ is a 3-coloring of $G$. This proves that if $G$ is $3$-colorable, then so is $G^*$.
			
			For the converse direction, let $c$ be a 3-coloring of $G^*$. We want to define a 3-coloring $d$ of $G$. 
			Let $d(v)=1$ for all $v\in A$, $d(v)=2$ for all $v\in B$, and $d(v)=3$ for all $v\in C$. By Lemmas \ref{Gadget1} and \ref{Gadget2}, there is a way to extend $d$ to $G[A \cup B \cup C \cup U]$.
			
			Let $d(v)=c(v)$ for all $v\in V(G^*)$. For each edge $xy\in E(G^*)$ and each $i\in [3]$, since $c$ is a 3-coloring of $G^*$, one of the vertices $x,y$ is not colored $i$. If $c(x)\neq i$, then for the set $V(H_i^{xy})$, we set $d(s_i^{xy})=  d(u_i^{xy})=i$ and $d(t_i^{xy})=  d(v_i^{xy})=i+1$, reading colors modulo 3 (so if this would assign color 4, we assign color 1 instead). If $c(x) = i$, then $c(y)\neq i$, and for the set $V(H_i^{xy})$, we set $d(s_i^{xy})=  d(u_i^{xy})=i+1$, again reading colors modulo 3; and $d(t_i^{xy})=  d(v_i^{xy})=i$. Thus, we have defined the function $d$ for all vertices of $G$. 
			
			We then want to show that $d$ is a 3-coloring of $G$. From the construction, all edges $e$ with $e\cap U \neq \emptyset$ are contained in $G[A \cup B \cup C \cup U]$ and hence not monochromatic. It remains to consider edges $e\in E(G)$ with $e\cap W \neq \emptyset$. It follows that there is an edge $xy \in E(G^*)$ and $i \in [3]$ such that $\emptyset \neq e \cap V(H_i^{xy}) = e \cap W$. If $x\in e$, then either $s_i^{xy} \in e$ or $t_i^{xy} \in e$ and from the construction of $d$, we have that $d(e \cap (A \cup B \cup C)) = \{i\}$, and either $d(x) \neq i$ or $d(s_i^{xy}), d(t_i^{xy}) \neq i$. The case $y \in e$ follows analogously. Therefore, we may assume that $|e \cap V(H_i^{xy})| = 2$. Now either the two vertices in $e \cap V(H_i^{xy})$ receive different colors, or they receive the same color in $\{i, i+1\}$ and $d(e \cap (A \cup B \cup C)) = \{i+2\}$. Thus, the edge $e$ is not monochromatic. This proves \eqref{eq:3col}. 
			
		\tbox{The 3-hypergraph $G$ can be constructed from $G^*$ in time $O(n^3)$, where $n=|V(G^*)|$.}
		
			Since $|V(G_1)|=O(1)$ and $|E(G_1)|=O(1)$, the 3-uniform hypergraph $G_1$ can be constructed in time $O(1)$. Similarly, the 3-uniform hypergraph $G_2$ can be constructed in time $O(1)$. We create $3\cdot(10)-2=28$ copies of the gadgets $G_1$ or $G_2$. This step can be done in time $O(1)$.
			
			Let $n=|V(G^*)|$, and $m=|E(G^*)|$. The edge coloring $f'$ of $G^*$ can be computed in time $O(mn)\leq O(n^3)$ by Theorem \ref{thm:vizing}. For each edge $e\in E(G^*)$, we create 12 new vertices and 30 edges. Thus, constructing the vertex set $W$ and all edges incident to $W$ takes time $O(n^2)$.
			
	\end{proof}

	\section*{Acknowledgments}
	We acknowledge the support of the Natural Sciences and Engineering Research Council of Canada (NSERC), [funding
reference number RGPIN-2020-03912]. \newline Cette recherche a \'et\'e financ\'ee par le Conseil de recherches en sciences naturelles et en g\'enie du Canada (CRSNG),
[num\'ero de r\'ef\'erence RGPIN-2020-03912].

	\bibliographystyle{abbrv}
\bibliography{reference}
\end{document}